\def\namedlabel#1#2{\begingroup
    \def\@currentlabel{\protect{\textnormal{#2}}}%
    \label{#1}\endgroup
}
\newcommand{\Cfrob}[1]{C_{#1}^{(F)}}
\newcommand{\Csimp}[1]{C_{#1}^{(S)}}
\newcommand{\Cmult}[1]{C_{#1}^{(M)}}
\newcommand{\SimpConst}[1]{S({#1})} %
\newcommand{\MultConst}[1]{M({#1})} %
\newcommand{\dif}[1]{{{#1}'}}
\newcommand{\addpol}{A}
\newcommand{\subadd}{S}
\newcommand{\Oring}{\mathcal{O}}
\newcommand{\prip}{\mathfrak{p}}
\newcommand{\priP}{\mathfrak{q}}
\newcommand{\priq}{\mathfrak{p^*}}
\newcommand{\et}{e^*}
\newcommand{\ff}{F}
\newcommand{\kk}{K}
\newcommand{\ii}{i}
\newcommand{\jj}{j}
\newcommand{\alt}[1]{#1_{0}}
\newcommand{\qepol}{H}
\newcommand{\divfct}{\tau} %
\newcommand{\MM}{\mathsf{M}}
\DeclareMathOperator{\mult}{mult}
\newcommand{\fnull}{f_{0}}
\newcommand{\feins}{f_{1}}
\newcommand{\fzwei}{f_{2}}
\newcommand{\fdrei}{f_{3}}
\newcommand{\Van}[2]{#1^{-1}(#2)}   %
\def\blfootnote{\gdef\@thefnmark{}\@footnotetext}
\begin{document}

\title{Compositions and collisions at degree $p^{2}$}
\pdftitle{Compositions and collisions at degree p^2}
\author{
Raoul Blankertz, Joachim von~zur~Gathen \& Konstantin Ziegler\\
B-IT, Universit\"at Bonn\\
D-53113 Bonn, Germany\\
\email{blankertz@uni-bonn.de, {gathen,zieglerk}@bit.uni-bonn.de}\\
\url{http://cosec.bit.uni-bonn.de/}
}
\pdfauthor{Raoul Blankertz, Joachim von zur Gathen, Konstantin Ziegler}
\maketitle

\begin{abstract}
A univariate polynomial $f$ over a field is decomposable if $f= g
\circ h= g(h)$ for nonlinear polynomials $g$ and $h$.  In order to
count the decomposables, one wants to know, under a suitable normalization, the number of equal-degree collisions of the form $f = g \circ h = g^* \circ h^*$ with $(g,h) \neq (g^{*}, h^{*})$ and $\deg g = \deg g^*$.  Such collisions only occur in the wild case, where the field characteristic $p$ divides $\deg f$.  Reasonable bounds on the number of decomposables over a finite field are known, but they are less sharp in the wild case, in particular for degree $p^2$.

We provide a classification of all polynomials of degree $p^2$ with a
collision.  It yields the exact number of decomposable polynomials of
degree $p^{2}$ over a finite field of characteristic $p$.  We also
present an efficient algorithm that determines whether a given polynomial of degree $p^{2}$ has a collision or not.
\end{abstract}

\begin{keywords}
computer algebra, finite fields, wild polynomial decomposition, equal-degree
collisions, ramification theory of function fields, counting special polynomials
\end{keywords}

\section{Introduction}

The \emph{composition} of two polynomials $g,h \in F[x]$ over a field
$F$ is denoted as $f= g \circ h= g(h)$, and then $(g,h)$ is a
\emph{decomposition} of $f$, and $f$ is \emph{decomposable} if $g$ and
$h$ have degree at least $2$.  In the 1920s, Ritt, Fatou, and Julia
studied structural properties of these decompositions over
$\mathbb{C}$, using analytic methods. Particularly important are two
theorems by Ritt on the uniqueness, in a suitable sense, of
decompositions, the first one for (many) indecomposable components and
the second one for two components, as above.  \cite{eng41} and
\cite{lev42} proved them over arbitrary fields of characteristic zero
using algebraic methods.

The theory was extended to arbitrary characteristic by \cite{frimac69}, \cite{dorwha74}, \cite{sch82c,
  sch00c}, \cite{zan93}, and others. Its use in a cryptographic
context was suggested by \cite{cad85}. In computer algebra, the method
of \cite{barzip85} requires exponential time.  A fundamental
dichotomy is between the \emph{tame case}, where the characteristic
$p$ does not divide $\deg g$, and the \emph{wild case}, where $p$
divides $\deg g$, see \cite{gat90d,gat90c}.  A breakthrough result of \cite{kozlan89} was their
polynomial-time algorithm to compute tame decompositions. In the wild case, considerably less is known, both
mathematically and computationally. \cite{zip91} suggests that the
block decompositions of \cite{lanmil85} for determining subfields of
algebraic number fields can be applied to decomposing rational
functions even in the wild case.  A version of Zippel's algorithm in
\cite{bla13} computes in polynomial time all decompositions of a polynomial that are
minimal in a certain sense.  \cite{avazan03} study ambiguities in the
decomposition of rational functions over $\mathbb{C}$.  A set of distinct decompositions of $f$
is called a \emph{collision}. The number of decomposable polynomials of degree $n$ is thus
the number of all pairs $(g,h)$ with $\deg g \cdot \deg h = n$ reduced
by the ambiguities introduced by collisions.  In this paper, we study
only \emph{equal-degree} collisions of $f = g \circ h = g^{*} \circ
h^{*}$, where $\deg g = \deg g^{*}$ and thus $\deg h = \deg h^{*}$.

The task of counting compositions over a finite field of
characteristic $p$ was first considered in
\cite{gie88b}.
\Citet{gat09b} presents general approximations to the
number of decomposable polynomials. These come with satisfactory
(rapidly decreasing) relative error bounds except when $p$ divides $n
= \deg f$ exactly twice.
The main result (\autoref{cor:main}) of the present work determines exactly the number of decomposable polynomials in one of these difficult cases, namely when $n = p^{2}$ and hence $\deg g =\deg h = p$.

This is shown in three steps. First, we exhibit some classes of collisions in \autoref{sec:explicit-construction}. Their properties are easy to check.
In the second step we show that these are all possibilities
(\autoref{thm:normal}).   In Section~\ref{sec:algebra} we use
ramification theory of function fields to study the root
multiplicities in collisions, and in \autoref{sec:classification}
classify all collisions at degree $p^{2}$.
In the third step we count the resulting possibilities (\autoref{sec:Counting}).

Our contribution is fourfold:
\begin{itemize}
\item We provide explicit constructions for collisions at degree
  $r^{2}$, where $r$ is a power of the characteristic $p >
  0$ (\autoref{thm:nonadd}, \autoref{thm:constmulti}).
\item We provide a classification of all collisions at degree $p^{2}$,
  linking every collision to a unique explicit construction (\autoref{thm:normal}).
\item We use these two results to obtain an exact formula for the
  number of decomposable polynomials at degree $p^{2}$ (\autoref{cor:main}).
\item The classification yields an efficient algorithm to test whether
  a given polynomial of degree $p^{2}$ has a collision or not (\autoref{algo:coll-det}).
\end{itemize}

An Extended Abstract of this paper appeared as \citeauthor*{blagat13} (2012).
Notice: this is the authors' version of a work that was accepted for
publication in \emph{Journal of Symbolic Computation}. Changes
resulting from the publishing process, such as peer review, editing,
corrections, structural formatting, and other quality control
mechanisms may not be reflected in this document. Changes have been
made to this work since it was submitted for publication. A definitive
version was subsequently published as \cite{blagat13}.

\section{Definitions and examples}
\label{sec:prelim}
We consider
a field $F$ of positive characteristic $p > 0$.
Composition of $g$ and $h$ with linear polynomials introduces
inessential ambiguities in decompositions $f=g\circ h$.  To avoid them, we normalize $f$, $g$, and
$h$ to be \emph{monic original}, that is with leading coefficient 1
and constant coefficient 0 (so that the graph of $f$ passes through
the origin); see
\citet{gat09b}.

For a nonnegative integer $k$, an (equal-degree) \emph{$k$-collision} at degree $n$ is a set of
  $k$ distinct pairs $(g,h)$ of monic original polynomials in $\ff[x]$
  of degree at least $2$, all with the same
  composition $f = g \circ h$ of degree $n$ and $\deg g$ the same for
  all $(g,h)$.  A $k$-collision is called \emph{maximal} if it is not
  contained in a $(k+1)$-collision.  We also say that $f$ has a (maximal)
  $k$-collision.  Furthermore, $g$ is a \emph{left component} and $h$ a \emph{right component} of $f$.
For $n\geq 1$, we define
  \begin{align}
    P_{n}(\ff) & = \{ f \in \ff[x] \colon \text{$f$ is monic original of degree $n$}\}, \label{eq:45} \\
    D_{n}(\ff) & = \{ f \in P_{n}(\ff) \colon \text{$f$ is decomposable} \}, \label{eq:46} \\
    C_{n,k}(\ff) & = \{ f \in P_{n}(\ff) \colon \text{$f$ has a maximal $k$-collision} \}. \label{eq:47}
  \end{align}
Thus $\# P_n(\Fq) = q^{n-1}$. We sometimes leave out $\ff$ from the notation when it is clear from the context.

Let $f \in P_n$ have a $k$-collision $C$, $f' \neq 0$, and $m$ be a
divisor of $n$. If all right components in $C$ are of degree $m$ and
indecomposable, then $k \leq (n-1)/(m - 1)$; see \citet[Corollary
3.27]{bla11}. For $n = p^2$, both components are
of degree $p$ and thus indecomposable and we find $k \leq p+1$; see also \citet*[Proposition 6.5~(iv)]{gatgie10a}.
For counting all decomposable polynomials of degree $p^2$ over $\Fq$,
it is sufficient to count the sets $C_{p^2, k}$ of polynomials with maximal $k$-collision for $k \geq 2$, since
\begin{equation}
\label{eq:missing1}
\# D_{p^2} = q^{2p-2} - \sum_{k \geq 2} (k-1) \cdot \# C_{p^2,k}.
\end{equation}

\begin{lemma}
\label{lem:gunique}
In a decomposition $(g,h)$, $g$ is uniquely determined by $g\circ h$ and $h$.
\end{lemma}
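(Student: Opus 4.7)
The plan is to show that the substitution map $g \mapsto g(h)$ from $F[x]$ to $F[x]$ is injective whenever $h$ is nonconstant; applying this to the difference of two candidate left components then forces them to agree.

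Concretely, suppose $g, g^* \in F[x]$ satisfy $g\circ h = g^* \circ h$. Set $\Delta = g - g^* \in F[x]$. Then $\Delta(h) = g(h) - g^*(h) = 0$. I want to conclude $\Delta = 0$. For this, observe that if $\Delta \ne 0$ and $\Delta = \sum_{i=0}^{d} \Delta_i x^i$ with $\Delta_d \ne 0$, then
\begin{equation*}
  \Delta(h) = \Delta_d\, h^d + \text{(terms of strictly smaller degree)},
\end{equation*}
and since $\deg h \geq 1$ (as $h$ is nonlinear, hence certainly nonconstant), the leading term $\Delta_d h^d$ has degree $d \cdot \deg h \geq 0$ and cannot be cancelled by the lower terms, which have degree at most $(d-1)\deg h < d\deg h$. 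Hence $\Delta(h) \ne 0$, contradicting $\Delta(h) = 0$. Therefore $\Delta = 0$ and $g = g^*$.

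There is really no obstacle here: the only thing to notice is that $h$ being nonconstant (which is built into the definition of decomposition, where right components are nonlinear) makes the substitution homomorphism $F[x] \to F[x]$, $g \mapsto g(h)$, degree-multiplicative and hence injective, so that $f$ together with $h$ pins down $g$ uniquely. No use of the hypothesis $f' \ne 0$, of finiteness of $F$, or of the characteristic is needed; the statement holds over any field (indeed any integral domain).
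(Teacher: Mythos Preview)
Your proof is correct and essentially the same as the paper's: both rely on the injectivity of the substitution map $g \mapsto g(h)$ when $h$ is nonconstant. The paper phrases this as the triviality of the kernel of the $F$-algebra homomorphism $x \mapsto h$, while you spell out the degree argument that establishes this triviality.
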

\begin{proof}
Let $f=g\circ h$. Consider the $F$-algebra homomorphism $\varphi \colon \ff[x] \rightarrow \ff[x]$ with $x \mapsto h$. Its kernel is trivial, since $h$ is nonconstant, and thus $\varphi$ is injective. Hence there is exactly one $u \in \ff[x]$ such that $\varphi (u) = f$, namely $u = g$.
\end{proof}

Furthermore, $g$ is easy to compute from  $g\circ h$ and $h$ by the generalized Taylor expansion; see \citet[Section 2]{gat90c}.
The following is a simple example of a collision.
\begin{example}
\label{exa:frob}
Let $r = p^e$. For $h \in P_r(\ff)$, we have
\begin{equation}
\label{eq:17}
x^{r} \circ h = \varphi_{r} (h) \circ x^{r},
\end{equation}
where $\varphi_{r}$ is the $e$th power of the \emph{Frobenius
  endomorphism} on $\ff$, extended to polynomials coefficientwise. If
$h \neq x^r$, then $\{ (x^{r} ,h), ( \varphi_{r} (h) , x^{r}) \}$ is a
2-collision and we call it a \emph{Frobenius collision}.
\end{example}

In the case $r = p$, we have the following description.
\begin{lemma}
\label{lem:cor:frob}
\begin{ronumerate}
\item\label{lem:frob} Assume that $f \in P_{p^2}(F)$ has a 2-collision. Then it is a Frobenius collision if and only if $\dif{f} = 0$.
\item\label{cor:frob} A Frobenius collision of degree $p^2$ is a maximal 2-collision.
\end{ronumerate}
\end{lemma}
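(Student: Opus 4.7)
\emph{Proof proposal.} The plan rests on two small observations in characteristic $p$. First, a monic original polynomial in $P_p(F)$ with zero derivative must equal $x^p$: vanishing derivative forces every exponent to be a multiple of $p$, and the only option in degree $p$ with no constant term is $x^p$. Second, by the chain rule $f' = g'(h)\cdot h'$, and since $h$ is nonconstant, $g'(h)=0$ in $F[x]$ forces $g'=0$ (as $g'$ would otherwise have infinitely many roots in $\overline F$). Together these imply that if $f'=0$ then in any decomposition $(g,h)$ of $f$ either $g = x^p$ or $h = x^p$.

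For \ref{lem:frob}, the forward direction is immediate: a Frobenius collision has $f = x^p\circ h = h^p$, whose derivative is $0$. For the converse, assume $f$ has a 2-collision $\{(g,h),(g^*,h^*)\}$ with $f'=0$. By the observations, each of the two pairs contains $x^p$ as left or right component. A short case analysis rules out ``same-slot'' coincidences: if $g=g^*=x^p$, then $h^p=(h^*)^p$, so $(h-h^*)^p=0$ in the domain $F[x]$ and hence $h=h^*$; if $h=h^*=x^p$, then \autoref{lem:gunique} gives $g=g^*$. Either conclusion contradicts distinctness of the pairs. Up to swapping the two pairs, the remaining possibility is $h=x^p$ and $g^*=x^p$. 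Then $f = g(x^p) = (h^*)^p$, and writing $h^* = \sum c_i x^i$ gives $(h^*)^p = \sum c_i^p x^{ip} = \varphi_p(h^*)(x^p)$, so \autoref{lem:gunique} yields $g = \varphi_p(h^*)$, identifying the collision with the Frobenius collision of \autoref{exa:frob}.

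For \ref{cor:frob}, apply the same observations to any third pair $(g',h')$ with $g'\circ h' = h^p$: either $g'=x^p$, and then $(h')^p = h^p$ yields $h'=h$ (via $(h'-h)^p=0$), or $h'=x^p$, and then $g'(x^p) = \varphi_p(h)(x^p)$ forces $g' = \varphi_p(h)$ by \autoref{lem:gunique}. In both cases $(g',h')$ coincides with one of $(x^p,h)$ and $(\varphi_p(h),x^p)$, so the Frobenius 2-collision admits no extension, i.e.\ is maximal. The only delicate point I expect is the chain-rule step that $g'(h)=0$ forces $g'=0$; handling the two orderings of the pairs in the case analysis is routine bookkeeping.
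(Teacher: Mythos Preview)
Your proof is correct and follows essentially the same approach as the paper: both use the chain rule $f' = g'(h)\cdot h'$ together with the fact that the only polynomial in $P_p(F)$ with vanishing derivative is $x^p$, and both invoke \autoref{lem:gunique} for uniqueness. The paper organizes the converse of \ref{lem:frob} slightly more economically by first observing $f\in F[x^p]$ gives a canonical decomposition $(g,x^p)$ and then showing any \emph{other} decomposition must have left component $x^p$; this sidesteps your ``same-slot'' case analysis, but the content is the same. One cosmetic remark: in part \ref{cor:frob} you write $(g',h')$ for the hypothetical third decomposition, which clashes with the derivative notation used throughout; renaming it would avoid confusion.
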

\begin{proof}
\ref{lem:frob} If $f$ is a Frobenius collision, then $f'=0$ by
definition.  Conversely, let $f \in P_{p^2}(\ff)$ with $\dif{f} = 0$. Then $f \in \ff[x^p]$ and
thus $f = g \circ x^p$ for some monic original polynomial $g$.  Let $f
= g^* \circ h^*$  be another decomposition of $f$. By
\autoref{lem:gunique}, $f$ and $h^{*}$ determine $g^{*}$ uniquely,
hence $h^* \neq x^p$ and $\dif{h^*} \neq 0$. Thus from $\dif{f} =
\dif{g^*} (h^*) \cdot \dif{h^*} = 0$ follows $\dif{g^*} = 0$ and hence
$g^* = x^p$. Furthermore, $f = x^p \circ h^* = \varphi_{p} (h^*) \circ x^{p}$ by
\eqref{eq:17}, $g = \varphi_{p}(h^{*})$ by
the uniqueness in \autoref{lem:gunique}, and $f$ is a Frobenius collision.

\ref{cor:frob} Let $f = x^{p} \circ h = \varphi_{p} (h) \circ x^{p}$, with $h \neq
x^p$, be a Frobenius collision, and $(g^*, h^*)$ a decomposition of $f$. Then $0 = \dif{f} =
\dif{g^*} (h^*) \cdot \dif{h^*}$ and thus $\dif{g^*} = 0$ or
$\dif{h^*} = 0$. If $\dif{h^*} = 0$, then $h^* = x^p$ and thus $g^* =
\varphi_{p} (h)$, by \autoref{lem:gunique}. If $\dif{g^*} = 0$, then
$g^* = x^p$ and $f =  \varphi_p (h^*) \circ x^p$ as in \ref{lem:frob}. Thus $\varphi_p (h^*) = \varphi_{p} (h)$ by the
uniqueness in \autoref{lem:gunique}, which implies $h = h^*$.
\end{proof}

If $\ff$ is perfect---in particular if $F$ is finite or algebraically closed---then the Frobenius endomorphism
$\varphi_p$ is an automorphism on $\ff$. Thus for $f \in
P_{p^2}(\Fq)$, $\dif{f} = 0$ implies that $f$ is either a
Frobenius collision or $x^{p^2}$.

For $f \in P_{n} (\ff)$ and $w \in \ff$, the \emph{original shift} of $f$ by $w$ is
\begin{equation}
  \label{eq:14}
  f^{[w]} = (x-f(w)) \circ f \circ (x+w) \in P_{n}(\ff).
\end{equation}
We also simply speak of a \emph{shift}. Original shifting defines a group action of the additive group of $\ff$ on $P_{n}(\ff)$. Indeed, we have for $w, w' \in \ff$
\begin{align*}
  (f^{[w]})^{[w']} & = (x-f^{[w]}(w')) \circ f^{[w]} \circ (x+w') \\
& = (x - (f(w'+w)-f(w))) \circ (x -
  f(w)) \circ f \circ (x+w) \circ (x+w') \\
& =(x - f(w'+w)) \circ f \circ (x+w'+w)  = f^{[w'+w]}.
\end{align*}
Furthermore, for the derivative we have $(f^{[w]})' = f' \circ (x+w)$.  Shifting respects decompositions in the sense that
for each decomposition $(g,h)$ of $f$ we have a decomposition $(g^{[h(w)]}, h^{[w]})$ of $f^{[w]}$, and vice versa.
We denote $(g^{[h(w)]}, h^{[w]})$ as $(g, h)^{[w]}$.

\section{Explicit collisions at degree {$r^{2}$}}
\label{sec:explicit-construction}

This section presents two classes of explicit collisions at degree $r^{2}$, where $r$
is a power of the characteristic $p>0$ of the field $F$.
The collisions of \autoref{thm:nonadd} consist of additive and subadditive polynomials.
A polynomial $\addpol$ of degree $r^{\kappa}$
is \emph{$r$-additive} if it is of the form $\addpol = \sum_{0 \leq i
  \leq \kappa} a_i x^{r^i}$ with all $a_i \in \ff$. We call a
polynomial \emph{additive} if it is $p$-additive. A polynomial is
additive if and only if it acts additively on an algebraic closure
$\overline{F}$ of $F$, that is $\addpol(a + b) = \addpol(a)+
\addpol(b)$ for all $a$, $b \in \overline{F}$; see \citet[Corollary
1.1.6]{gos96}. The composition of additive polynomials is additive,
see for instance Proposition 1.1.2 of the cited book.  The
decomposition structure of additive polynomials was first studied by
\cite{ore33b}.  \citet[Theorem~4]{dorwha74} show that all components
of an additive polynomial are additive.  \cite{gie88b} gives lower bounds
on the number of decompositions and algorithms to determine them.

For a divisor $m$ of $r -1$, the \emph{$(r,m)$-subadditive} polynomial
associated with the $r$-additive polynomial $\addpol$ is $\subadd =
x(\sum_{0 \leq i \leq \kappa} a_i x^{(r^i - 1)/m})^m$ of degree
$r^{\kappa}$.  Then $\addpol$ and $\subadd$ are related as $x^m \circ
\addpol = \subadd \circ x^m$ and fall into the First Case of Ritt's
Second Theorem.
\citet{dic97} notes a special case of subadditive polynomials, and
\cite{coh85} is concerned with the reducibility of some related
polynomials.  \cite{coh90b, coh90c} investigates their connection to
exceptional polynomials and coins the term ``sub-linearized''; see
also \cite{cohmat94}.  \citet*{couhav04} derive the number of
indecomposable subadditive polynomials and present an algorithm to
decompose subadditive polynomials.

 \citet[Theorem 3]{ore33b} describes exactly the right components of
 degree $p$ of an additive polynomial.  \cite{henmat99} relate such
 additive decompositions to subadditive polynomials, and in their
 Theorems~3.4 and 3.8 describe the collisions of \autoref{thm:nonadd}
 below.  The polynomials of \autoref{thm:constmulti} popped up in the
 course of trying to prove that these examples might be the only ones;
 see the proof of \autoref{thm:normal}.  In
 \autoref{sec:classification}, we show that together with the
 Frobenius collisions of \autoref{exa:frob}, these two examples and their
 shifts comprise all 2-collisions at degree $p^{2}$.

\begin{fact}
\label{thm:nonadd}
Let $r$ be a power of $p$, $u, s\in \ff^{\times}$, $\varepsilon \in \{0,1\}$, $m$ a positive divisor of $r-1$, $\ell = (r-1)/m$, and
\begin{equation}
 \label{eq:7}
\begin{split}
  f &= \SimpConst{u,s,\varepsilon,m} = x (x^{\ell(r+1)}-\varepsilon u
  s^{r}x^{\ell} + us^{r+1})^{m} \in P_{r^{2}}(\ff), \\
 T &= \{t \in \ff \colon t^{r+1} -\varepsilon ut + u = 0\}.
  \end{split}
\end{equation}
For each $t \in T$ and
\begin{equation}
\label{eq:80}
\begin{split}
g   & = x (x^{\ell}-us^{r}t^{-1})^{m}, \\
h   & = x (x^{\ell}-st)^{m},
\end{split}
\end{equation}
both in $P_{r}(\ff)$, we have $f = g \circ h$.
Moreover, $f$ has a $\# T$-collision.
\end{fact}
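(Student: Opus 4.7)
The plan is to verify directly that $g \circ h = f$ for each $t \in T$ and then observe that distinct values of $t$ produce distinct right components, yielding $\#T$ distinct decomposition pairs.

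For the first part, I would substitute $h$ into $g$ and simplify $g \circ h = h \cdot (h^\ell - u s^r t^{-1})^m$. The only nontrivial input is computing $h^\ell = x^\ell (x^\ell - st)^{r-1}$. Using the Frobenius identity $(x^\ell - st)^r = x^{\ell r} - s^r t^r$ valid in characteristic $p$, this rewrites as the quotient $(x^{\ell(r+1)} - s^r t^r x^\ell)/(x^\ell - st)$. Combining $h^\ell - u s^r t^{-1}$ over the common denominator $x^\ell - st$ produces a numerator of $x^{\ell(r+1)} - s^r(t^r + u/t) x^\ell + u s^{r+1}$.

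The key algebraic step is to recognize the defining relation of $T$ inside this numerator. Since $u \in F^\times$, the equation $t^{r+1} - \varepsilon u t + u = 0$ forces $t \neq 0$, and dividing by $t$ gives exactly $t^r + u/t = \varepsilon u$. Substituting turns the numerator into $x^{\ell(r+1)} - \varepsilon u s^r x^\ell + u s^{r+1}$, so that $(h^\ell - u s^r t^{-1})^m$ equals this polynomial to the $m$th power, divided by $(x^\ell - st)^m$. Multiplying by $h = x(x^\ell - st)^m$ cancels the denominator and recovers $f$ via the defining formula in \eqref{eq:7}.

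For the $\#T$-collision claim, the map $t \mapsto h = x(x^\ell - st)^m$ is injective on $T$ because $s \in F^\times$, so different $t \in T$ give different right components, hence distinct pairs $(g, h)$; all have common left degree $r$ and all components are monic original and nonlinear. The main obstacle is the Frobenius simplification in the first step; once the identity $t^r + u/t = \varepsilon u$ is isolated as the bridge between the equation defining $T$ and the explicit form of $f$, the rest is routine bookkeeping. A conceptual alternative would be to first decompose the associated $r$-additive polynomial $A$ satisfying $x^m \circ A = f \circ x^m$ and transport the decomposition via the subadditive correspondence, but the direct computation above is shorter.
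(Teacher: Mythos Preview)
Your proof is correct and follows essentially the same route as the paper. The paper's computation keeps everything polynomial by absorbing one factor $(x^{\ell}-st)$ into the bracket at the first step, writing $g\circ h = x\bigl(x^{\ell}(x^{\ell}-st)^{r}-(x^{\ell}-st)us^{r}t^{-1}\bigr)^{m}$, whereas you phrase the same cancellation as a rational-function quotient; after the Frobenius expansion and the substitution $t^{r}+u/t=\varepsilon u$ the two calculations are line-for-line identical. For injectivity the paper singles out the coefficient $h_{r-\ell}=-mst$, which is just a concrete witness for your observation that $t\mapsto x(x^{\ell}-st)^{m}$ is injective when $s\neq 0$.
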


The polynomials $f$ in
\eqref{eq:7} are ``simply original'' in the sense that they
have a simple root at $0$.  This motivates the designation $S$.
\begin{proof}
For $t \in T$, we have
  \begin{align}
    g \circ h & = x (x^{\ell} -st)^{m}(x^{\ell}(x^{\ell}-st)^{r-1}-us^{r}t^{-1})^{m} \\
& = x (x^{\ell}(x^{\ell}-st)^{r}-(x^{\ell}-st)us^{r}t^{-1})^{m} \\
& = x (x^{\ell r + \ell} - s^{r} t^{r} x^{\ell} - u s^{r} t^{-1}x^{\ell} + u s^{r+1})^{m} \\
& = x(x^{\ell(r+1)} -s^{r}(t^{r}+ut^{-1})x^{\ell}+us^{r+1})^{m} \\
& = x (x^{\ell(r+1)}-\varepsilon u s^{r}x^{\ell} + us^{r+1})^{m} =f.
  \end{align}
This proves that $(g,h)$ is a decomposition of $f$.  While $f$ does
not depend on $t$, the $\#T$ different choices for $t$ yield $\# T$
pairwise different values for the coefficients of $x^{r-\ell}$ in $h$, namely
\begin{equation}
  h_{r-\ell} = -mst \neq 0. \tag*{\qedhere}
\end{equation}
\end{proof}

The polynomial $\SimpConst{u,s,\varepsilon,m}$ is $r$-additive for $m
= 1$ and $(r, m)$-subadditive for all $m$.  \cite{blu04a} shows
that for $\varepsilon = 1$ and $F \cap \Fr$ of size $Q$, the cardinality of $T$ is either $0$,
$1$, $2$, or $Q+1$.
This also holds for $\varepsilon = 0$.  In either case, $T$ is
independent of $m$ and $\ell$.  If $T$ is empty, then
$\SimpConst{u,s,\varepsilon,m}$ has no decomposition of the form
\eqref{eq:80}, but $r+1$ such decompositions exist over the
splitting field of the squarefree polynomial $y^{r+1} -\varepsilon uy
+ u \in F[y]$.

For a polynomial $f \in P_n(F)$ and an integer $i$, we
denote the coefficient of $x^{i}$ in $f$ by $f_i$, so that $f = x^n +
\sum_{1 \leq i < n} f_i x^i$ with $f_{i} \in F$.  The \emph{second degree} of $f$ is
\begin{equation}
\label{eq:secdeg}
\deg_{2} f = \deg(f-x^{n}).
\end{equation}
 If $p \mid n$ and $p \nmid \deg_{2} f$, then $\deg_{2} f = \deg (f') + 1$.

\begin{fact}[\cite*{gatgie10a}, Proposition 6.2]
\label{lem:unique1}
 Let $r$ be a power of $p$, and $u$, $s$, $\varepsilon$, $m$ and $u^{*}$, $s^{*}$, $\varepsilon^{*}$, $m^{*}$ satisfy the conditions
 of \autoref{thm:nonadd}.  For $f = \SimpConst{u, s, \varepsilon, m}$
 and $f^{*} =  \SimpConst{u^{*}, s^{*}, \varepsilon^{*}, m^{*}}$, the following hold.
\begin{ronumerate}
\item\label{item:ii} For $\varepsilon = 1$, we have $f=f^{*}$ if and only if
     $(u, s, \varepsilon, m) = (u^{*}, s^{*}, \varepsilon^{*}, m^{*})$.
\item\label{item:iv} For $\varepsilon = 0$, we have $f = f^{*}$ if and
  only if $(us^{r+1}, \varepsilon, m) = (u^{*}(s^{*})^{r+1},
  \varepsilon^{*}, m^{*})$.
\item\label{item:4} The stabilizer of $f$ under original shifting is
  $F$ if $m=1$, and $\{0\}$ otherwise.  For $F = \Fq$, the orbit of $f$ under
  original shifting has size $1$ if $m=1$, and size $q$ otherwise.
\item\label{item:5} The only polynomial of the form \eqref{eq:7} in the
  orbit of $f$ under original shifting is $f$ itself.
\end{ronumerate}
\end{fact}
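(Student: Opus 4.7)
The plan is to split into two groups. For parts~\ref{item:iv} and~\ref{item:ii}, I would proceed by direct coefficient inspection in the expansion $f = x q^{m}$ with $q = x^{\ell(r+1)} - \varepsilon us^{r} x^{\ell} + us^{r+1}$. A short binomial (resp.\ trinomial) calculation gives $\deg_{2} f = r^{2} - \ell(r+1)$ when $\varepsilon = 0$ and $\deg_{2} f = r^{2} - \ell r$ when $\varepsilon = 1$; since $r \equiv 0$ and $\ell \not\equiv 0 \pmod p$ (as $\ell \mid r-1$), the former is not divisible by $p$ while the latter is, so $\varepsilon$ is detected by this divisibility, and $\deg_{2} f$ then pins down $m$. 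For part~\ref{item:iv}, the coefficient of $x^{1+(m-1)\ell(r+1)}$ equals $m \cdot us^{r+1}$ with $m$ invertible mod~$p$, recovering $us^{r+1}$; for part~\ref{item:ii}, one further coefficient recovers $us^{r}$, whence $s = us^{r+1}/us^{r}$ and $u$ are determined individually.

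Parts~\ref{item:4} and~\ref{item:5} will both rest on the identity
\begin{equation*}
f' = u s^{r+1} \cdot q^{m-1},
\end{equation*}
which I would prove by writing $f' = q^{m-1}(q + m x q')$ and computing, modulo~$p$, that $m x q' \equiv -x^{\ell(r+1)} + \varepsilon us^{r} x^{\ell}$ (using $m \ell = r-1 \equiv -1$ and $\ell(r+1) \equiv \ell$), so the bracket telescopes to $u s^{r+1}$. For part~\ref{item:4}, if $m = 1$ then $\ell = r-1$ and $f = x^{r^{2}} - \varepsilon us^{r} x^{r} + us^{r+1} x$ is $r$-additive, hence $f(x+w) = f(x) + f(w)$ and $f^{(w)} = f$ for every $w \in F$, yielding stabilizer~$F$ and orbit of size~$1$ over $\Fq$. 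For $m > 1$, from $f^{(w)} = f$ I would deduce $f'(x+w) = f'(x)$, so by the displayed identity $q(x+w)^{m-1} = q(x)^{m-1}$; both sides being monic of the same degree forces $q(x+w) = q(x)$, and the coefficient of $x^{\ell(r+1)-1}$ in this difference is $\ell(r+1) w \equiv \ell w \pmod p$, nonzero unless $w = 0$. Hence the stabilizer is trivial and orbits have size~$q$ over $\Fq$.

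For part~\ref{item:5}, suppose $f^{(w)} = \SimpConst{u^{*}, s^{*}, \varepsilon^{*}, m^{*}}$ with associated polynomial $q^{*}$. Differentiation and the displayed identity give $us^{r+1} \cdot q(x+w)^{m-1} = u^{*}(s^{*})^{r+1} \cdot (q^{*})^{m^{*}-1}$; degree comparison together with $\ell m = \ell^{*} m^{*} = r-1$ forces $m = m^{*}$ and $\ell = \ell^{*}$, matching leading coefficients gives $us^{r+1} = u^{*}(s^{*})^{r+1}$, and the same coefficient-of-$x^{\ell(r+1)-1}$ argument in $q(x+w) = q^{*}$ then forces $w = 0$, so $f^{(w)} = f$. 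The principal technical step throughout is the derivative identity; with it in hand, the shift analyses in parts~\ref{item:4} and~\ref{item:5} reduce to a single coefficient comparison in $q(x+w) - q(x)$, and parts~\ref{item:iv} and~\ref{item:ii} amount to routine bookkeeping with no structural obstacle.
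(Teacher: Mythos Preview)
Your argument is correct, and for parts~\ref{item:iv} and~\ref{item:ii} it is essentially the paper's: read off $\varepsilon$ and $m$ from $\deg_2 f$, then recover $us^{r+1}$ (and, when $\varepsilon=1$, also $us^{r}$) from the two top nontrivial coefficients. The paper phrases the first step as divisibility of $\deg_2 f$ by $r$ rather than by $p$, but since $\ell\mid r-1$ is coprime to $p$, both criteria separate $r^2-\ell r$ from $r^2-\ell r-\ell$.

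For parts~\ref{item:4} and~\ref{item:5} you take a genuinely different route. The paper expands $f^{(w)}$ directly and reads off the single coefficient $f^{(w)}_{r^2-\ell r-\ell-1}=wus^{r+1}$, which vanishes iff $w=0$ when $m>1$. Your approach instead rests on the derivative identity $f'=us^{r+1}q^{m-1}$ (a clean consequence of $m\ell\equiv-1\pmod p$), reducing the shift analysis to $q(x+w)^{m-1}=q(x)^{m-1}$ and then to a single coefficient in $q(x+w)-q(x)$. This is arguably more structural: it explains \emph{why} only one coefficient matters (everything else in $f'$ sits inside a perfect $(m{-}1)$st power), and it makes the passage to~\ref{item:5} uniform with~\ref{item:4}. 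The paper's method, by contrast, avoids any factorization argument and never needs to know the shape of $f'$.

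One small gap to close in~\ref{item:5}: when $m=1$ (equivalently $m^*=1$, forced by $\deg f'=0$), the identity $f'=us^{r+1}$ is a nonzero constant and you cannot pass from $q(x+w)^{m-1}=(q^*)^{m^*-1}$ to $q(x+w)=q^*$. But this case is immediate from~\ref{item:4}, since the orbit of $f$ is then $\{f\}$; you should say so explicitly.
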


\begin{proof}
The appearance of $O(x^{i})$ for some integer $i$ in an equation means
the existence of some polynomial of degree at most $i$ that makes the
equation valid.

Let $\ell = (r-1)/m$.  Then $\gcd(r, \ell) = \gcd(r, m) = 1$ and $\ell
m \equiv -1 \bmod p$.  We have
\begin{align}
f & = x ( x^{\ell(r+1)} - \varepsilon u s^{r} x^{\ell} +
  us^{r+1})^{m} \\
  & =  x ( x^{r^{2}-1} - m \varepsilon u s^{r} x^{r^{2}-\ell r - 1} +
  mus^{r+1}x^{r^{2}-\ell r - \ell - 1} + O(x^{r^{2}-2\ell r - 1 })) \\
  & = x^{r^{2}} - m \varepsilon u s^{r} x^{r^{2}-\ell r} + m u s^{r+1}
x^{r^{2}-\ell r - \ell} + O(x^{r^{2}-2\ell r}), \label{eq:10} \\
 f_{r^{2}-\ell r} & = -m\varepsilon us^{r},   \label{eq:4} \\
 f_{r^{2} - \ell r-\ell} & = mus^{r+1} \neq 0, \label{eq:9} \\
\deg_{2} f & = \begin{cases}
r^{2} - \ell r & \text{ if } \varepsilon =1, \\
r^{2} -\ell r - \ell & \text{ if } \varepsilon = 0.
\end{cases} \label{eq:55}
\end{align}
From the last equation, we find $\varepsilon = 1$ if $r \mid \deg_{2} f $, and
$\varepsilon = 0$ otherwise. For either value of $\varepsilon$,
$\deg_{2} f$ determines $\ell$ and $m = (r-1)/\ell$ uniquely.
Similarly, $\deg_{2} f^{*}$ determines $\varepsilon^{*}$, $\ell^{*}$,
and $m^{*}$ uniquely.  Therefore, if $\deg_{2} f = \deg_{2} f^{*}$,
then
\begin{equation}
  \label{eq:15}
  (\varepsilon, \ell, m) = (\varepsilon^{*}, \ell^{*}, m^{*}).
\end{equation}
Furthermore, $m$ and the coefficient $f_{r^{2}-\ell r - \ell}$
determine $u s^{r+1} = f_{r^2-\ell r -\ell} /m$ uniquely by
\eqref{eq:9}.  Similarly, $m^{*}$ and $f^{*}_{r^{2}-\ell^{*}r -\ell^{*}}$ determine
$u^{*}(s^{*})^{r+1}$ uniquely.  Thus, if $m = m^{*}$ and $f_{r^{2}-\ell r
  - \ell} = f^{*}_{r^{2}-\ell^{*} r - \ell^{*}}$, then
\begin{equation}
  \label{eq:13}
us^{r+1}  = u^{*}(s^{*})^{r+1}.
\end{equation}

\ref{item:ii} If $(u,
s, \varepsilon, m) = (u^{*},
s^{*}, \varepsilon^{*}, m^{*})$, then $f=f^{*}$.  On
the other hand, we have $f_{r^2 - \ell r} = -mus^{r} \neq 0$ in
\eqref{eq:4} and with \eqref{eq:9} this determines uniquely
\begin{equation}
  \label{eq:58}
  \begin{split}
s & =  - f_{r^{2}-\ell r - \ell} /   f_{r^{2}-\ell r}, \\
u & =  - f_{r^{2} -\ell r} / ms^{r} = \ell f_{r^{2} -\ell r} / s^{r}.
  \end{split}
\end{equation}
This implies the claim \ref{item:ii}.

\ref{item:iv} The condition $(us^{r+1}, \varepsilon, m) = (u^{*}(s^{*})^{r+1}, \varepsilon^{*}, m^{*})$ is sufficient for $f
= f^{*}$ by direct computation from \eqref{eq:7}.  It is also
necessary by \eqref{eq:15} and \eqref{eq:13}.

\ref{item:4} For $m = 1$, $f$ is $r$-additive as noted after the proof of
\autoref{thm:nonadd} and $f^{[w]} = f$
for all $w \in F$.
For $m > 1$ and $w \in F$, we find
\begin{align}
\begin{split} \label{eq:10shift}
f^{[w]} &= x^{r^{2}} - m \varepsilon u s^{r} x^{r^{2}-\ell r} + m u
s^{r+1} x^{r^{2}-\ell r - \ell} \\
& \quad + w u s^{r+1}x^{r^{2}-\ell r - \ell
  -1} + O(x^{r^{2}-\ell r - \ell -2}),
\end{split}\\
f^{[w]}_{r^{2}-\ell r}  & = f_{r^{2}-\ell r} = -m \varepsilon u s^{r}, \label{eq:4w} \\
  f^{[w]}_{r^{2} - \ell r-\ell} & = f_{r^{2} - \ell r
    -\ell} = mus^{r+1} \neq 0, \label{eq:9w} \\
  f^{[w]}_{r^{2} - \ell r-\ell - 1} & = wus^{r+1}.   \label{eq:5}
\end{align}
We have $f = f^{[0]}$ by definition and $f \neq f^{[w]}$ for $w \neq
0$ by \eqref{eq:5} and $us^{r+1} \neq 0$.

\ref{item:5} For $m = 1$, the claim follows from \ref{item:4}.  For $m
> 1$ and $w \in F$, assume $f_{0} = \SimpConst{u_{0}, s_{0},
  \varepsilon_{0}, m_0} = f^{[w]}$ for parameters $u_{0}, s_{0},
\varepsilon_{0}, m_{0}$ satisfying the conditions of
\autoref{thm:nonadd}.  Then $\deg_{2}f_{0} = \deg_{2} f^{[w]}$ by
assumption and
\begin{equation}
\label{eq:55w}
\deg_{2} f^{[w]} = \deg_{2} f = \begin{cases}
r^{2} - \ell r & \text{ if } \varepsilon =1, \\
r^{2} -\ell r - \ell & \text{ if } \varepsilon = 0,
\end{cases}
\end{equation}
from \eqref{eq:10shift} and \eqref{eq:55}.  Thus, we have
$\ell = \ell_0$ by \eqref{eq:15}.  The coefficient of $x^{r^{2} -
  \ell r - \ell -1}$ is $0$ in $f_{0}$ and $wus^{r+1}$ in $f^{[w]}$
by \eqref{eq:10} and \eqref{eq:5}, respectively.  With
$us^{r+1} \neq 0$, we have $w = 0$ and $f_{0} = f^{[0]} = f$.
\end{proof}

\autoref{algo:recover-simple-paras} identifies the examples of
\autoref{thm:nonadd} and their shifts.  The algorithm involves
divisions which we execute conditionally ``if defined''.  Namely, for
integers the quotient is returned, if it is an integer, and for field
elements, if the denominator is nonzero. Otherwise, ``failure'' is
returned.  Besides the field operations $+$, $-$, $\cdot$, we assume a routine for computing the number of roots in $F$ of a polynomial.
Furthermore, we denote by $\MM(n)$ a number of field operations which
is sufficient to compute the product of
two polynomials of degree at most $n$.

\begin{algorithm2f}
\caption{Identify simply original polynomials}
\label{algo:recover-simple-paras}
\KwIn{a polynomial $f = \sum_i f_{i}x^{i} \in P_{r^{2}}(F)$ with all $f_{i} \in \ff$ and $r$ a power of $\chara \ff$}
\KwOut{integer $k$, parameters $u, s, \varepsilon, m$ as in \autoref{thm:nonadd}, and $w\in F$ such that $f = \SimpConst{u, s, \varepsilon, m}^{[w]}$ has a $k$-collision with $k=\# T$ as in \eqref{eq:7}, if such values exist, and ``failure'' otherwise}

\lIf{$\deg_{2} f = -\infty$}{\Return{``failure''}} \label{step:1}
\eIf{$r \mid \deg_{2}f$}{
  $\varepsilon \gets 1$\; \label{step:3}
  $\ell \gets (r^{2}-\deg_{2}f)/r$ and $m \gets  (r-1)/\ell$ if
  defined\; \label{step:2}
  $s \gets -f_{r^{2}-\ell r-\ell}/f_{r^{2}-\ell r}$ if defined\; \label{step:5}
}{
  $\varepsilon \gets 0$\; \label{step:8}
  $\ell \gets (r^{2}-\deg_{2}f)/(r+1)$ and $m \gets  (r-1)/\ell$ if
  defined\; \label{step:4}
  $s \gets 1$\; \label{step:10}
}
$u \gets -\ell f_{r^{2}-\ell r-\ell} / s^{r+1}$ if defined \; \label{step:11}
\lIf{$us= 0$}{\Return{``failure''}} \label{step:20}
$w \gets mf_{r^{2}-\ell r - \ell - 1}/ f_{r^{2}-\ell r - \ell}$ if defined\; \label{step:16}
\If{$f = \SimpConst{u, s, \varepsilon, m}^{[w]}$ \label{step:18}}{
$k \gets \# \{y \in F \colon y^{r+1} - \varepsilon u y + u = 0\}$\label{step:19}\;
  \KwRet{$k, u, s, \varepsilon, m, w$}\;
}
\KwRet{``failure''}
\end{algorithm2f}
\begin{theorem}
\label{thm:algo1}
   \autoref{algo:recover-simple-paras} works correctly as
   specified. If $F=\Fq$, it  takes $O(\MM(n) \log (nq))$
   field operations on input a polynomial of degree $n=r^2$.
\end{theorem}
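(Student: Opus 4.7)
The plan is to verify correctness by showing that the algorithm recovers the uniquely determined parameters predicted by Fact~\ref{lem:unique1} and then confirms them with the equality test in line~\ref{step:18}, and to bound the running time by summing the costs of one polynomial exponentiation, one Taylor shift, and one modular root count.

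For correctness I would first dispose of the degenerate case $\deg_{2}f = -\infty$, where $f = x^{r^{2}}$ cannot equal any shift of $\SimpConst{u,s,\varepsilon,m}$ since \eqref{eq:9w} forces $f_{r^{2}-\ell r -\ell} = mus^{r+1} \neq 0$.  In the remaining cases, I would assume that parameters $(u,s,\varepsilon,m,w)$ exist with $f = \SimpConst{u,s,\varepsilon,m}^{(w)}$ and read off from Fact~\ref{lem:unique1} that each step reconstructs them uniquely: by \eqref{eq:55w} and \eqref{eq:55}, $\varepsilon = 1$ precisely when $r \mid \deg_{2}f$, and in either case $\deg_{2}f$ determines $\ell$ and $m = (r-1)/\ell$, where the integrality checks in lines~\ref{step:2} and~\ref{step:4} fail exactly when no such admissible $\ell, m$ exist.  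For $\varepsilon = 1$, lines~\ref{step:5}--\ref{step:6} recover $s, u$ via \eqref{eq:58}; for $\varepsilon = 0$, normalizing $s = 1$ is legitimate by Fact~\ref{lem:unique1}\ref{item:iv} (only the product $us^{r+1}$ is determined by $f$), and \eqref{eq:9} then gives $u$ in line~\ref{step:11}.  Line~\ref{step:20} rules out the degenerate case $us = 0$, which is incompatible with \autoref{thm:nonadd}.  Fact~\ref{lem:unique1}\ref{item:4} justifies $w = 0$ when $m = 1$ (line~\ref{step:15}), and \eqref{eq:5} pins down $w$ uniquely from $f_{r^{2}-\ell r - \ell -1}$ when $m > 1$ (line~\ref{step:16}).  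The verification in line~\ref{step:18} then either confirms this unique candidate or certifies that no admissible parameters exist, and once $(u,s,\varepsilon,m)$ are fixed the definition of $T$ in \eqref{eq:7} makes line~\ref{step:19} return the correct collision size $k = \#T$.

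For the running time I would tally the dominant costs over $\Fq$.  Computing $\SimpConst{u,s,\varepsilon,m}$ amounts to raising a trinomial to the $m$th power by repeated squaring, which takes $O(\MM(n)\log n)$ field operations; the Taylor shift by $w$ inside line~\ref{step:18} costs another $O(\MM(n)\log n)$; and counting roots of $y^{r+1}-\varepsilon uy + u$ reduces to computing $y^{q}\bmod(y^{r+1}-\varepsilon uy + u)$ by repeated squaring in $O(\MM(r)\log q)$, followed by a fast gcd in $O(\MM(r)\log r)$.  Using $\MM(r)\leq \MM(n)$ and $r\leq n$, the total is $O(\MM(n)\log(nq))$.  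The substantive content is already packaged in Fact~\ref{lem:unique1}, so the main obstacle I anticipate is bookkeeping: arguing precisely that each ``if defined'' branch---integrality of $\ell$ and $m$, nonvanishing of $f_{r^{2}-\ell r}$ and $f_{r^{2}-\ell r - \ell}$---fails exactly when no valid parameters could produce $f$, so that no admissible $f$ is wrongly rejected before the verification step rather than through it.
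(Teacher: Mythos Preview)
Your proposal is correct and follows essentially the same approach as the paper: both argue that an input of the form $\SimpConst{u,s,\varepsilon,m}^{(w)}$ survives every ``if defined'' check via the coefficient identities \eqref{eq:4}--\eqref{eq:5} and \eqref{eq:4w}--\eqref{eq:55w}, so that the verification in line~\ref{step:18} is the sole gatekeeper, and both bound the cost by the exponentiation/shift in line~\ref{step:18} plus the $\gcd(y^q-y,\,y^{r+1}-\varepsilon uy+u)$ computation in line~\ref{step:19}. One small clarification: you do not need the biconditional ``fails exactly when no valid parameters could produce $f$'' for the intermediate checks---only the one direction that admissible inputs are not rejected early, which is precisely what you identify in your last sentence and what the paper proves.
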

\begin{proof}
  For the first claim,
  we show that for $\alt{u}, \alt{s}, \alt{\varepsilon}, \alt{m}$ as in \autoref{thm:nonadd} and $\alt{w}\in F$ the algorithm does not fail on input $f = \SimpConst{\alt{u}, \alt{s}, \alt{\varepsilon}, \alt{m}}^{[\alt{w}]}$.

  We have $\deg_{2}f > 0$ by \eqref{eq:55w}.  Thus, \autoref{step:1}
  does not return ``failure''.  By the same equation, we have $r \mid
  \deg_{2} f$ if and only if $\varepsilon_{0} = 1$.  Therefore,
  $\varepsilon = \alt{\varepsilon}$ after step~\ref{step:3} or
  \ref{step:8}, respectively, and since \eqref{eq:55w} determines
  $\alt{\ell}=(r-1)/\alt{m}$ uniquely, we find $\ell = \alt{\ell}$ and $m =
  (r-1)/\alt{\ell} = \alt{m}$ after step \ref{step:2} or \ref{step:4},
  respectively.  If $\varepsilon = 1$, then step \ref{step:5} computes
  $s = \alt{s}$ from \eqref{eq:58}, \eqref{eq:4w}, and \eqref{eq:9w}.
  Furthermore, step \ref{step:11} computes $u = \alt{u}$ from
  \eqref{eq:9} and \eqref{eq:9w}.  If $\varepsilon  = 0$, then
\begin{equation}
\label{eq:18}
\SimpConst{\alt{u}, \alt{s}, 0, m}^{[\alt{w}]} = (x(x^{\ell(r+1)} + u_0 s_0 ^{r+1})^{m})^{[\alt{w}]} = \SimpConst{u_0 s_0 ^{r+1},  1, 0, m}^{[\alt{w}]}.
\end{equation}
Therefore, we can choose $s = 1$ in \autoref{step:10} and set $u = - \ell f_{r^2 - \ell r
  -\ell} = u_0 s_0^{r+1}$ by \eqref{eq:9} and \eqref{eq:9w} in \autoref{step:11}. For
either value of $\varepsilon$, we
have $u s \neq 0$ from $u_{0}s_{0} \neq 0$ and \autoref{step:20} does not return ``failure''.

For $m = 1$, we have
\begin{equation}
  \label{eq:11}
  \SimpConst{u, s, \varepsilon, 1}^{[w_{0}]} =
  \SimpConst{u, s, \varepsilon, 1}^{[0]}
\end{equation}
by \autoref{lem:unique1}~\ref{item:4} and $w = f_{0} / f_{1} = 0$ in \autoref{step:16}
is a valid choice.  For $m>1$, we find $w_{0}$ from \eqref{eq:9w} and \eqref{eq:5} as
\begin{equation}
  \label{eq:12}
  w = m f_{r^{2}-\ell r - \ell - 1}/ f_{r^{2}-\ell r - \ell} = w_0.
\end{equation}
A polynomial $f$ of
the assumed form passes the final test in step \ref{step:18}, while an
$f$ not of this form will fail here at the latest.  The size $k$ of the
set $T = \{t \in \ff \colon t^{r+1} -\varepsilon ut + u = 0\}$ is
computed in step \ref{step:19} and $f$ is a $k$-collision according to \autoref{thm:nonadd}.

In the following cost estimate for $\ff = \Fq$, we ignore the (cheap) operations on integers.
The calculation of the right-hand side in step~\ref{step:18} takes
$O(\MM(n) \log n)$ field operations, and the test another $n$
operations.
In step~\ref{step:19}, we compute $k$ as $\deg_{y} (\gcd ( y^{q} -
y, y^{r+1} - \varepsilon u y + u))$ with $O(\MM(r)(\log q + \log r))$ field operations.
The cost of all other steps is dominated by these bounds.
\end{proof}

Let $C_{n,k}^{(S)}(\ff)$ denote the set of polynomials in $P_{n}(\ff)$
that are shifts of some $\SimpConst{u, s, \varepsilon, m}$ with $T$ as in \eqref{eq:7} of
cardinality $k$.
Over a finite field, $\# C_{r^{2},k}^{(S)}(\Fq)$ can be computed
exactly, as in \citet*[Corollary 6.3]{gatgie10a}.

\begin{proposition}
\label{fac:simply_count}
Let $r$ be a power of $p$, $q$ a power of $r$, and $\divfct$ the number of positive divisors of $r-1$.  For $k \geq 2$, we have
\begin{equation}
\# C_{r^{2},k}^{(S)}(\Fq) = \begin{cases*}
\dfrac{(\divfct q - q +1)(q-1)^{2}(r-2)}{2(r-1)} & if $k = 2$, \\
\dfrac{(\divfct q - q +1)(q-1)(q-r)}{r(r^{2}-1)} & if $k = r+1$, \\
0 & otherwise.
\end{cases*}
\end{equation}
\end{proposition}

\begin{proof}
We count the polynomials in $C_{r^{2},k}^{(S)}(\Fq)$ by counting the
admissible parameters $u$, $s$, $\varepsilon$, $m$, $w$ modulo the ambiguities described in \autoref{lem:unique1}.

For $\varepsilon = 1$, we count the possible $u \in
\Fq^{\times}$ such that $y^{r+1} - uy + u
\in \Fq [y]$ has exactly $k$ roots in $\Fq$.  Let $a,b \in
  \Fq^{\times}$ and $u = a^{r+1}b^{-r}$.  The invertible transformation $x
  \mapsto y= -ab^{-1}x$ gives a
  bijection
\begin{equation}
\label{eq:16}
 \{ x \in
    \Fq^{\times} \colon x^{r+1} + a x + b = 0\}  \leftrightarrow  \{ y \in \Fq^{\times} \colon  y^{r+1} - uy + u = 0\}.
\end{equation}
Theorem~5.1 and Proposition~5.4 of \cite*{gatgie10a} determine the
number $c_{q,r,k}^{(2)}$ of pairs $(a,b) \in (\Fq^{\times})^{2}$ such
that $x^{r+1} + a x + b$ has exactly $k$ roots, as described below.
Every value of $u$ corresponds to exactly $q-1$ pairs $(a,b)$, namely
an arbitrary $a \in \Fq^{\times}$ and $b$ uniquely determined by
$b^{r} = u^{-1}a^{r+1}$.  Hence, there are exactly
$c_{q,r,k}^{(2)}/(q-1)$ values for $u$ where $\# T = k$.  For $m=1$,
the orbit under original shifting has size $1$ by
\autoref{lem:unique1}~\ref{item:4} and taking into account the $q-1$
possible choices for $s$ we find that there are $c_{q,r,k}^{(2)}$
polynomials of the form $\SimpConst{u, s, 1, 1}^{[w]}$.  For $m>1$,
the orbit under original shifting contains exactly one polynomial of
the form \eqref{eq:7} by \autoref{lem:unique1}~\ref{item:5} and has
size $q$ by \ref{item:4}.  Taking into account the $q-1$ choices for
$s$ and the $\tau - 1$ possible values for $m$, we find that there are
$c_{q,r,k}^{(2)} \cdot (\divfct-1) \cdot q$ polynomials of the form
$\SimpConst{u, s, 1, m}^{[w]}$.

For $\varepsilon = 0$, we have $\SimpConst{u,s,0,m}^{[w]} =
\SimpConst{us^{r+1},1,0,m}^{[w]}$ as in \eqref{eq:18} and $ T = \{t \in \Fq \colon t^{r+1} + us^{r+1} =
0\}$ as in \eqref{eq:7}.  This set
has exactly $\gamma = \gcd(r+1,q-1)$ elements, if $-u$ is an $(r+1)$st
power, and is empty otherwise.  Then $\# T = k \geq 2$ if and only if
$k = \gamma$ and $-u$ is an $(r+1)$st power.  There are exactly
$(q-1)/\gamma$ distinct $(r+1)$st powers in $\Fq^{\times}$ and
therefore exactly $(q-1)/\gamma$ distinct values for $us^{r+1}$ such
that $\# T = \gamma$.  With $\delta$ being Kronecker's delta function,
we find, as above, that there are $\delta_{\gamma,k} \cdot (q-1) / \gamma$ polynomials of the
form $\SimpConst{u,s,0,1}^{[w]}$ in $C_{r^{2},k}^{(S)}(\Fq)$ and $
\delta_{\gamma,k} \cdot (\tau-1)q(q-1) / \gamma$ of the form
$\SimpConst{u,s,0,m}^{[w]}$ with $m>1$.

This yields
\begin{equation}
\# C_{r^{2},k}^{(S)}(\Fq)  =  \left(\divfct q  -q  + 1\right) \cdot \left( c_{q,r,k}^{(2)} + \delta_{\gamma,k} \frac{
        q-1}{\gamma}  \right).
\end{equation}
The work cited above provides the following explicit expressions for
$k \geq 2$, with $q = r^{d}$:
\begin{align}
  c_{q,r,2}^{(2]} & =
  \begin{cases}
    \dfrac{(q-1)(qr-2q-2r+3)}{2(r-1)} & \text{if $q$ and $d$ are odd}, \\
\dfrac{(q-1)^{2}(r-2)}{2(r-1)} & \text{otherwise},
  \end{cases} \\
  c_{q,r,r+1}^{(2)} & =
  \begin{cases}
    \dfrac{(q-1)(q-r^{2})}{r(r^{2}-1)} & \text{if $d$ is even}, \\
\dfrac{(q-1)(q-r)}{r(r^{2}-1)} & \text{if $d$ is odd},
  \end{cases}
\end{align}
and $c_{q,r,k}^{(2)} = 0$ for $k \notin \{2, r+1\}$.
Furthermore, we have from Lemma 3.29 in \citet[Preprint]{gat09b}
  \begin{equation}
    \gamma = \gcd(r+1, r^{d}-1) = \begin{cases}
1 & \text{if $d$ is odd and $r$ is even}, \\
2 & \text{if $d$ is odd and $r$ is odd}, \\
r+1 & \text{if $d$ is even}.
\end{cases}
\end{equation}
The claimed formulas follow from
\begin{equation}
  c_{q,r,k}^{(2)} + \delta_{\gamma,k} \frac{q-1}{\gamma} = \left\{ \begin{alignedat}{3}
& \frac{(q-1)^{2}(r-2)}{2(r-1)} && \text{ if $k=2$}, && \\
& \frac{(q-1)(q-r)}{r(r^{2}-1)} && \text{ if $k=r+1$},&& \\
& 0 && \text{ otherwise}. && \qedhere
\end{alignedat}
\right.
\end{equation}
\end{proof}

For a prime $p\geq 7$, we have $\tau \geq 4$.
Large values of $\tau$ occur when $m \approx \exp(k \log k)$ is
the product of the first $k$ primes and $p \leq m^{C}$ the smallest
prime congruent $1 \bmod m$  for Linnik's
constant $C$.  Then $k \approx \log m/\loglog m
\gtrsim C^{-1} \log p/\loglog p$  and
$ \tau \geq 2^k \gtrsim 2^{ C^{-1} \log p/\loglog p }$.  By
\cite{hea92} and \cite{xyl11} we can take $C$ just under $5$.  Except for the
constant factor, $\tau$ is asymptotically not more than this
value \cite[Theorem~317]{harwri85}.  \cite{lucshp08} give
general results on the possible values of $\tau$.  It follows
that $\# C_{r^{2},2}^{(S)}(\Fq) \approx \tau q^{3}/2$ is in
$q^{3} O(p^{1/\loglog p})$.

The odd/even distinctions for $q$, $r$, and $d$ cancel out in the formula
of \autoref{fac:simply_count}.  This might indicate that those
distinctions are alien to the problem.

The second and new construction of collisions goes as follows.
\begin{theorem}
\label{thm:constmulti}
  Let $r$ be a power of $p$, $b \in \ff^\times$, $a \in \ff \setminus \{0, b^{r}\}$, $a^* = b^r - a$, $m$ an integer with $1 < m <r-1$ and $p \nmid m$,  $m^* = r - m$, and
  \begin{equation}
  \label{eq:3normal}
  \begin{split}
   f = \MultConst{a, b, m} &= x^{m m^*} (x - b)^{m m^*} \left(x^m + a^* b^{-r} ((x-b)^{m} - x^m)\right)^m \\
	&\quad\quad\quad \cdot
        \left(x^{m^*} + a b^{-r} ((x-b)^{m^*} - x^{m^*})
        \right)^{m^*},\\
g &= x^m (x - a)^{m^*}, \\
h &= x^{r} + a^{*}b^{-r}(x^{m^*}(x-b)^m - x^{r}),  \\
g^* &= x^{m^*} (x - a^*)^m, \\
h^* &= x^{r} + ab^{-r}(x^m (x-b)^{m^*} - x^{r}).
\end{split}
\end{equation}
Then $f = g \circ h = g^{*} \circ h^{*} \in P_{r^{2}}(\ff)$ has a 2-collision.
\end{theorem}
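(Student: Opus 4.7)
The plan is to verify the theorem by direct computation in three separate pieces: (i) $g \circ h = f$, (ii) $g^* \circ h^* = f$, and (iii) $(g,h) \neq (g^*,h^*)$, supplemented by a routine check that all four components (and hence $f$) are monic and original of the asserted degrees.

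The heart of the argument is a single algebraic identity driven by Frobenius. Since $r$ is a power of the characteristic and $a + a^* = b^r$, I would first rewrite
\begin{align}
h &= a b^{-r} x^r + a^* b^{-r} x^{m^*}(x-b)^m = b^{-r} x^{m^*}\bigl(a x^m + a^*(x-b)^m\bigr), \\
h - a &= b^{-r}\bigl(a(x^r - b^r) + a^* x^{m^*}(x-b)^m\bigr) = b^{-r}(x-b)^m\bigl(a(x-b)^{m^*} + a^* x^{m^*}\bigr),
\end{align}
where the second line uses $x^r - b^r = (x-b)^r$ and factors out $(x-b)^m$. Substituting these into $g \circ h = h^m (h-a)^{m^*}$ immediately yields
\begin{equation*}
g \circ h = b^{-r^2}\, x^{m m^*}(x-b)^{m m^*}\bigl(a x^m + a^*(x-b)^m\bigr)^{m}\bigl(a^* x^{m^*} + a(x-b)^{m^*}\bigr)^{m^*},
\end{equation*}
and the closed form of $f$ given in \eqref{eq:3normal} reduces to the same expression once one observes $x^m + a^* b^{-r}((x-b)^m - x^m) = b^{-r}(a x^m + a^*(x-b)^m)$ and likewise for the second factor. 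The identity $g^* \circ h^* = f$ is then free: the construction is invariant under the involution $(a,m) \leftrightarrow (a^*, m^*)$ (which carries $g$ to $g^*$, $h$ to $h^*$, and leaves the symmetric closed-form for $f$ unchanged), so the same computation with swapped roles gives $g^* \circ h^* = f$.

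For the monic-original claim, I would note that $g = x^m(x-a)^{m^*}$ and $g^* = x^{m^*}(x-a^*)^m$ are manifestly monic original of degree $r$, while the leading coefficient of $h$ is $(a+a^*)b^{-r} = 1$ and $h(0) = 0$ (same for $h^*$); hence $f = g \circ h$ is monic original of degree $r^2$. To see that the two decompositions really differ, compare the coefficients of $x^{r-1}$ in $g$ and $g^*$: they are $-m^* a$ and $-m a^*$ respectively, and their difference equals $m b^r - r a = m b^r$ in $F$, which is nonzero because $p \nmid m$ and $b \neq 0$. Thus $g \neq g^*$ and the pairs are distinct.

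No step here is a serious obstacle; the proof is essentially one well-chosen factorisation. The only non-obvious move is recognising that Frobenius turns $h-a$ into $(x-b)^m$ times a polynomial of complementary degree $m^*$, producing the matched factorisation of $g \circ h$ that coincides with the symmetric factorisation of $g^* \circ h^*$. Once that is seen, everything else is bookkeeping in coefficients.
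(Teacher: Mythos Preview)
Your proof is correct and follows essentially the same route as the paper's: both hinge on the factorisations $h = x^{m^*}H$ and $h-a = (x-b)^m H^*$ (your $b^{-r}(ax^m + a^*(x-b)^m)$ and $b^{-r}(a^* x^{m^*} + a(x-b)^{m^*})$ are exactly the paper's $H$ and $H^*$), yielding $g\circ h = g^*\circ h^* = x^{mm^*}(x-b)^{mm^*}H^m(H^*)^{m^*}$. The only difference is in the distinctness check: the paper observes that $g=g^*$ would force $m=m^*$ (comparing the multiplicity of $0$), hence $2m=r$ and $p=2\mid m$, while you compare the $x^{r-1}$-coefficients directly---both arguments are one-liners.
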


The polynomials $f$ in
\eqref{eq:3normal} are ``multiply original'' in the sense that they
have a multiple root at $0$.  This motivates the designation $M$.
The notation is set up so that ${}^{*}$ acts as an involution on our
data, leaving $b$, $f$, $r$, and $x$ invariant.

Mike \cite{zie11} points out that the rational functions of case (4) in Proposition 5.6 of \cite{avazan03} can be transformed into \eqref{eq:3normal}.  Zieve also mentions that this example already occurs in unpublished work of his, joint with Bob Beals.

\begin{proof}
Let
\begin{equation}
  \label{eq:48}
\begin{split}
  \qepol & = h / x^{m^*} = x^m + a^* b^{-r}((x-b)^m - x^m), \\
\qepol^{*} & = h^{*}/x^m = x^{m^*} + a b^{-r}((x-b)^{m^*} - x^{m^*}).
\end{split}
\end{equation}
Then $h-a = (x-b)^m \qepol^{*}$ and $h^{*} - a^{*} = (x-b)^{m^*} \qepol$.  It follows that
\begin{equation}
  \label{eq:2}
  g \circ h = g^{*} \circ h^{*} = x^{m m^*} (x-b)^{m m^*} \qepol^m (\qepol^{*})^{m^*} = f.
\end{equation}
If $g = g^*$, then the coefficients of $x^{r-1}$ in $g$ and $g^{*}$
yield $mb^{r} = 0$, hence $p \mid m$, a contradiction. Thus $f$ is a 2-collision.
\end{proof}

For $r \leq 4$, there is no value of $m$ satisfying the assumptions.
The construction works for arbitrary  $a \in \ff$ and $1 \leq m \leq r-1$.
But when $a \in \{0,b^r\}$, we get a Frobenius collision; see \autoref{exa:frob}.
When $p \mid m$, we write $m = p^e m_0$ with $p \nmid m_0$ and have $f = x^{p^e} \circ \MultConst{a, b^{p^e} \!, m_0} \circ x^{p^e}$ with $r/{p^e}$ instead of $r$ in \eqref{eq:3normal}.
When $m$ is $1$ or $r-1$, an original shift of \eqref{eq:3normal}
yields a polynomial of the form $\SimpConst{u, s, \varepsilon, m}$.  Indeed, for $m = 1$,
let $w = a^*b^{-r+1}$, $c =  (ab^{1-r})^r - a^*$, and
 \begin{equation}
   (u, s, \varepsilon, m, t, t^*) = \begin{cases*}
(-aa^*b^{1-r}, 1, 0, r-1, -a^*b^{1-r}, ab^{1-r}) & if $c = 0$, \\
(c/ s^{r}, -aa^*b^{1-r} / c, 1, r-1, -c/a, c/a^*) & otherwise.
  \end{cases*}
  \end{equation}
Then $M(a,b,1)^{[w]} = S(u,s,\varepsilon,m)$, $(g,h)^{[w]}$ is of
the form \eqref{eq:80}, and so is $(g^*, h^*)^{[w]}$ with $t$ replaced by
$t^*$. Furthermore, for $m = r-1$, we have $M(a,b,r-1)=M(a^*,b,1)$ and the claimed parameters can be found as described by interchanging $a$ and $a^*$.

Next, we describe the (non)uniqueness of this construction. We take all polynomial gcds to be monic, except that $\gcd(0,0) = 0$.

\begin{proposition}
\label{pro:multi_uniqueness}
 Let $r$ be a power of $p$, $b \in \ff^\times$, $a \in \ff \setminus
 \{0, b^{r}\}$, $m$ an integer with $1 < m <r-1$ and $p \nmid m$, and $f = \MultConst{a,b,m}$ as in \eqref{eq:3normal}.
 Then the following hold.
 \begin{ronumerate}
 \item\label{item:-1} In the notation of \autoref{thm:constmulti} and with $H$ and $H^{*}$ as in \eqref{eq:48}, we
   have $\gcd(m, m^*)=1$ and
   the four polynomials $x$, $x-b$,
   $\qepol$, and $\qepol^*$ are squarefree and pairwise coprime.
 \item\label{item:1} The stabilizer of $f$ under original shifting is
   $\{ 0\}$. For $\ff = \Fq$, the orbit of $f$ under original shifting has size $q$.
 \item\label{item:0} For $\alt{a},\alt{b},\alt{m}$ satisfying  the conditions of \autoref{thm:constmulti}, we have  $\MultConst{a,b,m} =\MultConst{\alt{a},\alt{b},\alt{m}}$ if and only if $(\alt{a},\alt{b},\alt{m}) \in \{ (a,b,m), (a^{*},b, m^{*})\}$. If we impose the additional condition $m < r/2$, then $(a,b,m)$ is uniquely determined by $\MultConst{a,b,m}$.
 \item\label{item:2} There are exactly two polynomials of the form
   \eqref{eq:3normal} in the orbit of $f$ under original shifting, namely $f$ and $f^{[b]}=\MultConst{-a^*, -b, m}$.
 \end{ronumerate}
\end{proposition}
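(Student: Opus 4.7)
The plan is to establish the four items in the order (i), (iii), (ii), (iv), so that the root-multiset structure and the derivative factorization coming from (i) and (iii) are available for (ii) and (iv). A useful preliminary is the symmetric rewriting
\begin{equation*}
H = b^{-r}\bigl(a x^m + a^{*}(x-b)^m\bigr), \qquad H^{*} = b^{-r}\bigl(a^{*} x^{m^{*}} + a (x-b)^{m^{*}}\bigr),
\end{equation*}
obtained from $a + a^{*} = b^r$ and \eqref{eq:48}. For (i), $\gcd(m, m^{*}) = 1$ since $m + m^{*} = r$ is a power of $p$ and $p \nmid m$. Next $H(0) = (-1)^m a^{*} b^{m-r}$ and $H(b) = a b^{m-r}$ are both nonzero, so $x$ and $x-b$ are coprime to $H$, and symmetrically to $H^{*}$. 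For squarefreeness, a common root $\alpha$ of $H$ and $H' = m b^{-r}(a x^{m-1} + a^{*}(x-b)^{m-1})$ satisfies $a^{*} b (\alpha-b)^{m-1} = 0$ after combining the two relations, hence $\alpha = b$, which is excluded. For $\gcd(H, H^{*}) = 1$, a common root $\alpha \notin \{0, b\}$ yields $((\alpha-b)/\alpha)^m = -a/a^{*}$ and $((\alpha-b)/\alpha)^{m^{*}} = -a^{*}/a$, hence $((\alpha-b)/\alpha)^r = 1$; since $x^r - 1 = (x-1)^r$ in characteristic $p$, this forces $b = 0$, contradiction.

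For (iii), (i) exhibits the multiset of roots of $f$ in the algebraic closure: $0$ and $b$ each with multiplicity $m m^{*}$, the $m$ roots of $H$ each with multiplicity $m$, and the $m^{*}$ roots of $H^{*}$ each with multiplicity $m^{*}$. I verify $m \neq m^{*}$ (otherwise $2m = r$ is a power of $p$, contradicting $p \nmid m$ and $m > 1$), so $m m^{*}$ is strictly larger than $m$ and $m^{*}$, and $b$ is determined as the unique nonzero root of maximum multiplicity. The sizes of the remaining two multiplicity classes then determine the unordered pair $\{m, m^{*}\}$, and once a labeling is chosen, $H$ is the monic polynomial with the multiplicity-$m$ roots, so $a$ is recovered via $H(b) = a b^{m-r}$. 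The two labelings give exactly $(a, b, m)$ and $(a^{*}, b, m^{*})$, and $m < r/2$ selects the unique one with $m < m^{*}$.

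For (ii) I first compute
\begin{equation*}
f' = m m^{*} a a^{*} b^{1-r} \cdot x^{m m^{*}-1} (x-b)^{m m^{*}-1} H^{m-1} (H^{*})^{m^{*}-1}
\end{equation*}
from $f = g \circ h$ via $g'(h) = -m a \cdot h^{m-1}(h-a)^{m^{*}-1}$ with $h - a = (x-b)^m H^{*}$, together with $h' = -m^{*} a^{*} b^{1-r}\, x^{m^{*}-1}(x-b)^{m-1}$ (the $rx$-term vanishes in characteristic $p$). A shift $f^{(w)} = f$ gives $f'(x+w) = f'(x)$, so the root multiset of $f'$ is invariant under $\alpha \mapsto \alpha + w$. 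The unique roots of maximum multiplicity $m m^{*}-1$ are $\{0, b\}$ (since $m, m^{*} \geq 2$ and $m \neq m^{*}$), forcing $\{w, b+w\} = \{0, b\}$, so $w = 0$, or $w = b$ with $2b = 0$. The residual characteristic-$2$ case is then ruled out by noting that shift-by-$b$ invariance of the set of roots of $H$ (disjoint from the others since $m-1 \neq m^{*}-1$) gives $H(x+b) = H(x)$; direct expansion forces $a = a^{*}$, hence $b^r = 2a = 0$, contradiction. Thus the stabilizer is trivial, and over $\Fq$ the orbit has size $q$.

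For (iv), if $f^{(w)}$ has the shape \eqref{eq:3normal}, then $0$ is a root of $f^{(w)}$ of multiplicity at least $4$ (the product of two factors each at least $2$), so $w$ is a multiple root of $f(x) - f(w)$ and $f'(w) = 0$; the factorization of $f'$ above places its roots among the roots of $f$, so $f(w) = 0$, and the multiplicity of $w$ in $f$ equals the multiplicity of $0$ in $f^{(w)}$, which is the maximum value in that multiset and hence $m m^{*}$. By (iii), this forces $w \in \{0, b\}$. Finally, $f(b) = 0$ gives $f^{(b)} = f(x+b)$, and substituting $H(x+b) = b^{-r}(a(x+b)^m + a^{*} x^m)$ and $H^{*}(x+b) = b^{-r}(a^{*}(x+b)^{m^{*}} + a x^{m^{*}})$ identifies this with $\MultConst{-a^{*}, -b, m}$, which by (ii) differs from $f = f^{(0)}$. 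I expect the main obstacle to be the characteristic-$2$ subcase of (ii): the maximum-multiplicity argument does not quite rule out $w = b$, and one must peel off the finer shift symmetry of $H$ to get the contradiction.
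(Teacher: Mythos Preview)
Your proof is correct. The arguments for \ref{item:-1}, \ref{item:0}, and \ref{item:2} follow the same root-multiplicity strategy as the paper, with only cosmetic differences: for coprimality of $H$ and $H^{*}$ you work directly with a putative common root and the identity $((\alpha-b)/\alpha)^{r}=1$, whereas the paper observes more slickly that $H\mid h$ and $H^{*}\mid h-a$ with $\gcd(h,h-a)=1$; and in \ref{item:2} you take a small detour through $f'(w)=0\Rightarrow f(w)=0$ before reading off $w\in\{0,b\}$, while the paper compares the maximum-multiplicity roots of $(f^{(w)})'$ and $f_{0}'$ directly.

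The genuine methodological difference is in \ref{item:1}. The paper proves triviality of the stabilizer by a coefficient computation: it isolates $f_{r^{2}-r-2}=g_{r-1}(h_{r-1}^{2}-h_{r-2})$ and checks that under shifting only $h_{r-2}$ moves, by $-wh_{r-1}$ with $h_{r-1}\neq 0$. This works uniformly in all characteristics in one line. You instead reuse the derivative factorization (which the paper reserves for \ref{item:2}): from $f^{(w)}=f$ you get $f'(x+w)=f'(x)$, and the two maximum-multiplicity roots $\{0,b\}$ of $f'$ must be shift-invariant, forcing $w=0$ or $w=b$ with $2b=0$. This leaves a residual characteristic-$2$ case, which you correctly close by showing that shift-invariance of the multiplicity-$(m-1)$ root set forces $H(x+b)=H(x)$, hence $a=a^{*}$ and $b=0$. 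Your approach is more structural and ties \ref{item:1} and \ref{item:2} together under a single tool, at the cost of the extra $p=2$ step; the paper's coefficient argument is shorter but ad hoc.
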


\begin{proof}
\ref{item:-1} If $d>1$ was a common divisor of $m$ and $m^*$, then $d \mid m + m^* = r$ and thus $d$ would be a power of $p$---in particular $p \mid m$, a contradiction. Thus $\gcd(m, m^*) = 1$.
From $m \qepol - x \dif{\qepol}  = m^{*} a^* b^{1-r} (x-b)^{m-1}$ and
$\qepol (0) \cdot \qepol ( b) \neq 0$, we find that $\qepol$ is
squarefree and coprime to $x(x-b)$, and similarly for $\qepol^{*}$.
Since $\qepol \mid h$, $\qepol^* \mid (h-a)$, and $\gcd(h, h-a) =1$, we have $\gcd(\qepol, \qepol^*) = 1$.

\ref{item:1} For the coefficient of $x^{r^2 - r- 2}$ in the composition $f = g \circ h$, we find
\begin{equation}
  \label{eq:6}
  f_{r^2-r-2} = g_{r-1} ( h_{r-1}^{2} - h_{r-2}),
\end{equation}
since $r>2$.  For the shifted composition $f^{[w]} = g^{[h(w)]} \circ h^{[w]}$, we have the coefficients
\begin{align}
  g_{r-1}^{[h(w)]} &= g_{r-1}= -m^* a\neq 0, \\
   h_{r-1}^{[w]} &= h_{r-1}= -m a^{*}(-b)^{1-r} \neq 0, \\
   h_{r-2}^{[w]} &= h_{r-2} - wh_{r-1}, \\
   f_{r^{2}-r-2}^{[w]} & = g_{r-1} ( h_{r-1}^{2} - h_{r-2} + wh_{r-1}).
\end{align}
Thus, $f_{r^2-r-2} = f_{r^2-r-2}^{[w]}$ if and only if $w=0$.

\ref{item:0}
Sufficiency is a direct computation.  Conversely, assume that $f =
\MultConst{a,b,m} =\MultConst{\alt{a},\alt{b},\alt{m}} = \alt{f}$.
From \ref{item:-1} and the multiplicity $mm^{*}$ of $0$ and $b$ in
$f$, we find $m m^*  = \alt{m} \alt{m^*}$ and $\alt{b} = b$; see
\eqref{eq:2}.  If necessary, we replace $(a,b,m)$ by $(a^{*}, b, m^*)$, and obtain $\alt{m} = m$.  Dividing $f$ and $\alt{f}$ by $x^{m m^*} (x-b)^{m m^*}$ yields $\qepol^m (\qepol^*)^{m^*} = \alt{\qepol^m} (\alt{\qepol^*})^{m^*} $ by \eqref{eq:2}. Hence by \ref{item:-1}, we find $\alt{\qepol} = \qepol$ and thus $\alt{a} = a$.

\ref{item:2}
We find $f^{[b]} = \MultConst{-a^*, -b, m}$ by a direct computation.
Conversely, we take $a_0$, $b_0$, $m_0$ as in \autoref{thm:constmulti} and assume that
$f^{[w]} = \MultConst{\alt{a},\alt{b},\alt{m}} = \alt{f}$.
By \ref{item:0}, we may assume that $m, \alt{m} < r/2$. We have
\begin{equation}
\label{eq:60}
\begin{split}
\dif{g} &= m^* a x^{m-1} (x-a)^{m^*-1}, \\
\dif{h} &= m a^* b^{1-r} x^{m^*-1} (x-b)^{m-1} , \\
\dif{f} &= (\dif{g} \circ h) \cdot \dif{h}\\
 &= mm^{*} a a^* b^{1-r} (x(x-b))^{m m^* -1} \qepol^{m-1} (\qepol^*)^{m^* -1}.
 \end{split}
 \end{equation}

Now \ref{item:-1} and $p \nmid m m^*$ show that $f'$ has roots of multiplicity $m m^* - 1$ exactly  at $0$ and $b$ and otherwise only roots of multiplicity at most $m^* -1 < mm^* -1$. Furthermore, $(f^{[w]})' =  \dif{f}(x+w)$ has roots of multiplicity $m m^* -1$ exactly at $-w$ and $b-w$.
Similarly, $\alt{f}$ has roots of multiplicity $\alt{m} \alt{m^*} -1$ at $0$ and $\alt{b}$, and all other roots have smaller multiplicity. It follows that $mm^* = \alt{m} \alt{m^*}$ and $m = \alt{m}$. Furthermore, one of $-w$ and $b -w$ equals $0$, so that $w \in \{0, b\}$. Hence $(a_0,b_0, m_0, w) \in \{ (a,b,m,0), (a^*,b,m^*,0), (-a^*, -b, m, b), (-a , -b, m^*, b)\}$. \qedhere
\end{proof}

We now provide the exact number of these collisions over $\Fq$, matching \autoref{fac:simply_count}.
When $r \leq 4$, there are no polynomials of the form \eqref{eq:3normal}.
\begin{corollary}
\label{cor:multi_count}
  For $r \geq 3$ and $F = \Fq$, the number of polynomials that are of the form \eqref{eq:3normal} or shifts thereof is
  \begin{equation}
    \label{eq:30}
   \frac{q(q-1)(q-2)(r-\frac{r}{p}-2)}{4}.
  \end{equation}
\end{corollary}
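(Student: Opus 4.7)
The plan is to count the polynomials orbit-by-orbit, using the uniqueness statements in Proposition~\ref{pro:multi_uniqueness}. Because original shifting is a group action of the additive group of $F$ on $P_{r^2}(F)$, the orbits of the polynomials of the form \eqref{eq:3normal} partition the set we want to count, and by part~\ref{item:1} each such orbit has size exactly $q$. It therefore suffices to count orbits; equivalently, count the polynomials of the form \eqref{eq:3normal} themselves and divide by the number of such polynomials per orbit.

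First I would count admissible parameter triples $(a,b,m)$. The parameter $b$ ranges over $\Fq^\times$, contributing a factor $q-1$. The parameter $a$ ranges over $\Fq \setminus \{0, b^r\}$, and since $b\neq 0$ forces $b^r\neq 0$, this gives exactly $q-2$ choices. For $m$, I need $1 < m < r-1$ with $p \nmid m$. Among $\{1,\dots,r-1\}$ the number of integers not divisible by $p$ is $r - r/p$ (since $r$ is a power of $p$), and both $m=1$ and $m=r-1$ are coprime to $p$, so excluding them leaves $r - r/p - 2$ admissible values of $m$. Hence the number of admissible triples is $(q-1)(q-2)(r - r/p - 2)$.

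Next, I would pass from triples to distinct polynomials. By Proposition~\ref{pro:multi_uniqueness}~\ref{item:0}, the map $(a,b,m) \mapsto \MultConst{a,b,m}$ is exactly two-to-one: each polynomial is hit by $(a,b,m)$ and by $(a^*,b,m^*)$, and these are distinct triples because $m \neq m^*$ (otherwise $2m=r$ would force $p=2\mid m$, contradicting $p\nmid m$). Thus the number $N$ of distinct polynomials of the form \eqref{eq:3normal} equals $(q-1)(q-2)(r-r/p-2)/2$.

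Finally, I would combine this with the orbit information. By Proposition~\ref{pro:multi_uniqueness}~\ref{item:2}, each orbit under original shifting contains exactly two polynomials of the form \eqref{eq:3normal}, so the number of distinct orbits is $N/2 = (q-1)(q-2)(r-r/p-2)/4$. Multiplying by the orbit size $q$ from \ref{item:1} yields the desired total
\begin{equation}
\frac{q(q-1)(q-2)(r-\tfrac{r}{p}-2)}{4}.
\end{equation}
No step looks like a genuine obstacle; the only place to be careful is the bookkeeping that ensures the two-to-one map from triples to polynomials and the two-element fibre in each shift orbit are accounted for exactly once, which is handled by the two divisions by $2$ above.
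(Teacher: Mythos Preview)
Your argument is correct and follows essentially the same route as the paper: count admissible triples $(a,b,m)$, divide by two using Proposition~\ref{pro:multi_uniqueness}~\ref{item:0} to get the number of distinct polynomials of the form \eqref{eq:3normal}, then use \ref{item:2} and \ref{item:1} to pass to shift orbits of size $q$. Your write-up merely spells out a few details (such as why $m\neq m^*$ and why exactly $r-r/p-2$ values of $m$ are admissible) that the paper leaves implicit.
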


\begin{proof}
  There are $q-1$, $q-2$, and $r-r/p-2$ choices for the parameters  $b$, $a$, and $m$, respectively.  By \autoref{pro:multi_uniqueness}~\ref{item:0}, exactly two distinct triples of parameters generate the same polynomial \eqref{eq:3normal}. By \ref{item:1}, the shift orbits are of size $q$ and by \ref{item:2}, they contain two such polynomials each.
\end{proof}

Over a field $\ff$ of characteristic $p >0$, \autoref{algo:recover-multi-paras-infinite-fields} finds the parameters for polynomials that are  original shifts of \eqref{eq:3normal}, just as \autoref{algo:recover-simple-paras} does for original shifts of \eqref{eq:7}.  It involves conditional divisions and routines for extracting $p$th and square roots.  Given a field element, the latter produce a root, if one exists, and ``failure'' otherwise.  If $\ff$ is finite, then every element has a $p$th root.
The algorithm for a square root yields a subroutine to determine the set of roots of a quadratic polynomial.

\begin{algorithm2f}
\caption{Identify multiply original polynomials}
\label{algo:recover-multi-paras-infinite-fields}
\SetKw{Or}{or}
\KwIn{a polynomial $f \in P_{r^{2}}(\ff)$ with $r$ a power of $p = \chara \ff$}
\KwOut{parameters $a, b, m$, as in \autoref{thm:constmulti}, and $w \in \ff$ such that $f = \MultConst{a, b, m}^{[w]}$, if such values exist, and ``failure'' otherwise}

$\fnull \gets f' / \operatorname{lc}(f')$ if defined \; \label{step:i-1}
\lIf{$p=2$}{$\fnull \gets \fnull^{1/2}$ if defined} \label{step:i00}
$\feins \gets \fnull/\gcd(\fnull, \fnull')$ if defined \; \label{step:i01}
\lIf{$\deg \feins < 4$ \Or $\deg \feins > r + 2$}{\KwRet{``failure''}} \label{step:i100}
determine the maximal $k$ such that $\feins^{k} \mid \fnull$ via the generalized Taylor expansion of $\fnull$ in base $\feins$ \;  \label{step:i02}
\lIf{$p=2$}{$k \gets 2k$} \label{step:i06}
$m \gets \min\{k+1, r-k-1\}$\; \label{step:i03}
\lIf{$m < 2$}{\KwRet{``failure''}} \label{step:i101}
\eIf{$p = 2$ or $p \nmid m^{2}+1$}{ \label{step:i07}
  $\fzwei \gets \gcd( \feins^{r-m}, \fnull) / \gcd (\feins^{r-m-1}, \fnull)$\; \label{step:i05}
}{
  $\fdrei \gets \fnull/ \gcd (\feins^{r-m-1}, \fnull)$ if defined\; \label{step:i12}
  determine the maximal $\ell$ such that $p^{\ell}$ divides every exponent of $x$ with nonzero coefficient in $\fdrei$ \; \label{step:i17}
  $\fdrei \gets \fdrei^{1/p^{\ell}}$ if defined \; \label{step:i13}
  $\fzwei \gets \fdrei/\gcd(\fdrei,\fdrei')$ \; \label{step:i14}
}

\lIf{$\deg \fzwei \neq 2$}{\KwRet{``failure''}} \label{step:i15}
compute the set $X$ of roots of $\fzwei$ in $\ff$\; \label{step:i21}
\lIf{$\# X < 2$}{\KwRet{``failure''}} \label{step:i24}
write $X$ as $\{x_1, x_2\}$ and set $b \gets x_{2} - x_{1}$ and $w \gets -x_{1}$\; \label{step:i22}
compute the set $A$ of roots of $ y^2 - b^r y - m^{-2} b^{r-1}
\operatorname{lc}(f') \in \ff[y]$ in $\ff$\; \label{step:i23}
\For{$a \in A$}{
  \If{$f = \MultConst{a,b,m}^{[w]}$}{ \label{step:i10}
    \KwRet{$a ,b ,m, w$}
  }
}
\KwRet{``failure''}
\end{algorithm2f}
\begin{theorem}
\label{thm:algo2infcorrect}
\autoref{algo:recover-multi-paras-infinite-fields} works correctly as specified.  If $F=\Fq$, it takes $O(\MM(n)\log n + n \log q)$ field operations on input a polynomial of degree $n = r^2$.
\end{theorem}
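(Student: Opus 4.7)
The plan is a step-by-step trace of the algorithm on a valid input $f = \MultConst{\bar{a},\bar{b},\bar{m}}^{(\bar{w})}$ in the normalization $\bar{m} < r/2$ of \autoref{pro:multi_uniqueness}~\ref{item:0}, showing that the returned parameters equal $(\bar{a},\bar{b},\bar{m},\bar{w})$ or the twin representative $(-\bar{a}^*,-\bar{b},\bar{m},\bar{w}+\bar{b})$ from \autoref{pro:multi_uniqueness}~\ref{item:2}. Inputs not of this form are rejected by the equality test in step~\ref{step:i10}, which admits no false positives. The central object is the factorization $f' = \bar{m}\bar{m}^*\bar{a}\bar{a}^*\bar{b}^{1-r}\,(x(x-\bar{b}))^{\bar{m}\bar{m}^*-1}\bar{H}^{\bar{m}-1}(\bar{H}^*)^{\bar{m}^*-1}$ from \eqref{eq:60} (shifted by $\bar{w}$), whose four irreducible constituents are pairwise coprime and squarefree by \autoref{pro:multi_uniqueness}~\ref{item:-1}.

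Steps~\ref{step:i-1}--\ref{step:i03} recover $\bar{m}$. After normalization and, if $p=2$, extraction of a square root (legitimate since every multiplicity of $f'$ is even in that case), $f_0$ is a product of known powers of the four coprime factors, so the standard squarefree routine in step~\ref{step:i01} retains exactly those factors whose exponent is nonzero modulo $p$. The case split in step~\ref{step:i07} is forced by the identity $\bar{m}\bar{m}^* \equiv -\bar{m}^2 \pmod p$ (since $\bar{m}+\bar{m}^* = r \equiv 0$): the factors $x-\bar{w}$ and $x-\bar{w}+\bar{b}$ survive in $f_1$ iff $p \nmid \bar{m}^2+1$, which triggers the easy branch. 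A short case analysis---using that the three conditions $p \mid \bar{m}-1$, $p \mid \bar{m}^*-1$, $p \mid \bar{m}\bar{m}^*-1$ can coexist in odd characteristic only one at a time---shows that $k$ at step~\ref{step:i02} equals $\bar{m}-1$ (halved by the prior square root when $p=2$, which step~\ref{step:i06} doubles back), so step~\ref{step:i03} returns $\bar{m}$ correctly.

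Steps~\ref{step:i05}--\ref{step:i23} recover $\bar{b}$, $\bar{w}$, and $\bar{a}$. I would verify $f_2 = (x+\bar{w})(x+\bar{w}-\bar{b})$ in both branches: the easy branch (step~\ref{step:i05}) extracts the part of $f_0$ of multiplicity $\geq \bar{m}^*$, which by the factorization is precisely this quadratic; the hard branch (steps~\ref{step:i12}--\ref{step:i14}) first strips multiplicities up to $\bar{m}^*-1$ to obtain $f_3 = ((x+\bar{w})(x+\bar{w}-\bar{b}))^{\bar{m}^*(\bar{m}-1)}$, then extracts a $p^{\ell}$th root (needed because $p \mid \bar{m}-1$ is compatible with this branch, while $p \nmid \bar{m}^*$ is automatic from $p \mid r$ and $p \nmid \bar m$) and a squarefree part to recover the same quadratic. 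From the roots of $f_2$ one reads off $\bar{b}$ and $\bar{w}$ (up to the swap giving the twin representative of \autoref{pro:multi_uniqueness}~\ref{item:2}), and using $\operatorname{lc}(f') = \bar{m}\bar{m}^*\bar{a}\bar{a}^*\bar{b}^{1-r}$ together with $\bar{m}^* \equiv -\bar{m} \pmod p$, the quadratic of step~\ref{step:i23} simplifies to $y^2 - \bar{b}^r y + \bar{a}\bar{a}^*$ with roots $\bar{a}$ and $\bar{a}^*$; the loop around step~\ref{step:i10} tries both and returns the successful one.

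For the complexity over $\FF_q$, the polynomial gcds, the generalized Taylor expansion at step~\ref{step:i02}, and the final composition-plus-comparison at step~\ref{step:i10} each run in $O(\MM(n)\log n)$; the $p$th and square roots use the Frobenius identity $c^{1/p} = c^{q/p}$ at cost $O(n\log q)$; the quadratic root extractions contribute only $O(\log q)$. These sum to the claimed $O(\MM(n)\log n + n\log q)$. The main obstacle is the characteristic-sensitive case split: when $p \mid \bar{m}\bar{m}^*-1$ the naive squarefree part of $f_0$ loses precisely the quadratic needed to pin down $\bar{b}$ and $\bar{w}$, and one must carefully check that the alternative pipeline through $f_3$ and the $p^\ell$th root reconstructs this quadratic without relying on properties that fail in characteristic $p$, while also confirming that the $\deg f_1$ bounds in step~\ref{step:i100} and the $m \geq 2$ check in step~\ref{step:i101} cover all the subcase patterns arising from the possible $p$-divisibilities of the three exponents.
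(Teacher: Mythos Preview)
Your approach mirrors the paper's, and the correctness and cost arguments are structured the same way. There are, however, two concrete slips in your trace.

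First, after step~\ref{step:i02} the value $k$ need not equal $\bar m-1$. When $p\mid\bar m-1$ (so $\varepsilon=0$ in the paper's notation), the factor $H_0$ drops out of $f_1$, and the smallest multiplicity in $f_0$ among the surviving factors of $f_1$ is $r-\bar m-1$; hence $k=r-\bar m-1$ in that subcase. Step~\ref{step:i03} still recovers $\bar m$ via the $\min$, but your case analysis must cover both values of $k$.

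Second, the hard branch is mischaracterized. The branch condition in step~\ref{step:i07} reads $\delta=1$ iff $p=2$ or $p\nmid\bar m^2+1$, so the hard branch is $\delta=0$, i.e.\ $p>2$ and $p\mid\bar m\bar m^*-1$. Your own ``one at a time'' observation then forces $\varepsilon=\varepsilon^*=1$, hence $p\nmid\bar m-1$ and $p\nmid\bar m^*-1$---the opposite of what your parenthetical asserts. In particular $f_1=H_0H_0^*$ contains no $\varphi$-factor, and step~\ref{step:i12} yields $f_3=\varphi^{\bar m\bar m^*-1}$, not $\varphi^{\bar m^*(\bar m-1)}$. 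The $p^\ell$th root in step~\ref{step:i13} is needed precisely because the exponent $\bar m\bar m^*-1$ is divisible by $p$ in this branch, not because of any divisibility of $\bar m-1$.

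With these two corrections your outline agrees with the paper's proof.
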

\begin{proof}
For the correctness, it is sufficient---due to the check in \autoref{step:i10}---to show that for $\alt{a}, \alt{b}, \alt{m}$ as in \autoref{thm:constmulti} and $\alt{w} \in \ff$, the algorithm does not return ``failure''  on input $f = \MultConst{\alt{a}, \alt{b}, \alt{m}}^{[\alt{w}]}$.
As remarked after \autoref{thm:constmulti}, we have $r \geq 5$ and by \autoref{pro:multi_uniqueness}~\ref{item:0}, we may assume $\alt{m}<r/2$.
Furthermore, \eqref{eq:60} determines $\operatorname{lc}(f') \neq 0$ explicitly and \autoref{step:i-1} is defined. The square root in step~\ref{step:i00} is defined, since for $p=2$, $m_0$ and $r - m_0$ are odd and all exponents in the monic version of \eqref{eq:60} are even.

By \eqref{eq:60} and \autoref{pro:multi_uniqueness}~\ref{item:-1}, we have after steps~\ref{step:i-1} and \ref{step:i00}
\begin{equation}
  \label{eq:63}
  \fnull = \begin{cases*}
\varphi^{\alt{m}(r-\alt{m})-1} H_{0}^{\alt{m}-1} {H_{0}^{*}}^{r-\alt{m}-1} & if $p>2$, \\
\varphi^{(\alt{m}(r-\alt{m})-1)/2} H_{0}^{(\alt{m}-1)/2} {H_{0}^{*}}^{(r-\alt{m}-1)/2} & if $p=2$,
\end{cases*}
\end{equation}
with $\varphi = (x+\alt{w})(x-b_{0}+\alt{w})$, $H_{0} = H \circ (x+\alt{w})$, $H_{0}^{*} = H^{*} \circ (x+\alt{w})$, and $H$ and $H^{*}$ as in \eqref{eq:48} with $\alt{a}, \alt{a^*}, \alt{b}, \alt{m}, \alt{m^*}$ instead of $a, a^*, b, m, m^*$, respectively. By \autoref{pro:multi_uniqueness}~\ref{item:-1}, these three polynomials are squarefree and pairwise coprime.   Let $\delta$, $\varepsilon$, $\varepsilon^{*}$ be 0 if $p$ divides the exponent of $\varphi$, $H_{0}$, $H_{0}^{*}$, respectively, in \eqref{eq:63}, and be 1 otherwise.  Then
\begin{equation}
  \gcd (\fnull, \fnull') = \begin{cases*}
\varphi^{\alt{m}(r-\alt{m})-1-\delta} H_{0}^{\alt{m}-1-\varepsilon} {H_{0}^{*}}^{r-\alt{m}-1-\varepsilon^{*}} & if $p>2$, \\
\varphi^{(\alt{m}(r-\alt{m})-1)/2-\delta} H_{0}^{(\alt{m}-1)/2-\varepsilon} {H_{0}^{*}}^{(r-\alt{m}-1)/2-\varepsilon^{*}} & if $p=2$.
\end{cases*}
\end{equation}
This gcd is nonzero, and \autoref{step:i01} computes
\begin{equation}
\label{eq:8}
 \feins = \fnull / \gcd(\fnull, \fnull') = \varphi^{\delta} H_{0}^{\varepsilon} {H_{0}^{*}}^{\varepsilon^{*}}.
\end{equation}
We have
\begin{equation}
  \label{eq:62}
  \delta = \begin{cases*}
1 & if $p=2$ or $p \nmid \alt{m}^{2}+1$, \\
0 & otherwise.
\end{cases*}
\end{equation}
For odd $p$, this follows from $\alt{m}(r-\alt{m})-1 \equiv
-\alt{m}^{2}-1 \bmod p$, and for $p=2$ from $4 \nmid \alt{m^{2}}+1$.
The sum of the exponents of $H_{0}$ and $H_{0}^{*}$ in \eqref{eq:63} is $r-2$ for odd $p$ and $r/2-1$ for $p=2$.  In either case, it is coprime to $p$ and at least one of $\varepsilon$ and $\varepsilon^*$ equals $1$.  If $p>2$ and $\varepsilon=0$, then $\alt{m} \equiv 1 \bmod p $, and thus $\alt{m}^{2} \equiv 1 \bmod p$.  Hence $p \nmid \alt{m}^{2}+1$ and $\delta=1$.  Similarly, $\varepsilon^{*} =0$ implies $\delta=1$, and we find that at least two of $\delta$, $\varepsilon$, and $\varepsilon^{*}$ take the value 1. This also holds for $p=2$.

Since $\deg \varphi = 2$, $\deg H_{0}, \deg H_{0}^{*} \geq 2$, and $\deg H_0 + \deg H_0^* = r$, this implies $4 \leq \deg \feins \leq r+2$ and \autoref{step:i100} does not return ``failure''. The exponents in \eqref{eq:63} satisfy $m_0 -1 < r - m_0 - 1 < m_0(r - m_0) - 1$. If $p > 2$, then $k$ as determined in \autoref{step:i02} equals $\alt{m}-1$ if $\varepsilon=1$, and $r-\alt{m}-1$ otherwise.  In characteristic $2$, \autoref{step:i06} modifies $k \in \{(\alt{m}-1)/2, (r-\alt{m}-1)/2\}$, so that in any characteristic, \autoref{step:i03} recovers $m = \alt{m} \geq 2$ and \autoref{step:i101} does not return ``failure''.

The condition in \autoref{step:i07} reflects the case distinction in \eqref{eq:62}.
\begin{itemize}
\item If the condition holds, we have $\delta = 1$ and
  \begin{align}
    \gcd (\feins^{r-m}, \fnull) & = \varphi^{r-m} H_{0}^{\varepsilon(m-1)} {H_{0}^{*}}^{\varepsilon^{*}(r-m-1)}, \\
    \gcd (\feins^{r-m-1}, \fnull) & = \varphi^{r-m-1} H_{0}^{\varepsilon(m-1)} {H_{0}^{*}}^{\varepsilon^{*}(r-m-1)},
  \end{align}
and therefore $\fzwei = \varphi$ in \autoref{step:i05}.
\item Otherwise, we have $\delta = 0$, $p>2$, $\varepsilon = \varepsilon^{*} = 1$,
  \begin{align}
\fnull & = \varphi^{m(r-m)-1} H_{0}^{m-1} {H_{0}^{*}}^{r-m-1}, \\
\gcd (\feins^{r-m-1}, \fnull) & = H_{0}^{m-1} {H_{0}^{*}}^{r-m-1},
  \end{align}
and $\fdrei = \varphi^{m(r-m)-1}$ in \autoref{step:i12}.  After \autoref{step:i13}, we have $\fdrei = \varphi^{e}$ for some $e$ with $p \nmid e$ and $\fzwei=\varphi^{e} / \varphi^{e-1} = \varphi$ in \autoref{step:i14}.
\end{itemize}

In any case, we have $\fzwei = (x+\alt{w})(x-\alt{b}+\alt{w})$ with
distinct roots $-\alt{w}$ and $\alt{b}-\alt{w}$ in $\ff$, and
steps~\ref{step:i15}, \ref{step:i21}, and \ref{step:i24} do not return
``failure''.
We determine $a$, $b$, and $w$ in steps \ref{step:i22}--\ref{step:i10}.
In \autoref{step:i22}, we have $(b,w) \in \{(\alt{b},\alt{w}), (-\alt{b},
\alt{w} - \alt{b})\}$, depending on the choice of the order of $x_1$ and $x_2$.
 Since $f= \MultConst{\alt{a}, \alt{b}, m}^{[\alt{w}]} =
\MultConst{\alt{b}^r-{\alt{a}}, -\alt{b}, m}^{[\alt{w}-\alt{b}]}$ according
to \autoref{pro:multi_uniqueness}~\ref{item:2}, we have $f =
\MultConst{\bar{a}, b, m}^{[w]}$ for some $\bar{a} \in \{a_{0},
\alt{b}^r-{\alt{a}}\}$.  The leading coefficient of $f'$ is $-
m^{2}\bar{a}b^{1-r}(b^{r}-\bar{a})$ by \eqref{eq:60} yielding a quadratic
polynomial in $\ff[y]$ with roots $\bar{a}$ and $b^{r}-\bar{a}$ for
\autoref{step:i23}.  There, we find $A =  \{\bar{a}, b^{r}-\bar{a}\}$ and \autoref{step:i10} identifies $\bar{a}$.

For the cost over $\ff = \Fq$, the conditions in steps~\ref{step:i100} and \ref{step:i101} ensure that all powers of $\feins$ in the gcd computations of steps~\ref{step:i05} and \ref{step:i12} have degree at most $(r+2)(r-2)<n$ and we have $O(\MM(n) \log n)$ field operations for the quotients, gcds, and products in steps~\ref{step:i-1}, \ref{step:i01}, \ref{step:i05}, \ref{step:i12}, \ref{step:i14}, and \ref{step:i10}.
The $\feins$-adic expansion of $\fnull$ is a sequence $a_0, \ldots,
a_{\nu-1}  \in \Fq[x]$ such that $\fnull = \sum_{0 \leq i < \nu} a_i
\feins^i$ and $\deg a_i < \deg \feins$ for all $ i < \nu$.  We may
bound $\nu$ by the smallest power of $2$ greater than $ \deg \fnull /
\deg \feins $. Then $\nu < 2 \deg \fnull / \deg \feins$ and for $k$ in
\autoref{step:i02} we have $k+1 = \min \{ 0 \leq i < \nu \colon a_i
\neq 0\}$. We can compute the expansion with $O(\MM(\nu \deg
\feins)\log \nu)$ field operations; see \citet[Theorem
9.15]{gatger13}.  %
Thus the cost of \autoref{step:i02} is $O(\MM(n) \log n)$ field operations. The calculation of the right-hand side in \autoref{step:i10} takes $O(\MM(n) \log n)$ field operations, by first substituting $x+w$ for $x$ in  $\MultConst{a,b,m}$ as in \eqref{eq:3normal}, then computing its coefficients and leaving away the constant term.
We ignore the (cheap) operations on integers in the various tests, in \autoref{step:i17}, and the computation of derivatives in steps~\ref{step:i-1}, \ref{step:i01}, and \ref{step:i14}.
 The polynomial square root in \autoref{step:i00} and the $p^{\ell}$th root in \autoref{step:i13} take $O(n \log q)$ field operations each using $u^{q^{c}/p^{\ell}} = u^{1/p^{\ell}}$ for $u \in \Fq$ and the smallest $c \geq 1$ with $q^{c} \geq p^{\ell}$.
Taking the square roots in steps~\ref{step:i21} and \ref{step:i23} can
be done deterministically
by first reducing the computations to the
prime field $\Fp$, see \citet[Exercise 14.40]{gatger13}, %
and then
finding square roots in $\Fp$ by exhaustive search. These take $O(\log
q)$ and $O(\sqrt{n})$ field operations, respectively, since $n=r^2$ is a power of $p$.
\end{proof}

\section{Root multiplicities in collisions}
\label{sec:algebra}

In this section we describe the structure of root multiplicities in collisions over an algebraic closure of $F$ under certain conditions. In \autoref{sec:classification} these results will be used for the classification of 2-collisions at degree $p^2$.  For the classification, its proof, and the lemmas in this section, we follow ideas of \cite{dorwha74} and \cite{zan93}; an earlier version can be found in \cite{bla11}.

After some general facts about root multiplicities, we state an assumption on 2-collisions (\autoref{assum:a}) under which we determine the root multiplicities of their components (\autoref{thm:class2}).
In \autoref{ex:assumconst} we see that this assumption holds for the 2-collisions in \autoref{thm:nonadd} and in \autoref{thm:constmulti}. Then we recall the well-known relation between decompositions of polynomials and towers of rational function fields. We reformulate a result by \cite{dorwha74} about the ramification in such fields in the language of root multiplicities of polynomials (\autoref{prop:rammulti}) and derive further properties about the multiplicities in collisions for which \autoref{assum:a} holds.

We use the following notation. Let $\ff$ be a field of characteristic $p>0$ and $\kk = \overline{F}$ an algebraic closure of $\ff$.
For a nonzero polynomial $f \in F[x]$ and $b \in \kk$, let $\mult_b (f)$ denote the \emph{root multiplicity} of $b$ in $f$, so that  $f = (x - b)^{\mult_b (f)} u$ with $u \in \kk[x]$ and $u(b) \neq 0$.
For $c \in K$, we denote as $\Van{f}{c}$ the set of all $b \in
K$ such that $f(b) =c$.

\begin{lemma}
\label{lem:multmalmult}
Let $f = g \circ h \in P_n(\ff)$ and $c \in \kk$.  Then
\begin{equation}
\Van{f}{c} = \dot{\, \smashoperator{\bigcup_{a \in \Van{g}{c}}}\,} \Van{h}{a}
\end{equation}
is a partition of $\Van{f}{c}$, and for all $b \in \Van{f}{c}$, we have
\begin{equation}
\label{eq:70}
\mult_b (f-c) = \mult_{h(b)}(g-c) \cdot \mult_b (h -h(b)).
\end{equation}
\end{lemma}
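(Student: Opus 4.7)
The plan is to prove both statements by direct manipulation of the definitions, using only the fact that $K$ is algebraically closed and that root multiplicities are multiplicative under factorization.

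For the first claim, I would observe that $b \in V(f-c)$ iff $g(h(b)) = c$ iff $h(b) \in V(g-c)$, which is exactly the statement that $b \in V(h-a)$ for some (unique) $a = h(b) \in V(g-c)$. Disjointness of the union is immediate: if $b$ lies in both $V(h - a_1)$ and $V(h - a_2)$, then $a_1 = h(b) = a_2$.

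For the multiplicity formula, let $b \in V(f-c)$ and set $a = h(b)$ and $\mu = \mult_{a}(g-c)$. By definition, we can factor in $K[y]$
\begin{equation}
g - c = (y - a)^{\mu}\, \tilde{g}, \qquad \tilde{g}(a) \neq 0.
\end{equation}
Substituting $y = h$ gives
\begin{equation}
f - c = (h - a)^{\mu}\, \tilde{g}(h) \quad \text{in } K[x].
\end{equation}
Now let $\nu = \mult_{b}(h - a)$ and write $h - a = (x-b)^{\nu}\, u$ with $u \in K[x]$ and $u(b) \neq 0$. Then
\begin{equation}
f - c = (x-b)^{\mu \nu}\, u^{\mu}\, \tilde{g}(h).
\end{equation}
The claim $\mult_{b}(f-c) = \mu \nu$ follows once I verify that the remaining factor is nonzero at $x=b$. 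But $u^{\mu}(b) = u(b)^{\mu} \neq 0$, and $\tilde{g}(h)$ evaluated at $b$ equals $\tilde{g}(h(b)) = \tilde{g}(a) \neq 0$ by the choice of $\tilde{g}$. Hence $(x-b) \nmid u^{\mu}\tilde{g}(h)$, giving the stated equality.

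The argument is essentially bookkeeping of factorizations, so no step should present a real obstacle; the only subtlety worth flagging explicitly is that the definition of $\mult_{b}(h - h(b))$ uses the substitution $a = h(b)$, which is precisely what makes the factor $\tilde{g}(h)$ nonvanishing at $b$.
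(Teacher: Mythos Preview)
Your proof is correct and follows essentially the same approach as the paper: both establish the set equality by the equivalence $f(b)=c \iff h(b)\in V(g-c)$ with disjointness from $a=h(b)$, and both prove the multiplicity formula by factoring $g-c=(y-a)^{\mu}\tilde g$ and $h-a=(x-b)^{\nu}u$, then checking that $u(b)^{\mu}\,\tilde g(a)\neq 0$. The only differences are notational.
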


The partition of $\Van{f}{c}$ from \autoref{lem:multmalmult} is illustrated in \autoref{fig:part}, where we write $\Van{g}{c} =\{a_0, a_1, a_2, \dots \}$ and $h^{-1}(a_i) = \{a_{i0},a_{i1},a_{i2},\dots\}$ for $i \geq 0$.

\begin{figure}
  \centering
\begin{tikzpicture}[
map/.style={thick, ->, >=stealth'},
scale = .7
]

\begin{scope}
\tikzstyle{every node}=[pos=0.5]

\draw (-4,6) rectangle ++(1,1) node[pos=0.5] {$b_{00}$} rectangle
++(-1,1) node {$b_{01}$} rectangle ++ (1,2) node {$b_{02}$} rectangle
++ (-1,1) node {$\vdots$};
\draw (-3,6) rectangle ++(3,1) node {$b_{10}$} rectangle ++(-3,2) node
{$b_{20}$} rectangle ++ (3,2) node {$\vdots$};
\draw (0,6) rectangle ++(3,2) node {$b_{20}$} rectangle ++(-3,3) node {$\vdots$};
\draw (3,6) rectangle ++(2,5) node {$\iddots$};

\draw[map] (-3.5,6) -- (-3.5,5) node[pos=0.5,left] {$h$};
\draw[map] (-1.5,6) -- (-1.5,5);
\draw[map] (1.5,6) -- (1.5,5);
\draw[map] (4,6) -- (4,5);

\draw (-4,4) rectangle (-3,5) node[pos=0.5] {$a_{0}$};
\draw (-3,4) rectangle (0,5) node[pos=0.5] {$a_{1}$};
\draw (0,4) rectangle (3,5) node[pos=0.5] {$a_{2}$};
\draw (3,4) rectangle (5,5) node[pos=0.5] {$\dots$};

\draw[map] (-3.5,4) -- (0.5,3) node[pos=0.5,below] {$g$};
\draw[map] (-1.5,4) -- (0.5,3);
\draw[map] (1.5,4) -- (0.5,3);
\draw[map] (4,4) -- (0.5,3);

\draw (0,2) rectangle (1,3) node[pos=0.5] {$c$};
\end{scope}

\node[rotate=45] at (-3,12) {$h^{-1}(a_{0})$};
\node[rotate=45] at (-1,12) {$h^{-1}(a_{1})$};
\node[rotate=45] at (2,12) {$h^{-1}(a_{2})$};
\node[align=center] at (-7,8.5) {$h^{-1}(g^{-1}(c))$ \\ $ = f^{-1}(c)$};
\node at (-7,4.5) {$g^{-1}(c)$};
\node at (-7,2.5) {$c \in K$};
\end{tikzpicture}
\caption{Partition of $\Van{f}{c}$} \label{fig:part}
\end{figure}

\begin{proof}
Let $b \in \bigcup_{a \in \Van{g}{c}} \Van{h}{a}$ and $a \in \Van{g}{c}$ such that $b \in \Van{h}{a}$. Hence $f(b) = g(h(b)) = g(a) = c$ and thus $b \in \Van{f}{c}$. On the other hand, let $b \in \Van{f}{c}$ and set $a = h(b)$. Then $b \in \Van{h}{a}$ and $a \in \Van{g}{c}$, since $g(a) = g(h(b)) = c$. Hence $b \in \bigcup_{a \in \Van{g}{c}} \Van{h}{a}$.  Moreover if $b \in \Van{h}{a} \cap \Van{h}{a_{0}}$ for some $a$, $a_0 \in \kk$, then $a = h(b) = a_0$.

For \eqref{eq:70}, let $b\in \Van{f}{c}$, $a = h(b)$, $e = \mult_a(g-c)$, and $\alt{e}=\mult_b (h -a)$. Then $g-c = (x - a)^e G$ and $h-a= (x -b)^{\alt{e}}H$ for some $G$, $H \in \kk[x]$ with $G(a)\cdot H(b) \neq 0$. Thus $f-c = g(h) - c = (h - a)^e G(h) = ( (x-b)^{\alt{e}} H )^e G(h) = (x-b)^{e{\alt{e}}} H^e G(h)$ with $(H^e G(h)) (b) = H(b)^e G(a) \neq 0$.
\end{proof}

\begin{lemma}
\label{lem:trivial2}
Let $f\in\kk[x]$ and $b \in \kk$.
Then $b$ is a root of $f'$ if and only if there is some $c\in \kk$ with $\mult_b (f-c) > 1$. Moreover, for any $c \in K$ with $p \nmid \mult_b (f-c)$, we have $\mult_b (f') =\mult_b (f- c) -1$.
\end{lemma}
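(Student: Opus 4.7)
The plan is to get everything out of the single factorization $f - c = (x-b)^{e} u$ with $u \in K[x]$ and $u(b) \neq 0$, where $e = \mult_b(f-c)$, and then differentiate.

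More concretely, I would first fix a candidate $c \in K$ and set $e = \mult_b(f-c)$, so that $f - c = (x-b)^{e} u$ with $u(b) \neq 0$. Since $(f - c)' = f'$, the product rule gives
\begin{equation}
f' = e(x-b)^{e-1} u + (x-b)^{e} u' = (x-b)^{e-1}\bigl(e u + (x-b) u'\bigr).
\end{equation}
For the ``moreover'' part, take $c \in K$ with $p \nmid e$ and $e > 1$. Evaluating the second factor at $b$ yields $e u(b) \neq 0$, since $p \nmid e$ and $u(b) \neq 0$. Hence $\mult_b(f') = e - 1 = \mult_b(f-c) - 1$, as claimed.

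For the first equivalence, I apply this identity with the natural choice $c = f(b)$, so that $e = \mult_b(f - f(b)) \geq 1$. If $e \geq 2$, then the displayed formula shows $(x-b) \mid f'$, giving $f'(b) = 0$; conversely, if $e = 1$, then $f' = u + (x-b)u'$, so $f'(b) = u(b) \neq 0$. This proves that $f'(b) = 0$ forces $\mult_b(f - f(b)) > 1$, and conversely $\mult_b(f-c) > 1$ for some $c \in K$ forces $c = f(b)$ (since $(x-b) \mid (f-c)$) and then $f'(b) = 0$ by the same formula.

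The calculation is entirely routine; the only mild point is to remember that the hypothesis $p \nmid e$ is exactly what prevents the term $e u(b)$ in the constant term $(eu + (x-b)u')(b)$ from vanishing, which is why that hypothesis appears in the second assertion but is not needed for the qualitative equivalence in the first.
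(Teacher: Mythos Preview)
Your proof is correct and follows essentially the same approach as the paper: both arguments write $f-c=(x-b)^{e}u$ with $u(b)\neq 0$, differentiate to obtain $f'=(x-b)^{e-1}(eu+(x-b)u')$, and read off all three claims from this identity. Your organization is slightly more streamlined in that you derive the formula once and specialize, whereas the paper handles the forward implication separately via the $e=1$ case, but the content is identical.
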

\begin{proof}
Let $b$ be a root of $f'$ and set $c = f(b)$. Then $b$ is a root of $f - c$. We write $f - c= (x - b) u$ for some $u \in \kk[x]$. Then $f' = (f-c)'  = u + (x-b)u'$, and thus $u(b) = f'(b) =0$. Hence $b$ is a multiple root of $f-c$.

Now, let $c \in K$ with $e = \mult_b(f-c)$. Then $f-c = (x -b)^e u$
for some $u\in\kk[x]$ with $u(b) \neq 0$ and $f' = (f-c)' =
(x-b)^{e-1} ( e u + (x-b)u')$.  Thus, $b$ is a root of $f'$ if $e >
1$.  This proves the converse.  Moreover, if $p \nmid e$ then  $( e u + (x-b)u') (b) = e u(b) \neq 0$ and hence $\mult_b (f') =e -1$.
\end{proof}

We use the following proposition. The second part was stated as Proposition 6.5 (i) in \citet*{gatgie10a} for $\ff = \Fq$.

\begin{proposition}
\label{prop:2degree}
Let $r$ be a power of $p$ and $f \in P_{r^{2}}(\ff)$ have a $2$-collision
$C$ such that $\deg g = \deg h = r$ and $g' h' \neq 0$ for all $(g,h) \in C$.  Then $f' \neq 0$ and the following hold.
\begin{ronumerate}
\item \label{item:8} There are integers $d_1$ and $d_2$ such that $\deg g'  = d_1$ and $\deg h' = d_2$ for all $(g,h) \in C$.
\item \label{item:9} Furthermore, if $r = p$, then $d_1 = d_2$.
\end{ronumerate}
\end{proposition}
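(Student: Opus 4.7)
For part (i), the chain rule $f' = (g' \circ h) \cdot h'$ gives $\deg f' = r \deg g' + \deg h'$. Since $h$ is monic of degree $r = p^e$, a power of the characteristic, we have $(x^r)' = 0$, so $\deg h' \leq r - 2 < r$. Therefore the equation $\deg f' = d_1 r + d_2$ with $0 \leq d_2 < r$ is the unique division-with-remainder expression, and $d_1 = \deg g'$ and $d_2 = \deg h'$ are uniquely determined by $\deg f'$, which depends only on $f$.

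For part (ii), with $r = p$, set $\alpha = d_1$ and $\beta = d_2$ (both at most $p - 2$). Choose distinct decompositions $(g, h), (g^*, h^*) \in C$ and let $D = g^* - g$ and $E = h - h^*$; both lack a constant term (all components are monic original), and $E \neq 0$ by \autoref{lem:gunique}. Starting from $g(h) = g^*(h^*)$ and using the identity $h^p - (h^*)^p = E^p$ in characteristic $p$, a short computation yields the key identity
\begin{equation*}
D(h) = E^p + E \tilde R^*, \quad \text{where} \quad \tilde R^* = \sum_{k=1}^{p-1} g^*_k Q_k(h, h^*), \quad Q_k(y, z) = y^{k-1} + y^{k-2} z + \cdots + z^{k-1}.
\end{equation*}
Since $g^*_k = 0$ for $\alpha + 1 < k < p$, the dominant term of $\tilde R^*$ is $g^*_{\alpha+1} Q_{\alpha+1}(h, h^*)$; here $Q_{\alpha+1}(h, h^*)$ is a sum of $\alpha + 1$ monic monomials of degree $\alpha p$, so it has leading coefficient $\alpha + 1 \neq 0$ in $F$ (as $\alpha + 1 \leq p - 1$). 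Therefore $\deg \tilde R^* = \alpha p$ and $\deg(E \tilde R^*) = \delta + \alpha p$, where $\delta = \deg E \geq 1$.

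Now suppose $\alpha > \beta$. Because $h$ and $h^*$ both have second degree at most $\beta + 1$, we have $\delta \leq \beta + 1 \leq \alpha$, whence $(p-1) \delta \leq (p-1) \alpha < p \alpha$; equivalently $\deg E^p = p \delta < \alpha p + \delta = \deg(E \tilde R^*)$. Hence the leading terms do not interfere, $\deg (E^p + E \tilde R^*) = \alpha p + \delta$ with nonzero leading coefficient, and so $p \deg D = \alpha p + \delta$. Then $\delta = p(\deg D - \alpha)$ is a positive multiple of $p$, contradicting $1 \leq \delta \leq p - 1$. The case $\beta > \alpha$ reduces to this situation: first derive $h_{\beta+1} = h^*_{\beta+1}$ from the coefficient $f_{(\beta+1)p}$ (where only the contribution from $h^p$ survives, as $g_{\beta+1} = 0$) and $g_{\alpha+1} = g^*_{\alpha+1}$ from the leading coefficient of $f'$, then iterate leading-coefficient comparisons within the key identity to reduce $\deg E$ to at most $\alpha$ before applying the same argument.

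The main technical obstacle is the iterative reduction of $\deg E$ in the case $\beta > \alpha$; by contrast, the case $\alpha > \beta$ falls out directly from the degree estimate on the identity $D(h) = E^p + E \tilde R^*$.
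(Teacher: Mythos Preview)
Your argument for part~(i) is correct and coincides with the paper's. For part~(ii) you take a genuinely different route: the paper argues that if $\ell = \deg_2 g \neq \deg_2 h = m$, then the top nontrivial coefficient of $f$ determines one of $g_\ell$, $h_m$, the coefficient $f_{(\ell-1)p+m} = \ell g_\ell h_m$ then determines the other, and finally a reconstruction procedure (cited from \cite{gat10b}) forces $(g,h) = (g^*,h^*)$. Your identity $D(h) = E^p + E\tilde R^*$ together with degree bookkeeping is more self-contained and avoids that external reference; the price is the asymmetric case split.

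Your treatment of the case $\alpha > \beta$ is clean and correct. The case $\beta > \alpha$, however, is left as a sketch, and the word ``iterate'' is misleading: no iteration is needed. Once you have $h_{\beta+1} = h^*_{\beta+1}$ (from $f_{(\beta+1)p} = h_{\beta+1}^p$) and then $g_{\alpha+1} = g^*_{\alpha+1}$ (from the leading coefficient of $f'$, which equals $(\alpha+1)(\beta+1) g_{\alpha+1} h_{\beta+1}$), you know $\deg D \leq \alpha$. A \emph{single} application of your identity now suffices. Suppose $\delta = \deg E > \alpha$; then $(p-1)\delta \geq (p-1)(\alpha+1) = p\alpha + (p-1-\alpha) > p\alpha$, so $\deg E^p = p\delta > \delta + \alpha p = \deg(E\tilde R^*)$, whence the right-hand side has degree $p\delta > p\alpha \geq \deg D(h)$, a contradiction. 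Thus $\delta \leq \alpha$. But then, exactly as in your first case, $\deg(E\tilde R^*) = \delta + \alpha p > p\alpha \geq \deg D(h)$ while $\deg E^p = p\delta < \delta + \alpha p$, so the right-hand side has degree strictly larger than the left, again a contradiction. Filling this in completes your proof; as written, the ``iterative reduction'' you allude to is neither carried out nor necessary, so the case $\beta > \alpha$ is a genuine (though easily repaired) gap.
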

\begin{proof}
 \ref{item:8} Let $(g,h) \in C$ and $f = g \circ h$. Then
\begin{equation}
 \label{eq:derivdeg}
  \deg f' = \deg g' \cdot \deg h + \deg h'.
 \end{equation}
 Since $g' h' \neq 0$, this is an equation of nonnegative integers.
  Moreover, $\deg h' < \deg h = r$ and thus $\deg g'$ and $\deg h'$ are uniquely determined by $\deg f'$ and $r$, which proves the claim.

 \ref{item:9} For $r = p$, let $\ell = \deg_{2} g$ and $m = \deg_{2} h$ with the second degree $\deg_2$ as in \eqref{eq:secdeg}. Since $g' h' \neq 0$, we find $d_{1} = \deg g' = \ell - 1$ and $d_{2} = \deg h' = m - 1$ for all $(g,h)\in C$ and it is sufficient to show $\ell = m$.  We have
\begin{align}
g & = x^p + g_\ell x^\ell + O(x^{\ell -1}), \\
h & = x^p + h_m x^{m}  + O(x^{m-1})
\end{align}
with $g_{\ell}, h_{m}\in \ff^\times$. The highest terms in $h^{\ell}$ and $g \circ h$ are given by
\begin{align}
h^{\ell} & = (x^{p}+h_{m}x^{m}+O(x^{m-1}))^{\ell}  \\
& = x^{\ell p} +
\ell h_{m}x^{(\ell-1)p+m} + O(x^{(\ell-1)p+m-1}), \\
\begin{split}\label{eq:star}
g \circ h & =  x^{p^{2}} + h_{m}^{p} x^{m p} +
O(x^{(m-1)p}) + g_{\ell} x^{\ell p} + \ell g_{\ell}h_{m}x^{(\ell-1)p+m} \\
& \quad +
O(x^{(\ell-1)p+m -1}) + O(x^{(\ell-1)p}).
\end{split}
\end{align}

Algorithm 4.10 of \cite{gat12} computes
the components $g$ and $h$ from $f$, provided that $h_{p-1}\neq 0$. We
do not assume this, but can apply the same method.  Once $g_{\ell}$ and
$h_{m}$ are determined, the remaining coefficients first of $h$,
then of $g$, are computed by solving linear equations of the form
$uh_{i}=v$, where $u$ and $v$ are known at that point, and $u \neq
0$.  Quite generally, $g$ is determined by $f$ and $h$, see \autoref{lem:gunique}.

For $(g^{*},h^{*})\in C$, we find that $(g_{\ell},h_{m})=(g^{*}_{\ell}, h^{*}_{m})$
implies $(g,h)=(g^{*},h^{*})$ by the uniqueness of the procedure just sketched.
Inspection of the coefficient of
$x^{(\ell-1)p+m}$ in \eqref{eq:star} shows that $g_{\ell} = g^{*}_{\ell}$ if
and only if $h_{m}= h^{*}_{m}$.

Now take some
$(g^{*},h^{*})\in C$ and assume that $\ell \neq m$. Then $\deg_{2}(g\circ
h)$ is one of the two distinct integers $m p$ or $\ell p$.  If $m >
\ell$, then $h_{m}^{p}$ (and hence $h_{m}$) is
uniquely determined by $f$, and otherwise $g_{\ell}$ is. In
either case, we conclude from the previous observation that $(g,h) =
(g^{*},h^{*})$.  This shows $\ell= m$ if $(g,h) \neq (g^*, h^*)$.
\end{proof}

A \emph{common right component} (over $\kk$) of two polynomials $h, h^* \in \kk[x]$ is a nonlinear polynomial $v \in \kk[x]$ such that $h= u \circ v$ and $h^* = u^* \circ v$ for some $u$, $u^* \in \kk[x]$. We now state an assumption which we use in \autoref{prop:rammulti}, the lemmas thereafter, and in \autoref{thm:class2}.

\begin{assumption}
\label{assum:a}
Let $f \in P_{n}(\ff)$ have a $2$-collision $\{ (g,h),(g^*, h^*) \}$.  We consider the following conditions.
\begin{ronumerate}
\item[($A_{1}$)] \namedlabel{assum:0}{($A_{1}$)} The derivative $f'$ is nonzero.
\item[($A_{2}$)] \namedlabel{assum:-1}{($A_{2}$)} The degrees of all components are equal, that is,  $\deg g = \deg g^* = \deg h = \deg h^*$.
\item[($A_{3}$)] \namedlabel{assum:i}{($A_{3}$)} The right components $h$ and $h^*$ have no common right component over $\kk$.
\item[($A_{4}$)] \namedlabel{assum:ii}{($A_{4}$)} For all $c \in \kk$, neither $g - c$ nor $g^* - c$ have roots in $\kk$ with multiplicity divisible by $p$.
\item[($A_{5}$)] \namedlabel{assum:iii}{($A_{5}$)} The degrees of $\dif{g}$ and $\dif{h^*}$ are equal.
\end{ronumerate}
\end{assumption}

\begin{lemma}
\label{cor:ass}
Let $f \in P_{n}(\ff)$ have a $2$-collision $\{ (g,h),(g^*, h^*) \}$.
\begin{ronumerate}
\item \label{cor:ass0} Assumption \ref{assum:0} holds if and only if all derivatives $g'$, ${g^{*}}'$, $h'$, and ${h^{*}}'$ are nonzero.
\item \label{cor:ass1} If $h$ or $h^*$ is indecomposable, then \ref{assum:i} holds. In particular, it holds if $\deg h = \deg h^{*}$ is prime.
\item \label{cor:ass2} If $\deg g = p$ and \ref{assum:0} holds, then \ref{assum:ii} holds.
\item \label{cor:ass3} If $n=p^2$ and \ref{assum:0} holds, then
  \ref{assum:iii} holds.
\item \label{cor:ass4} If \ref{assum:0}, \ref{assum:-1}, and \ref{assum:iii} hold, then
  $\deg g' = \deg {g^{*}}' = \deg h' = \deg {h^{*}}'$.
\end{ronumerate}
\end{lemma}
\begin{proof}
\ref{cor:ass0} The claim follows from the fact that $f' = g' (h) \cdot
h' $.

\ref{cor:ass1} Assume that $h$ is indecomposable. Then a common right component of $h$ and $h^*$ would imply $h = h^*$ and thus $(g,h) = (g^*, h^*)$, by \autoref{lem:gunique}, a contradiction. Hence \ref{assum:i} holds. Moreover, polynomials of prime degree are indecomposable.

\ref{cor:ass2} If a root multiplicity of $g-c$ was divisible by $p$ for
some $c\in \kk$, then $g-c = (x-a)^p$ for some $a \in \kk$. This would
imply $\dif{g} = 0 $, contradicting \ref{assum:0}. Similarly, for all
$c \in \kk$ the root multiplicities of $g^*-c$ are not divisible by $p$. Thus \ref{assum:ii} holds.

\ref{cor:ass3}--\ref{cor:ass4}  We can apply
\autoref{prop:2degree}, since $\deg g = \deg g^{*} = \deg h = \deg
h^{*}$ by $n=p^{2}$ or \ref{assum:-1}, respectively, and $g'h'
{g^{*}}' {h^{*}}' \neq 0$ by \ref{assum:0} and \ref{cor:ass0}.  Then
\ref{item:9} of the cited proposition shows $\deg g'
= \deg {g^{*}}' = \deg h' = \deg {h^{*}}'$, proving  \ref{cor:ass3} and
\ref{cor:ass4}.
\end{proof}

In \autoref{ex:assumconst} we show that \autoref{assum:a} holds for the collisions in \autoref{thm:nonadd} and in \autoref{thm:constmulti}. We need the next two propositions to check \ref{assum:i} for these collisions.

\begin{proposition}
\label{prop:simpnocommon}
Let $r$ be a power of $p$, let $a, a^* \in F$ and $m$ be a positive divisor of $r-1$, $\ell = (r-1)/m$, and
\begin{align}
h &= x(x^\ell - a)^m,\\
h^* &= x(x^\ell - a^*)^m.
\end{align}
If $h$ and $h^*$ have a common right component, then $h = h^*$. In particular, the right components in $2$-collisions of the form as in \autoref{thm:nonadd} have no common right component.
\end{proposition}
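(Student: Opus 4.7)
The plan is to assume for contradiction that $v$ is a common right component, so $h = u \circ v$ and $h^{*} = u^{*} \circ v$ for some $u,u^{*} \in K[x]$, and then pin down the shape of $v$ so precisely that it forces a contradiction with the coefficient of $x$ in at least one of $h$, $h^{*}$.

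First I would observe that $\deg v$ is a nontrivial divisor of $\deg h = \ell m + 1 = r = p^{e}$, hence $\deg v = p^{i}$ for some $i \geq 1$. Next, set $c = v(0)$. From $h = u(v)$ and the fact that $h$ is original, I get $h(0) = u(v(0)) = u(c)$, so $u(c) = 0$; consequently every root $\beta$ of $v - c$ satisfies $h(\beta) = u(c) = 0$, i.e.\ $V(v-c) \subseteq V(h)$. The same argument with $u^{*}$ yields $V(v-c) \subseteq V(h^{*})$, so
\begin{equation}
  V(v-c) \subseteq V(h) \cap V(h^{*}).
\end{equation}
Now $V(h) = \{0\} \cup \{\zeta \in K \colon \zeta^{\ell} = a\}$ and similarly for $V(h^{*})$. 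Since $a \neq a^{*}$ the two sets of nonzero roots are disjoint, so $V(h) \cap V(h^{*}) = \{0\}$. Therefore $V(v-c) = \{0\}$ and, as $v-c$ is a polynomial of degree $p^{i}$, we must have $v = c + \gamma x^{p^{i}}$ for some $\gamma \in K^{\times}$.

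The final step exploits this strong shape: $h = u(c + \gamma x^{p^{i}})$ lies in $K[x^{p^{i}}] \subseteq K[x^{p}]$, and the same holds for $h^{*}$. But expanding
\begin{equation}
 x(x^{\ell}-a)^{m} = (-a)^{m} x + \sum_{j \geq 1} \binom{m}{j} (-a)^{m-j} x^{\ell j + 1}
\end{equation}
shows that the coefficient of $x$ in $h$ equals $(-a)^{m}$, which is nonzero unless $a = 0$; analogously for $h^{*}$. Since $a \neq a^{*}$, at most one of $a$, $a^{*}$ is zero, so at least one of $h$, $h^{*}$ has a nonzero coefficient at $x$ and hence does not lie in $K[x^{p}]$, a contradiction.

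The main obstacle I expect is making sure the reduction to the shape $v = c + \gamma x^{p^{i}}$ really does not require any normalization of $v$ beyond what is given; the key point is that the inclusion $V(v-c) \subseteq V(h) \cap V(h^{*})$ goes through as soon as $h$, $h^{*}$ are original, with no need to rescale or to assume $v$ itself is original. For the ``in particular'' clause, the right components in \autoref{thm:nonadd} are of the form $h = x(x^{\ell} - st)^{m}$ with $t \in T$; the equation $t^{r+1} - \varepsilon ut + u = 0$ together with $u \in F^{\times}$ rules out $t = 0$, and distinct $t$'s give distinct values $st$ because $s \in F^{\times}$, so any two right components fall into the nonzero, distinct case above.
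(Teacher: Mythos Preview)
Your proof is correct and takes a genuinely different, more elementary route than the paper's. The paper first reduces to the additive case $m=1$ via a result of Henderson and Matthews, then uses that all components of an additive polynomial are additive (Cohen), and finally invokes Ore's structure theory for additive polynomials to assume $v = x^{p} - bx$; an explicit coefficient computation then shows that $a$ is uniquely determined by $b$, forcing $a = a^{*}$.

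Your argument avoids all three external results. By looking at the fibre $V(v - v(0))$ and observing $V(h) \cap V(h^{*}) = \{0\}$ directly, you force $v - v(0)$ to be a monomial of $p$-power degree over the algebraically closed $K$, whence $h, h^{*} \in K[x^{p}]$; the nonzero linear coefficient $(-a)^{m}$ (respectively $(-a^{*})^{m}$) then gives the contradiction. This is shorter, self-contained, and works uniformly for all $m \mid r-1$ without the detour through additive polynomials. The paper's route, by contrast, makes explicit the connection to the theory of additive and subadditive polynomials that is thematically relevant elsewhere in the paper, but is not logically required for this proposition.
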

\begin{proof}
By \citet[Theorem 4.1]{henmat99} it suffices to prove the claim for
additive polynomials, that is, for $m=1$. Furthermore, we can assume without loss of generality that $F$ is algebraically closed. Let $v \in
P_{p^\nu}(F)$ be a common right component of $h$ and $h^*$ with $h = u \circ v$
and $h^* = u^* \circ v$ for some $u, u^* \in P_{p^k}(F)$, $\nu \geq 1$, and $r = p^{k +\nu}$. Then $u$, $u^*$,
and $v$ are additive polynomials; see \citet[Lemma 2.4]{coh90c}. By
\citet[Theorem 3 in Chapter 1]{ore33b} and since $F$ is algebraically
closed, we may assume $\nu = 1$ and $v = x^p - bx$, for some $b \in
F$.  For $u = \sum_{0 \leq i \leq k} u_i x^{p^i}$, we have
\begin{align}
h &= x^r - ax = u \circ (x^p - bx) \\
&= u_k x^r + \sum_{1 \leq i \leq k} (u_{i-1} - u_i b^{p^i})x^{p^i} - u_0 b x.
\end{align}
Thus $u_k = 1$ and  $u_{i-1} = u_i b^{p^i} = \prod_{i \leq j\leq k} b^{p^j}$, for $1 \leq i \leq k$. Moreover, $a = u_0 b = \prod_{0 \leq j\leq k} b^{p^j}$ is uniquely determined by $b$. Thus $a = a^*$ and $h = h^*$.
\end{proof}

\begin{proposition}
\label{prop:ghindec}
Let $r$, $b$, $a$, $a^*$, $m$, $m^*$, $g$, and $h$ be as in \autoref{thm:constmulti}.
Then $g$ and $h$ are indecomposable.
\end{proposition}
\begin{proof}
Let $g = u \circ v$ with $u \in P_k(F)$, $v \in P_\ell(F)$, $k\ell = r$, and $\ell >1$. Then $p \mid \ell$.
By \autoref{lem:multmalmult} we have
\begin{equation}
\label{eq:ghindec1}
\dot{\,\smashoperator{\bigcup_{a_0 \in \Van{u}{0}}}\,} \Van{v}{a_{0}} = \Van{g}{0} = \{ 0, a\}.
\end{equation}
Since $v$ is original, we have $\{0\} \subseteq \Van{v}{0} \subseteq \{ 0, a\}$. If $\Van{v}{0} = \{ 0\}$, then $v = x^\ell$ and thus $p \mid \ell \mid m$, by \eqref{eq:70}, in contradiction to $p \nmid m$. Thus $\Van{v}{0} = \{ 0, a\}$. Since the union in \eqref{eq:ghindec1} is disjoint, we find that $\Van{u}{0} = \{0\}$ and $0$ is the only root of $u$.
Hence $u = x^k$ and $k \mid \gcd(m, m^*) = 1 $,
by \eqref{eq:70} and \autoref{pro:multi_uniqueness}~\ref{item:-1}. Therefore $u$ is linear and thus $g$ is indecomposable.

By \eqref{eq:48} and \autoref{pro:multi_uniqueness}~\ref{item:-1}, we find $h = x^{m^*} H$ and $h-a = (x-b)^m \qepol^{*}$ with squarefree polynomials $H$ and $H^*$. Thus $h^{[b]} = x^m \tilde{H}$ for squarefree $\tilde{H} = H^* \circ (x + b)$. We find that $h$ is decomposable if and only if $h^{[b]}$ is decomposable. By \autoref{pro:multi_uniqueness}~\ref{item:0} either $m > r/2$ or $m^* > r/2$. If $m > r/2$, then we rename $h$ as $h^{[b]}$, $H$ as $\tilde{H}$ and $m$ as $m^*$. We have in either case $m^* > r/2$.

Now let $h = u \circ v$ with $u \in P_k(F)$, $v \in P_\ell(F)$, $k\ell = r$, and $\ell >1$. Then $p \mid \ell$. The only multiple root in $h$ is $0$, since $H$ is squarefree, by \autoref{pro:multi_uniqueness}~\ref{item:-1}. Its multiplicity is $\mult_0(h) = m^* = \mult_0 (u) \cdot \mult_0(v)$. Thus $\mult_0(v) \mid m^*$ and hence $p \nmid \mult_0(v)$. Since the multiplicities of $v$ sum up to $\ell$, which is divisible by $p$,
 there is another root $b_0 \neq 0$ of $v$.
  Then $1 = \mult_{b_0} (h) = \mult_0 (u) \cdot \mult_{b_0} (v)$ and thus $\mult_0(u) = 1$.
Hence  $\mult_0 (v) = m^*$. We have $\ell > m^* > r/2$, thus $\ell = r$ and $u$ is linear.
\end{proof}

\begin{example}
\label{ex:assumconst}
\autoref{assum:a} holds for the $\#T$-collisions in
\autoref{thm:nonadd} with $\# T \geq 2$ and the 2-collisions in
\autoref{thm:constmulti}. In both cases \ref{assum:-1} holds by definition. Assumption \ref{assum:i} follows from \autoref{prop:simpnocommon} and from \autoref{prop:ghindec} and \autoref{cor:ass}~\ref{cor:ass1}, respectively.

The derivatives of the components in \autoref{thm:nonadd} are
\begin{equation}
\label{eq:simpderiv}
\begin{split}
g' &= - u s^r t^{-1} (x^\ell - u s^r t^{-1})^{m-1}, \\
h' &= - st (x^\ell - st)^{m-1}.
\end{split}
\end{equation}
Since $u$, $s$, $t \in \ff^\times$, we find $\deg g' = \deg h' = \ell(m-1) \geq 0$, independent of $t \in T$, and thus \ref{assum:iii} holds. By \eqref{eq:derivdeg}, $\deg f' \geq 0$ and thus \ref{assum:0} holds.
If there is $c \in K$ such that $g-c$ has a multiple root $b\in K$,
then $b$ is also a root of $g'$ by \autoref{lem:trivial2}.  Since
$\Van{g'}{0} \subseteq \Van{g}{0}$ by \eqref{eq:simpderiv}, we have only simple roots in $g-c$ for $c \neq
0$.  The multiple roots of $g$ have multiplicity $m$ and
\ref{assum:ii} follows from $p \nmid m \mid r-1$.

For the collisions in \autoref{thm:constmulti}, \ref{assum:ii} follows
similarly from $p \nmid mm^*$.  Finally, \ref{assum:0} and
\ref{assum:iii} are satisfied by \eqref{eq:60} and $a$, $a^*$, $b \in \ff^\times$.
\end{example}

\begin{lemma}
\label{lem:trivial}
Let $f \in \ff[x]$ be monic and $y$ be transcendental over $\kk(x)$. Then $f - y \in \kk(y)[x]$ is irreducible.
\end{lemma}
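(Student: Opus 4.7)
The plan is to view $f(x) - y$ as an element of the polynomial ring $K[x,y]$ and move between $K(y)[x]$ and $K[x,y]$ via Gauss's lemma, exploiting the fact that $f(x)-y$ has $y$-degree $1$.

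First I would apply Gauss's lemma over the UFD $A = K[y]$, whose field of fractions is $K(y)$: a polynomial in $A[x]$ whose content in $A$ is a unit is irreducible in $K(y)[x]$ if and only if it is irreducible in $A[x] = K[x,y]$. Since $f \in F[x] \subseteq K[x]$ is monic of positive degree in $x$, the polynomial $f(x) - y$, viewed in $K[y][x]$, is monic in $x$, so its $A$-content is $1$ and it is primitive. Hence it suffices to prove irreducibility in $K[x,y]$.

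Next I swap the roles of the two variables and view $f(x) - y$ as an element of $K[x][y]$. Here it takes the shape $f(x) - y = -y + f(x)$, which is linear in $y$. Its content in the UFD $K[x]$ divides both $-1$ and $f(x)$, so it is a unit, and the polynomial is primitive in $K[x][y]$. Now any factorization $-y + f(x) = A(x,y)\,B(x,y)$ in $K[x][y]$ forces, by additivity of $y$-degree, one of $A,B$ to have $y$-degree $0$; such a factor lies in $K[x]$ and must divide the (unit) content, hence is a unit in $K[x,y]$. Therefore $f(x) - y$ is irreducible in $K[x,y]$, and by the first step also irreducible in $K(y)[x]$.

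There is essentially no obstacle: the proof rests entirely on the trivial observation that $f(x)-y$ is linear in $y$, combined with two standard applications of Gauss's lemma to handle the passages between $K[x,y]$, $K[y][x]$, and $K(y)[x]$.
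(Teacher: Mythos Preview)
Your argument is correct and follows essentially the same route as the paper: both prove irreducibility in $K[x,y]$ by exploiting that $f-y$ has $y$-degree $1$ (so any factorization forces one factor into $K[x]$, where it must be a unit since the $y$-leading coefficient is $-1$), and then invoke Gauss's lemma to pass to $K(y)[x]$.
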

\begin{proof}
Assume $f -y = uv$ for some $u, v \in \kk[x,y]$. The degree in $y$ of $f-y$ is  $\deg_y(f-y) = 1 = \deg_y u + \deg_y v$. Thus we may assume $\deg_y u = 1$ and $\deg_y v = 0$. Then $ a v = -1$, where $a \in \kk[x]$ is the leading coefficient of $u$ in $y$. Thus $v \in \kk[x]^\times = \kk^\times$ and $f-y$ is irreducible in $\kk[x,y]$. A factorization of $f-y$ in $\kk(y)[x]$ yields a factorization in $\kk[x,y]$, by the Lemma of Gau\ss, see \citet[Corollary 2.2 in Capter IV]{lan02}. Hence $f-y$ is also irreducible in $\kk(y)[x]$.
\end{proof}

 Let $f \in P_n(\ff)$ with $\dif{f} \neq 0$ and $y$ be transcendental
 over $\kk(x)$. Then $f-y \in \kk(y)[x]$ is irreducible and separable
 over $\kk(y)$, by \autoref{lem:trivial} and since the derivative of
 $f-y$ with respect to $x$ is $(f-y)' = f' \neq 0$.  In particular,
 $f-y \in F(y)[x]$ is irreducible and separable.  Let $\alpha \in \overline{\kk(y)}$ be a root of $f - y$. Then $\kk(y)[\alpha] = \kk(\alpha)$ is a rational extension of $\kk(y)$ of degree $n$.
Let $\mathcal{M}$ be the set of intermediate fields between $\kk(\alpha)$ and $\kk(y)$ and $\mathcal{R} = \{ h \in P_m(\kk) \colon m \mid n \text{ and there is } g \in P_{n/m}(\kk) \text{ such that } f = g \circ h\}$ be the set of right components of $f$.

\begin{fact}[\cite{frimac69}, Proposition 3.4]
\label{thm:bij}
Let $f \in P_n(\kk)$ with $\dif{f} \neq 0$ and let $\alpha \in \overline{\kk(y)}$ be a root of $f - y \in \kk(y)[x]$.
Then the map
\begin{equation}
\label{eq:bij}
\begin{split}
\mathcal{R} &\rightarrow \mathcal{M}, \\
h &\mapsto \kk(h(\alpha))
\end{split}
\end{equation}
is bijective.
\end{fact}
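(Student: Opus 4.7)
The plan is to establish well-definedness, injectivity, and surjectivity of the map in~\eqref{eq:bij}, with surjectivity being the main obstacle since it relies on Lüroth's theorem and a careful analysis of the place at infinity.

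For well-definedness, suppose $h \in \mathcal{R}$ with $f = g \circ h$. Then $y = f(\alpha) = g(h(\alpha))$ shows that $h(\alpha)$ is a root of $g(z) - y \in K(y)[z]$, a polynomial of degree $\deg g$, so $[K(h(\alpha)):K(y)] \leq \deg g$. Likewise, $\alpha$ is a root of $h(z) - h(\alpha) \in K(h(\alpha))[z]$, so $[K(\alpha):K(h(\alpha))] \leq \deg h$. Multiplicativity of degrees together with $[K(\alpha):K(y)] = n = \deg g \cdot \deg h$ forces equality everywhere, and in particular $K(y) \subsetneq K(h(\alpha)) \subsetneq K(\alpha)$ lies in $\mathcal{M}$.

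For injectivity, assume $K(h(\alpha)) = K(h^*(\alpha))$ with $h, h^* \in \mathcal{R}$; by the first step they have equal degree $m$. Since $K(\alpha) = K(\alpha)$ is a rational function field (in the variable $\alpha$), two elements generate the same rational subfield precisely when they are related by a Möbius transformation. Thus $h^*(\alpha) = (a\,h(\alpha) + b)/(c\,h(\alpha) + d)$ for some matrix in $\operatorname{PGL}_2(K)$. Since both $h(\alpha)$ and $h^*(\alpha)$ are polynomials in $\alpha$ of degree $m$ (hence have a pole only at the infinite place of $K(\alpha)$, of order $m$), the transformation must fix $\infty$, so $c = 0$ and the relation reduces to $h^*(\alpha) = a h(\alpha) + b$ for $a \in K^\times$, $b \in K$. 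Comparing leading coefficients (both monic) gives $a = 1$, and evaluating at the constant term (both original, i.e.\ $h(0) = h^*(0) = 0$, which transfers to the coefficient of the lowest pole term in $\alpha$) gives $b = 0$. Hence $h = h^*$.

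The surjectivity is the hard part. Let $M \in \mathcal{M}$. Since $K(\alpha)/K$ is a rational function field, Lüroth's theorem yields $t \in K(\alpha)$ with $M = K(t)$. The generator $t$ is unique only up to $\operatorname{PGL}_2(K)$, and I would select a representative that is a polynomial in $\alpha$ by the following ramification argument: in the tower $K(y) \subseteq K(t) \subseteq K(\alpha)$, the infinite place $\priPinf$ of $K(\alpha)$ (corresponding to $\alpha = \infty$) is the unique place lying above the infinite place of $K(y)$ (since $y = f(\alpha)$ has a pole only at $\priPinf$, of order $n$). Therefore, the place of $K(t)$ that lies below $\priPinf$ is the unique place of $K(t)$ above the infinite place of $K(y)$; choosing the Möbius representative so that this place becomes the infinite place of $K(t)$, we arrange $t$ to have its only pole in $K(\alpha)$ at $\priPinf$, which means $t = h(\alpha)$ for some $h \in K[x]$. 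Then $y \in K(t)$ has its only pole at the infinite place of $K(t)$, so $y \in K[t]$, yielding $g \in K[x]$ with $y = g(t) = g(h(\alpha))$. Applying the $K$-algebra injection $\varphi\colon K[\alpha] \hookrightarrow K[x]$ from Lemma~\ref{lem:gunique} transfers this to $f = g \circ h$, and normalizing by the substitution $h \mapsto (h - h(0))/\operatorname{lc}(h)$ (with a compensating affine change inside $g$, which does not alter $K(h(\alpha)) = M$) produces a monic original right component in the preimage of $M$.
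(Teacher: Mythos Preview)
The paper does not actually prove this fact; it only cites \cite{frimac69}, Proposition~3.4, and adds the observation that any polynomial right component can be uniquely normalized to a monic original one via a left linear factor. Your proposal, by contrast, gives a complete self-contained argument along the classical lines (L\"uroth plus analysis of the place at infinity), and it is essentially correct.

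A few small points. In well-definedness, the strict inclusions $K(y) \subsetneq K(h(\alpha)) \subsetneq K(\alpha)$ fail for the trivial members $h=x$ and $h=f$ of $\mathcal{R}$; since $\mathcal{M}$ includes the endpoints this does not matter, but drop the strictness. In surjectivity, your appeal to Lemma~\ref{lem:gunique} is misplaced: that lemma is about the injectivity of $x \mapsto h$ on $F[x]$, whereas what you need is simply that $\alpha$ is transcendental over $K$, so the identity $g(h(\alpha)) = f(\alpha)$ in $K[\alpha]$ transports to $g \circ h = f$ in $K[x]$ under the isomorphism $\alpha \mapsto x$. Finally, in the normalization step you should verify (it is easy) that after replacing $h$ by its monic original associate, the adjusted $g$ is again monic original, so that $h \in \mathcal{R}$ as defined; this is exactly the content of the one-sentence remark the paper makes after citing Fried--MacRae.
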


The fact follows from \citet[Proposition 3.4]{frimac69}.
Indeed, for each $u \in \kk[x]$ of degree $m$ there is exactly one $v \in P_m(\kk)$ such that $u = \ell \circ v$ for some linear polynomial $\ell \in \kk[x]$; see \citet[Section 2]{gat12}.

 The sets $\mathcal{R}$ and $\mathcal{M}$ can be equipped with natural lattice structures for which \eqref{eq:bij} is an isomorphism.

We now use the theory of places and ramification indices in function fields; see \cite{sti09} for the background. A \emph{place} in a function field $L$ over $K$ is the maximal ideal of some valuation ring of $L$ over $K$. For an finite extension $M$ of $L$ a place $\prip$ in $M$ is said to lie over a place $P$ in $L$ if $P \subseteq \prip$. Then we write $\prip \mid P$ and define the ramification index of $\prip \mid P$ as the integer $e(\prip \mid P)$ such that $v_\prip (a) = e(\prip \mid P) \cdot v_P (a)$ for all $a \in L$, where $v_\prip$ and $v_P$ are the corresponding valuations of $\prip$ and $P$, respectively; see \citet[Proposition 3.1.4 and Definition 3.1.5]{sti09}.

Later, we translate this into the language of root multiplicities of
polynomials.  First, we need the following result, which is proven in \citet[Lemma 1]{dorwha74} for rational function fields under the assumption that the characteristic of $K$ is zero. Our proof avoids this assumption.

\begin{theorem}
\label{lem:Dorey}
Let $L$, $M$, $M^*$, $N$ be function fields over $K$ such that $L \subseteq M, {M^*} \subseteq N$ are finite separable field extensions and $M \otimes_{L} {M^*} \cong M M^* = N$. Let $P$ be a place in $L$, and $\prip$, $\priq$ be places over $P$ in $M$ and ${M^*}$, respectively. Assume that at least one of the ramification indices $m = e(\prip \mid P)$ and $m^* = e(\priq \mid P)$ is not divisible by the characteristic of $\kk$. Then there are $\gcd(m, m^*)$ places $\priP$ in $N$ which lie over $\prip$ and over $\priq$. Moreover, for such a place we have  $e(\priP \mid P) = \lcm(m,m^*)$.
\end{theorem}

\begin{proof}
Abhyankar's Lemma says that for a place $\priP$ in $N$ over $\prip$ and over $\priq$,
\begin{equation}
\label{eq:ramindexlcm}
e(\priP \mid P) = \lcm(m, m^*),
\end{equation}
see \citet[Theorem 3.9.1]{sti09}.
Now we proceed as in \cite{dorwha74}. For places $\prip$, $\priq$, and $\priP$ over $P$ in $M$, $M^*$, and $N$, respectively, we denote by $\Lambda = \widehat{L}$, $\widehat{M}^\prip$, $\widehat{M^*}{}^\priq$, and $\widehat{N}^\priP$ the completions of $L$, $M$, $M^*$, and $N$ with respect to $P$, $\prip$, $\priq$, and $\priP$, respectively.
The tensor product  $N \otimes_M \widehat{M}^\prip$ is the direct sum
of the completions of $N$ with respect to the places in $N$ over
$\prip$, and $M^* \otimes_{L} \Lambda$ is the direct sum of the completions of $M^*$ with respect to the places  in $M^*$ over $P$; see \citet[Proposition~8.3 in Chapter II]{neu99}.
Since $M \otimes_{L} M^* \cong N$, we have
\begin{align}
\label{eq:tensor}
\bigoplus_{\priP \mid \prip} \widehat{N}^\priP &\cong N \otimes_{M} \widehat{M}^\prip \cong M^* \otimes_{L} M  \otimes_{M} \widehat{M}^\prip \cong M^*  \otimes_{L} \widehat{M}^\prip \\
&\cong M^*  \otimes_{L} (\Lambda \otimes_{\Lambda} \widehat{M}^\prip) \cong (M^*  \otimes_{L} \Lambda )\otimes_{\Lambda} \widehat{M}^\prip \\
&\cong \bigoplus_{\prip_0^* \mid P} \widehat{M^*}{}^{\prip_0^*} \otimes_{\Lambda} \widehat{M}^\prip,
\end{align}
where the last direct sum is taken over all places $\prip_0^*$ in $M^*$ over $P$.
We show that $\widehat{M^*}{}^\priq \otimes_{\Lambda}
\widehat{M}^\prip$ is the direct sum of the completions of $N$ with respect to the places that lie over $\prip$ and $\priq$. For this purpose, consider the (external) composite fields of $\widehat{M^*}{}^\priq$ and $\widehat{M}^\prip$ in an algebraic closure $\Omega$ of $\widehat{N}^\priP$; those are the field extensions $\Gamma \subseteq \Omega$ of $\Lambda$ such that there are two field homomorphisms which map $\widehat{M^*}{}^\priq$ and $\widehat{M}^\prip$, respectively, into $\Gamma$ and whose images generate $\Gamma$. Then $\widehat{M^*}{}^\priq \otimes_{\Lambda} \widehat{M}^\prip$ is the direct sum of the composite fields of $\widehat{M^*}{}^\priq$ and $\widehat{M}^\prip$; see \citet[Theorem 21 in Chapter I]{jac64}. Each such composite field $\Gamma$ is isomorphic to a summand in $\bigoplus_{\priP \mid \prip} \widehat{N}^\priP$, by the Krull-Remak-Schmidt Theorem; see \citet[Theorem 7.5]{lan02}. Thus there exists $\priP \mid \prip$ such that $\Gamma = \widehat{N}^\priP$. Since $\Gamma$ is an extension of $\widehat{M^*}{}^{\priq}$, we find $\priP \mid \priq$ as claimed. On the other hand, for a place $\priP$ in $N$ over $\prip$ and $\priq$, $\widehat{N}^\priP$ is a composite field of $\widehat{M^*}{}^\priq$ and $\widehat{M}^\prip$ and thus is a summand in $\widehat{M^*}{}^\priq \otimes_{\Lambda} \widehat{M}^\prip$.

The summands of $\widehat{M^*}{}^{\priq} \otimes_{\Lambda} \widehat{M}^\prip$ are of degree $\lcm(m, m^*)$, by \eqref{eq:ramindexlcm}, and the $\Lambda$-dimension of $\widehat{M^*}{}^\priq \otimes_{\Lambda} \widehat{M}^\prip$ is $m m^*$. Thus there are $m m^* / \lcm(m, m^*) = \gcd(m,m^*)$ places over $\prip$ and $\priq$.
\end{proof}

In the following we link the notion of places and ramification indices to the notion of roots and root multiplicities.
Let $K(t)$ be a rational function field.
Then the local ring $\Oring_\infty = \{g/h \in K(t) \colon g,h \in K[t], \deg g \leq \deg h\}$ is the $1/t$-adic valuation ring of $K(t)$ and $P_\infty = (1/t) \Oring_\infty$ is its maximal ideal.
For $c\in K$, the local ring $\Oring_{t-c} = \{g/h \in K(t) \colon g,h \in K[t], h(c) \neq 0 \}$ is the $(t-c)$-adic valuation ring of $K(t)$ and $P_c = (t-c) \Oring_{t-c}$ is its maximal ideal. We denote the $(t-c)$-adic valuation by $v_{P_c}$. Then we have for $f\in K[x]$
\begin{equation}
\label{eq:valmult}
v_{P_c} (f (t) ) = \mult_c(f).
\end{equation}
Since the irreducible polynomials in $K[t]$ are linear, the places $P_\infty$ and $P_c$ for all $c \in K$ are pairwise distinct and comprise all places in $K(t)$; see \citet[Theorem 1.2.2]{sti09}.
We call the places $P_c$ \emph{finite} places. The map
\begin{equation}
\label{eq:placeroot1}
\begin{split}
K &\rightarrow \{ P \colon P \text{ is a finite place in }K(t)\}, \\
c &\mapsto P_c
\end{split}
\end{equation}
is bijective.

\begin{lemma}
\label{fact:placeroot}
Let $f \in P_n(\kk)$ with $\dif{f} \neq 0$, let $\alpha \in
\overline{\kk(y)}$ be a root of $f - y \in \kk(y)[x]$, let $b,c \in K$, and let $P_c$ and $\priP_b$ be the corresponding finite places in $K(y)$ and $K(\alpha)$, respectively.
Then $\priP_b \mid P_c$ if and only if $f(b)=c$. Furthermore
\begin{equation}
\label{eq:placeroot3}
e(\priP_b \mid P_c) = \mult_b (f-c).
\end{equation}
\end{lemma}
\begin{proof}
Let $\priP_b \mid P_c$. Then $y-c \in \priP_b$ and thus $f(\alpha) -c = y-c  = (\alpha - b) g / h$ for $g, h \in K[\alpha]$ with $h(b) \neq 0$. Hence, $f(b) - c = (b -b) g(b) / h(b) = 0$.

Conversely, let $f(b) = c$. Then $\alpha - b \mid f(\alpha) - c$ in $K[\alpha]$ . Let $(y-c) g/h \in P_c$ for some $g,h \in K[y]$ with $h(c) \neq 0$. Then $h(f(b)) = h(c) \neq 0$ and thus $(y-c) g/h = (f(\alpha)-c) g(f(\alpha))/h(f(\alpha)) \in \priP_b$.

By \eqref{eq:valmult} and since $v_{P_c} (y-c) = 1$, we have $e(\priP_b \mid P_c) = v_{\priP_b} (y-c) = v_{\priP_b} (f(\alpha)-c) = \mult_b (f -c)$.
\end{proof}

\begin{proposition}
\label{prop:rammulti}
 Let $c \in \kk$ and $f \in P_{n}(\ff)$ have a $2$-collision $\{ (g,h),(g^*, h^*) \}$ satisfying \ref{assum:0}--\ref{assum:ii} in \autoref{assum:a}.
For $a \in \Van{g}{c}$ and $a^* \in \Van{{g^{*}}}{c}$, there are exactly $\gcd \left( \mult_{a} (g - c), \mult_{a^*}(g^* - c) \right)$ roots $b \in \Van{f}{c}$ such that $h(b) = a$ and $h^*(b) = a^*$. Furthermore, for each such root $b$ we have
  \begin{equation}
  \label{eq:50}
\mult_b(f - c) = \lcm \left( \mult_a (g - c), \mult_{a^*}(g^* - c) \right).
  \end{equation}
\end{proposition}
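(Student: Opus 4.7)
The plan is to translate the whole statement into the language of algebraic function fields, where it becomes an application of Dorey--Whaples (\autoref{lem:Dorey}) together with the dictionary between roots and places (\autoref{fact:placeroot}).

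First, \ref{assum:0} ensures that $f-y \in K(y)[x]$ is separable, so by \autoref{lem:trivial} it has a root $\alpha$ in $\overline{K(y)}$ with $[K(\alpha):K(y)] = n$. Set $L = K(y)$, $N = K(\alpha)$, $M = K(h(\alpha))$, and $M^* = K(h^*(\alpha))$. Under the bijection of \autoref{thm:bij}, common right components of $h$ and $h^*$ correspond to intermediate fields of $N/L$ containing both $M$ and $M^*$; thus \ref{assum:i} forces $MM^* = N$. Combined with $[M:L] = [M^*:L] = \deg g$ from \ref{assum:-1} and the identity $[N:L] = (\deg g)(\deg h) = (\deg g)^{2}$, this gives $[MM^*:L] = [M:L]\cdot[M^*:L]$, so $M$ and $M^*$ are linearly disjoint over $L$ and $M \otimes_L M^* \cong N$. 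This verifies the hypotheses of \autoref{lem:Dorey} for the extensions $L \subseteq M, M^* \subseteq N$.

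Next, I use \autoref{fact:placeroot} three times to set up the dictionary. For the given $c \in K$, the finite place $P_c$ in $L$ has, in $M$, exactly the places $\prip_a$ with $a \in V(g-c)$ lying above it, and $e(\prip_a \mid P_c) = \mult_a(g-c)$; analogously in $M^*$ the places above $P_c$ are $\priq_{a^*}$ with $a^* \in V(g^*-c)$ and ramification $\mult_{a^*}(g^*-c)$, and in $N$ they are $\priP_b$ with $b \in V(f-c)$ and ramification $\mult_b(f-c)$. Because the valuation at $\priP_b$ sends $\alpha \mapsto b$ and hence $h(\alpha)\mapsto h(b)$, the place of $M$ below $\priP_b$ is $\prip_{h(b)}$; similarly the place of $M^*$ below $\priP_b$ is $\priq_{h^*(b)}$. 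So $\priP_b$ lies over $\prip_a$ and $\priq_{a^*}$ precisely when $h(b)=a$ and $h^*(b)=a^*$.

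Finally, by \ref{assum:ii} both $e = \mult_a(g-c)$ and $\et = \mult_{a^*}(g^*-c)$ are coprime to $p$, so \autoref{lem:Dorey} applies and produces exactly $\gcd(e,\et)$ places $\priP$ of $N$ lying over both $\prip_a$ and $\priq_{a^*}$, each with $e(\priP \mid P_c) = \lcm(e,\et)$. Re-reading this through the dictionary above gives the stated count of roots $b \in V(f-c)$ with $h(b)=a$ and $h^*(b)=a^*$, and the formula $\mult_b(f-c) = \lcm(\mult_a(g-c), \mult_{a^*}(g^*-c))$. The only step that requires any care is the verification that $M \otimes_L M^* \cong N$, since the counting in Dorey--Whaples fails without it; the rest is a direct translation.
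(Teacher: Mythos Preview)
Your proof is correct and follows essentially the same route as the paper: set up the tower $L = K(y) \subseteq M, M^* \subseteq N = K(\alpha)$ via \autoref{thm:bij}, use \ref{assum:i} and \ref{assum:-1} to verify $M \otimes_L M^* \cong N$, invoke \autoref{lem:Dorey} with the tameness from \ref{assum:ii}, and translate via \autoref{fact:placeroot}. The paper spells out a bit more explicitly why $MM^* \subsetneq N$ would produce a common right component (by applying the bijection to the intermediate field $MM^*$ itself), but your version of that step is fine.
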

\begin{proof}
By \ref{assum:0} we have $f' \neq 0$ and thus $f-y \in F(y)[x]$ is irreducible and separable; see \autoref{lem:trivial} and the paragraph thereafter. Let $\alpha \in \overline{\kk(y)}$ be a root of $f-y$, $M = \kk (h(\alpha) ) $ and $M^* = \kk (h^*(\alpha))$, as in \eqref{eq:bij}.
Then $\alpha$ is a root of $h - h(\alpha)$ and by \autoref{lem:trivial}, $h - h(\alpha)$ is irreducible in $M[x]$. Thus the minimal polynomial of $\alpha$ over $M$ is $h - h(\alpha)$, and similarly the minimal polynomial of $h(\alpha)$ over $\kk(y)$ is $g - y$. Hence  $[\kk(\alpha) \colon M ] = \deg h$ and $[M \colon \kk(y) ] = \deg g$.
\autoref{fig:fields} illustrates the relation between these field extensions and their respective minimal polynomials.

\begin{figure}[h!]
\centering
\begin{tikzpicture}
\begin{scope}[
xscale =4.5,
yscale =3
]
    \tikzstyle{every node} = [rectangle]
    \node (M) at (-.5,0) {$M=\kk(h(\alpha))$}
     node (M*) at (.5,0) {${M^*} = \kk(h^{*}(\alpha))$}
     node (Ka) at (0,1) {$\kk(\alpha)$}
     node (Ky) at (0,-1) {$\kk(y)$};
    \draw (M) -- (Ka) node[pos=.5,left] {$h-h(\alpha)$};
    \draw (M*) -- (Ka) node[pos=.5,right] {$h^{*}-h^{*}(\alpha)$};
    \draw (Ky) -- (M) node[pos=.5,left] {$g-y$};
    \draw (Ky) -- (M*) node[pos=.5,right] {$g^{*}-y$};
\end{scope}
\end{tikzpicture}
\caption{Lattice of subfields}
\label{fig:fields}
\end{figure}

By \autoref{thm:bij} and since $M {M^*} \subseteq K(\alpha)$, there is a monic original $v \in \kk[x]$ such that $M {M^*} = \kk(v(\alpha))$. Since $M \subseteq M {M^*}$, there is $u \in \kk[x]$ such that $h= u \circ v$, by applying \autoref{thm:bij} to $K(\alpha) \mid M$. Similarly, there is $u^* \in \kk[x]$ such that $h^* = u^* \circ v$. Hence $v = x$, by \ref{assum:i}, and $M {M^*} = \kk (\alpha)$. Moreover, $M M^*$ is contained in $M \otimes_{K(y)} M^*$ as a direct summand; see \citet[Theorem 21 in Chapter I]{jac64}. Their $K(y)$-dimensions both equal $\deg f = \deg g \cdot \deg h = (\deg g)^2$, by \ref{assum:-1}. Thus $M\otimes_{K(y)} M^* \cong M M^* = K(\alpha)$.
Let $P_c$ be as in \eqref{eq:placeroot1}. Since, by \autoref{fact:placeroot}, the root multiplicities of $g - c$ are the ramification indices of the places over $P_c$ in $M$, \ref{assum:ii} rules out finite wildly ramified places in $M \mid \kk (y)$.
Thus we can apply \autoref{lem:Dorey}, as follows.

\begin{figure}[h!]
\begin{tikzpicture}
\begin{scope}[
rotate=45,
scale=0.7,
map/.style={thick, ->, >=stealth'},
]

\draw (0,0) rectangle (1,1) node[pos=0.5] {$c$};

\fill[fill=red!50] (1,4) rectangle ++ (-1,3);
\fill[fill=red!20] (2,4) rectangle ++ (7,3);
\draw (0,2) rectangle ++(1,2) rectangle ++(-1,3) node[pos=0.5] {$a$}
rectangle ++(1,2);

\fill[fill=blue!50] (4,1) rectangle ++(2,-1);
\fill[fill=blue!20] (4,2) rectangle ++(2,7);
\fill[fill=purple!30] (4,4) rectangle ++(2,3);
\draw (2,0) rectangle ++(2,1) rectangle ++(2,-1) node[pos=0.5]
{$a^{*}$} rectangle ++(3,1);

\draw (2,2) rectangle ++(2,2) rectangle ++(2,-2) rectangle ++(3,2);
\draw (2,7) rectangle ++(2,-3) rectangle ++(2,3)
node[pos=0.8] {$b$} rectangle ++(3,-3);
\draw (2,7) rectangle ++(2,2) rectangle ++(2,-2) rectangle ++(3,2);

\draw[map] (0.5,2) -- (0.5,1) node[pos=0.7,above] {$g$};
\node at (0.5,10) {$g^{-1}(c)$};
\draw[map] (2,0.5) -- (1,0.5) node[pos=0.7,above] {$g^{*}$};
\node at (10,0.5) {${g^{*}}^{-1}(c)$};

\draw[thick,decorate,decoration={brace,raise=0.2cm}] (0,4) -- ++ (0,3)
node[pos=0.5,xshift=-1.6cm,below=0.2cm] {$m = \mult_{a}(g-c)$};
\draw[map] (2,5.5) -- (1,5.5) node[pos=0.7,above] {$h$};
\node at (10.1,5.5) {$h^{-1}(a)$};

\draw[map] (5,2) -- (5,1) node[pos=0.7,right=0.1cm,above=0.1cm] {$h^{*}$};
\draw[thick,decorate,decoration={brace,raise=0.2cm}] (6,0) -- ++
(-2,0) node[pos=0.5,xshift=1.8cm,below=0.1cm] {$m^{*} = \mult_{a^{*}}(g^{*}-c)$};
\node at (5,10.4) {${h^{*}}^{-1}(a^{*})$};

\node at (9.5,9.5) {$f^{-1}(c)$};
\end{scope}
\end{tikzpicture}
\caption{Roots and multiplicities}
\label{fig:multiplicities}
\end{figure}

Let $m = \mult_a (g-c)$ and $m^* = \mult_{a^*} (g^*-c)$, see \autoref{fig:multiplicities}. By \autoref{fact:placeroot}, there are finite places $\prip_a$ and $\prip_{a^*}^*$ over $P_c$ in $M$ and $M^*$, respectively, with $m = e(\prip_a \mid P_c) $ and $m^*= e(\prip_{a^*}^* \mid P_c)$. Then, by \autoref{lem:Dorey}, there are $\gcd(m, m^*)$ places $\priP$ over $\prip_a$ and $\prip_{a^*}^*$ in $K(\alpha)$.
By the bijection \eqref{eq:placeroot1}, for each such place $\priP$ there is $b \in K$ such that $\priP = \priP_b$, and by applying \autoref{fact:placeroot} to $K(\alpha) \mid M$ and to $K(\alpha) \mid M^*$, we find $b \in \Van{h}{a} \cap \Van{{h^{*}}}{a^{*}} \subseteq \Van{f}{c}$.
On the other hand, for $b \in \Van{h}{a} \cap \Van{{h^{*}}}{a^{*}}$, the place $\priP_b$ lies over $\prip_a$ and $\prip_{a^*}^*$. Thus $\# \Van{h}{a} \cap \Van{{h^{*}}}{a^{*}} = \gcd(m,m^*)$ and $\mult_b (f-c) = e(\priP_b \mid P_c) = \lcm (m, m^*)$, by \autoref{lem:Dorey}.
\end{proof}

Combining \eqref{eq:50} and \eqref{eq:70}, for $b\in K$, $a= h(b)$, $a^* = h^* (b)$, and $c = f(b)$, we find
$\mult_{a}(g-c) \cdot \mult_b (h -a) = \mult_b (f-c) = \lcm \left( \mult_a (g - c), \mult_{a^*}(g^* - c) \right)$
and thus
 \begin{equation}
\label{eq:5070}
 \mult_b (h -a) = \lcm \left( \mult_a (g - c), \mult_{a^*}(g^* - c) \right) / \mult_a (g - c).
\end{equation}
Hence, the root multiplicities of $h-a$ are determined by those of $g -c$ and $g^*-c$.

From \autoref{prop:rammulti} we derive further results about the root multiplicities of  $f$, $g$, and $g^*$.
\begin{lemma}
\label{lem:c}
Let $c \in \kk$, $r$ be a power of $p$, $f \in P_{r^2}(\ff)$ have a $2$-collision $\{ (g,h),(g^*, h^*) \}$ satisfying \autoref{assum:a}, and let $a\in \Van{g}{c}$ and $e = \mult_a(g-c)$. Then the following hold.
\begin{ronumerate}
\item\label{item:c1} We have
  \begin{equation}
    \label{eq:49}
\gcd \{ \mult_{a^*}(g^*-c) \colon a^*
  \in \Van{{g^{*}}}{c} \} = 1.
  \end{equation}
In particular, if $e$ divides $\mult_{a^{*}} (g^{*} - c)$ for all roots $a^{*} \in \Van{{g^{*}}}{c}$, then $e = 1$.
\item\label{item:c2} The multiplicity $e$ either equals $1$ or divides $\mult_{a^*}(g^*-c)$ for all roots $a^* \in \Van{{g^{*}}}{c}$ but exactly one.
\end{ronumerate}
\end{lemma}
\begin{proof}
 \ref{item:c1} Let $d$
 be the $\gcd$ of all root multiplicities of $g^* - c$. Then $d$ divides
 $\sum_{a^* \in \Van{{g^{*}}}{c}} \mult_{a^*}(g^*-c) = \deg(g^*-c) = r$. Thus
 $d$ is a power of $p$ and hence all multiplicities of $g^*-c$ are
 divisible by $p$ if $d >1$, which contradicts \ref{assum:ii}, and
 \ref{item:c1} follows.

Before we start with the proof of \ref{item:c2}, we introduce some
notation and results for arbitrary $c \in K$, $a \in g^{-1}(c)$, and
$a^{*}\in {g^{*}}^{-1}(c)$.  We define
\begin{equation}
\label{eq:iandj}
\begin{split}
\ii (c,g) &= \smashoperator{\sum_{a\in \Van{g}{c}}} \mult_a(g'), \\
\ii (c,h^*) &= \smashoperator{\sum_{b\in \Van{f}{c}}} \mult_b({h^*}'), \\
\jj (a,a^{*}) & = \smashoperator{\sum_{b \in \Van{h}{a} \cap \Van{{h^{*}}}{a^{*}}}} \mult_b(\dif{h^*}) ,
\end{split}
\end{equation}
and have
\begin{equation}
\label{eq:anothersum}
\begin{split}
\sum_{c \in \kk} \ii (c,g) & = \deg g',\\
\sum_{c \in \kk} \ii (c,h^*) & = \deg {h^*}', \\
\smashoperator{\sum_{\substack{a \in \Van{g}{c} \\
 		a^* \in \Van{{g^{*}}}{c}}}}
 		\jj(a, a^*) & = \ii(c,h^{*}),
\end{split}
\end{equation}
since $\dot{\bigcup}_{c \in \kk} \Van{g}{c} = \kk$, $\dot{\bigcup}_{c
  \in \kk} \Van{f}{c} = \kk$, and
\begin{equation}
  \label{eq:27}
\Van{f}{c}= \dot{\, \smashoperator{\bigcup_{\substack{a \in \Van{g}{c} \\ a^* \in
    \Van{{g^{*}}}{c}}}} \,} \Van{h}{a}\,\cap\, \Van{{h^{*}}}{a^{*}}
\end{equation}
by \autoref{lem:multmalmult}.

By \ref{assum:ii}, $p \nmid \mult_a(g-c)$ and thus $\mult_a(g') = \mult_a(g-c) - 1$, by \autoref{lem:trivial2}.
Hence for $c \in \kk$ we have
\begin{equation}
\label{eq:sumdeg}
\ii (c,g) = \smashoperator{\sum_{a \in \Van{g}{c}}} ( \mult_a(g - c) - 1 ) = \deg g - \# \Van{g}{c}.
\end{equation}

Let $e=\mult_a(g - c)$ and $\et=\mult_{a^*}(g^* - c)$.
By \autoref{prop:rammulti}, the set $\Van{h}{a} \cap
\Van{{h^{*}}}{a^{*}}$ has size $\gcd(e,e^{*})$ and for a root $b \in
\Van{h}{a} \cap \Van{{h^{*}}}{a^{*}}$, we have $\mult_b(h^*-a^*) =
\mult_b(f-c) / \et = \lcm(e, \et ) / \et$, by \eqref{eq:5070}. Thus
$\mult_{b}({h^{*}}') = \lcm(e, e^{*})/e^{*} - 1$ by \ref{assum:ii} and
\autoref{lem:trivial2} and we have
\begin{equation}
  \label{eq:sumdiedum}
j(a,a^{*}) = \gcd(e, \et) \cdot (\lcm(e, \et)/ \et - 1) = e - \gcd(e, \et).
\end{equation}

We now show
\begin{equation}
  \label{eq:29}
  \smashoperator{\sum_{a^{*} \in {g^{*}}^{-1}(c)}} j(a,a^{*}) \geq e-1.
\end{equation}
Let $a_{0}^{*}, \dots, a_{\ell}^{*}$ be the roots of $g^{*}-c$ in $K$
and $e_{i}^{*} = \mult_{a^{*}_{i}}(g^{*}-c)$ be their multiplicities.
If $e$ divides all
$e_{i}^{*}$, then $e = 1$ by \ref{item:c1} and \eqref{eq:29} follows
trivially.  If $e$ divides all $e_i^*$ except exactly one, say $e
\nmid e_0^*$ and $e \mid e_i^*$ for $1 \leq i \leq \ell$, then  the
$\gcd$ of $e$ and $\et_0$ divides all $\et_i$ and hence divides
$\gcd\{ \et_i \colon 0 \leq i \leq  \ell\} =1$; see
\eqref{eq:49}. Thus $\gcd(e, \et_0)=1$, $\jj (a, a^*_0) = e-1$ by
\eqref{eq:sumdiedum}, and \eqref{eq:29} follows.

Now assume that $e$
does not divide at least two $e_{i}^{*}$, say $e \nmid
e_0^{*}$ and $e\nmid e_1^{*}$.  Then $\gcd(e, e_i^*) \neq e$, $\gcd
(e,e_{i}^{*}) \leq e/2$, and $\jj (a,a^*_i) \geq e/2$  by
\eqref{eq:sumdiedum} for $i = 0,1$.  Hence, \eqref{eq:29} holds with
strict inequality.  Summing both sides of \eqref{eq:29} over all roots of $g-c$ yields
\begin{equation}
\label{eq:asum}
 \ii (c,h^*) = \smashoperator{\sum_{\substack{a \in \Van{g}{c} \\
 		a^* \in \Van{{g^{*}}}{c}}}}
 		\jj(a, a^*) > \smashoperator{\sum_{a \in \Van{g}{c}}} (\mult_{a}(g-c)-1) = \ii (c,g)
 \end{equation}
by \eqref{eq:iandj} and \eqref{eq:sumdeg}.  With
\eqref{eq:anothersum}, this leads to
\begin{equation}
  \label{eq:26}
  \deg {h^{*}}' > \deg g',
\end{equation}
a contradiction to \ref{assum:iii}.
\end{proof}

\begin{lemma}
\label{lem:squareful}
Let $c \in \kk$, $r$ be a power of $p$, and let $f \in P_{r^2}(\ff)$ have a $2$-collision $\{ (g,h),(g^*, h^*) \}$ satisfying \autoref{assum:a}. Then the following statements are equivalent.
\begin{ronumerate}
\item\label{item:square1} $g-c$ is squareful.
\item\label{item:square2} $g^*-c$ is squareful.
\item\label{item:square3} $f-c$ is squareful.
\end{ronumerate}
Furthermore, if $g-c$ is squareful, then $g - c$ has at most one simple root.
\end{lemma}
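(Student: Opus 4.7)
My plan is to bootstrap the statement from two tools already established: the $\lcm$-formula for root multiplicities in \autoref{prop:rammulti} and the ``exactly one exception'' divisibility in \autoref{lem:c}\ref{item:c2}. A useful preliminary remark is that each of $g-c$ and $g^{*}-c$ has at least two distinct roots in $\kk$; otherwise a single root would carry the entire multiplicity $r$, which is divisible by $p$ and thus forbidden by \ref{assum:ii}.

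The implications (i)$\Rightarrow$(iii) and (ii)$\Rightarrow$(iii) I would handle uniformly: given a root $a$ of $g-c$ with $e=\mult_a(g-c)\geq 2$ and any $a^{*}\in V(g^{*}-c)$, \autoref{prop:rammulti} produces $b\in V(f-c)$ with $\mult_b(f-c)=\lcm(e,\mult_{a^{*}}(g^{*}-c))\geq 2$. Conversely, for (iii)$\Rightarrow$(i)$\vee$(ii), a multiple root $b$ of $f-c$ yields $\lcm(\mult_{h(b)}(g-c),\mult_{h^{*}(b)}(g^{*}-c))=\mult_b(f-c)\geq 2$, so one of $g-c,g^{*}-c$ is squareful. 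To close the triangle I show (i)$\Leftrightarrow$(ii): choose $a$ with $e=\mult_a(g-c)\geq 2$; by \autoref{lem:c}\ref{item:c2}, $e$ divides $\mult_{a^{*}}(g^{*}-c)$ for all but at most one $a^{*}\in V(g^{*}-c)$, and since $\#V(g^{*}-c)\geq 2$, at least one $a^{*}$ has multiplicity $\geq e\geq 2$. The reverse direction is the same argument with $g$ and $g^{*}$ interchanged.

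For the final assertion, I argue by contradiction. Suppose $g-c$ is squareful and has two distinct simple roots $a_1,a_2\in V(g-c)$. The equivalence just proved yields $a^{*}\in V(g^{*}-c)$ with $e^{*}=\mult_{a^{*}}(g^{*}-c)\geq 2$. Applying \autoref{lem:c}\ref{item:c2} with the roles of $g$ and $g^{*}$ swapped, $e^{*}$ divides $\mult_a(g-c)$ for all but exactly one $a\in V(g-c)$; but $e^{*}\geq 2$ divides neither $\mult_{a_1}(g-c)=1$ nor $\mult_{a_2}(g-c)=1$, producing two exceptions instead of one---a contradiction.

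The only slightly nonroutine step is justifying the swap of $g$ and $g^{*}$ in the final application of \autoref{lem:c}, which requires verifying that \autoref{assum:a} is preserved under $(g,h)\leftrightarrow(g^{*},h^{*})$. Conditions \ref{assum:0}, \ref{assum:-1}, \ref{assum:i}, \ref{assum:ii}, \ref{assum:iv} are manifestly symmetric; the potentially asymmetric \ref{assum:iii} also survives, because the identity $\deg f'=\deg g'\cdot r+\deg h'=\deg{g^{*}}'\cdot r+\deg{h^{*}}'$ together with $\deg h',\deg{h^{*}}'<r$ forces $\deg g'=\deg{g^{*}}'$ and $\deg h'=\deg{h^{*}}'$ by uniqueness of quotient and remainder, so $\deg{g^{*}}'=\deg h'$ follows from \ref{assum:iii}. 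This is the only delicate bookkeeping in the proof; everything else is a direct chase through \autoref{prop:rammulti} and \autoref{lem:c}.
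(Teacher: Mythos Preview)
Your proof is correct and follows essentially the same route as the paper: both use \autoref{lem:c}\ref{item:c2} (together with the swap $g\leftrightarrow g^{*}$) to establish the equivalence of \ref{item:square1} and \ref{item:square2} and the ``at most one simple root'' clause, and both use \autoref{prop:rammulti} to tie in \ref{item:square3}. Your explicit verification that \autoref{assum:a} survives the swap (via the quotient--remainder argument on $\deg f'$) is a point the paper passes over silently, and your preliminary remark that $\#V(g^{*}-c)\geq 2$ makes explicit what the paper uses tacitly; otherwise the arguments coincide.
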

\begin{proof}
 Assume that $g-c$ is squareful. Then there is a root of $g-c$ with
 multiplicity greater than $1$. This multiplicity divides all
 multiplicities of $g^* -c$ but exactly one, by
 \autoref{lem:c}~\ref{item:c2}. Hence all multiplicities of $g^* -c$ but at most one are greater than $1$. Thus $g^*-c$ is squareful and has at most one simple root.
We interchange the r\^oles of $g$ and $g^*$ in \autoref{lem:c} and obtain the equivalence of \ref{item:square1} and \ref{item:square2} and the last claim.

Now let $a\in\kk$ be a multiple root of $g-c$, and $b \in \Van{h}{a}$. Then $\mult_b(f-c) = \mult_{a}(g-c) \cdot \mult_b(h -h(b)) > 1$, by \autoref{lem:multmalmult}, and thus $f-c$ is squareful.

It is left to prove that if $f-c$ is squareful, then $g-c$ or $g^*-c$ is squareful. Let $b\in \kk$ be a multiple root of $f -c$. Then $1 < \mult_b(f-c) = \lcm (\mult_{h(b)} (g-c), \mult_{h^*(b)} (g^* -c) )$, by \autoref{prop:rammulti}. Thus $\mult_{h(b)} (g-c) > 1$ or $\mult_{h^*(b)} (g^* -c) > 1$.
 \end{proof}

\begin{lemma}
\label{lem:oneram}
Let $r$ be a power of $p$, and let $f \in P_{r^2}(\ff)$ have a $2$-collision $\{ (g,h),(g^*, h^*) \}$ satisfying \autoref{assum:a}. Then the following hold.
\begin{ronumerate}
\item\label{item:oneram3} There is at most one $c\in \kk$ such that $f-c$ is squareful.
\item\label{item:samenumber} For all $c\in \kk$, $\# \Van{g}{c} = \# \Van{{g^{*}}}{c}$.
\end{ronumerate}
\end{lemma}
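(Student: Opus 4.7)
The plan is to relate both parts to the integer $\ii(c,g)=\sum_{a\in V(g-c)}\mult_{a}(\dif{g})$. By \ref{assum:ii} and \autoref{lem:trivial2}, each summand equals $\mult_{a}(g-c)-1$, whence $\ii(c,g)=r-\#V(g-c)$, and summing over $c\in \kk$ yields $\sum_{c}\ii(c,g)=\deg \dif{g}$. Since $g$ is monic of degree $r=p^{e}$, the leading coefficient of $\dif{g}$ is $r\equiv 0\pmod{p}$, giving the crucial uniform bound $\deg \dif{g}\leq r-2$.

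For part~\ref{item:oneram3}, \autoref{lem:squareful} reduces the claim to bounding the number of $c$ with $g-c$ squareful, and the same lemma says every such $g-c$ has at most one simple root. Its multiplicities then sum to $r$ with all but at most one entry $\geq 2$, forcing $\#V(g-c)\leq(r+1)/2$, i.e., $\ii(c,g)\geq(r-1)/2$. Combined with $\sum_{c}\ii(c,g)\leq r-2$, the number of squareful $c$ is at most $2(r-2)/(r-1)<2$, hence at most one.

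For part~\ref{item:samenumber}, I would split on the dichotomy of \autoref{lem:squareful}. In the squarefree case, both $g-c$ and $g^{*}-c$ are squarefree with $r$ distinct roots each. In the squareful case, part~\ref{item:oneram3} pins down a unique such $c=c_{0}$ (for $g$ and $g^{*}$ simultaneously), so the full total concentrates there: $\ii(c_{0},g)=\deg \dif{g}$ and, symmetrically, $\ii(c_{0},g^{*})=\deg \dif{g^{*}}$. The claim then reduces to $\deg \dif{g}=\deg \dif{g^{*}}$, which I obtain by combining \ref{assum:iii} in both labellings of the symmetric 2-collision (giving $\deg \dif{g}=\deg \dif{h^{*}}$ and $\deg \dif{g^{*}}=\deg \dif{h}$) with the chain rule $\deg \dif{f}=r\deg \dif{g}+\deg \dif{h}=r\deg \dif{g^{*}}+\deg \dif{h^{*}}$, which forces $(r-1)(\deg \dif{g}-\deg \dif{g^{*}})=0$.

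The main obstacle, I expect, is making part~\ref{item:oneram3} land strictly below $2$: without exploiting that $r$ is a $p$-power to sharpen the naive $\deg \dif{g}\leq r-1$ to $\deg \dif{g}\leq r-2$, the estimate $2\deg \dif{g}/(r-1)$ would only yield ``at most two'' squareful values, which is too weak. A secondary subtlety is the tacit symmetry in \autoref{assum:a}: since the 2-collision is an unordered set, \ref{assum:iii} applies to either labelling of the decomposition pairs, which is what ultimately delivers $\deg \dif{g}=\deg \dif{g^{*}}$ needed in part~\ref{item:samenumber}.
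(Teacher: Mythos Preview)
Your proof is correct and follows essentially the same route as the paper's: both parts hinge on $\ii(c,g)=r-\#V(g-c)$, the bound $\deg \dif{g}\leq r-2$ (from $p\mid r$), and \autoref{lem:squareful}. The only noteworthy difference is that where the paper simply asserts $\deg \dif{g}=\deg \dif{g^{*}}$ from \ref{assum:iii}, you spell out the justification via the symmetric reading of \ref{assum:iii} together with the chain-rule identity $\deg \dif{f}=r\deg \dif{g}+\deg \dif{h}$; this is arguably more careful, since \ref{assum:iii} as literally stated only gives $\deg \dif{g}=\deg \dif{h^{*}}$ (your argument is in effect \autoref{prop:2degree}~\ref{item:8} restricted to the two decompositions at hand).
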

\begin{proof}
\ref{item:oneram3}  Assume $g-c$ is squareful, for some $c \in
K$. Then $g-c$ has at most one simple root, by
\autoref{lem:squareful}. Thus $r = \deg g = \sum_{a\in \Van{g}{c}}
\mult_a(g-c) \geq 1+2(\# \Van{g}{c} -1)$. Hence $\# \Van{g}{c} \leq
(r+1)/2$ and thus $\ii (c,g) = r - \# \Van{g}{c} \geq (r-1)/2$, by
\eqref{eq:sumdeg}.  Now, if there is another value
$c_{0} \in \kk \setminus \{c\}$ such that $g-c_{0}$ is also squareful, then $r-2 \geq \deg \dif{g} =
\sum_{c\in \kk} \ii (c,g) \geq r-1$, by \eqref{eq:anothersum}. By this
contradiction, there is at most one $c$ in $\kk$ such that $g-c$ is squareful. Hence there is at most one $c$ in $\kk$ such that $f-c$ is squareful, by \autoref{lem:squareful}.

\ref{item:samenumber}  If $g-c$ is squarefree, then so is $g^{*}-c$,
by \autoref{lem:squareful}, and both have exactly $\deg g = \deg g^{*}
= r$ roots.  If $g-c$ is squareful, then by \ref{item:oneram3},
$c$ is unique with this property and thus the roots of $g'$ are the
multiple roots of $g-c$ by \autoref{lem:trivial2}.  Hence
\begin{equation}
  \label{eq:64}
  \deg g' = \mult_{a \in g^{-1}(c)} \mult_{a}(g') = i(c,g) = \deg g -
  \# g^{-1}(c)
\end{equation}
by \eqref{eq:sumdeg}.  Interchanging the r\^oles of $g$ and $g^{*}$ shows $\deg {g^{*}}' = \deg g^{*} - \#
{g^{*}}^{-1}(c)$ and \autoref{cor:ass}~\ref{cor:ass4} yields $\deg g' =
\deg {g^{*}}'$, thus $\# g^{-1}(c) = \# {g^{*}}^{-1}(c)$.
\end{proof}

The previous lemmas deal with the root multiplicities over $\kk$. The next lemma shows that certain parameters are in $\ff$, when $\ff$ is assumed to be perfect.
\begin{lemma}
\label{lem:rational}
Let $\ff$ be perfect, $c \in \kk$, $r$ be a power of $p$, and $f \in P_{r^2}(\ff)$ have a $2$-collision $\{ (g,h),(g^*, h^*) \}$ satisfying \autoref{assum:a}. Then the following hold.
\begin{ronumerate}
\item\label{item:rat1} If  $f-c$ is squareful, then $c \in \ff$.
\item\label{item:rat2} If $g-c = g_1^{m_1} g_2^{m_2}$ for some monic squarefree coprime polynomials $g_1, g_2 \in \kk[x]$ and integers $m_1 \neq m_2$, then $c \in F$ and $g_1, g_2 \in \ff[x]$.
\item\label{item:rat3} If $a \in \ff$ and $h-a = h_1^{m_1} h_2^{m_2}$
  for some monic squarefree coprime polynomials $h_1, h_2 \in \kk[x]$
  and positive integers $m_1 \neq m_2$, then $h_1, h_2 \in \ff[x]$.
\end{ronumerate}
\end{lemma}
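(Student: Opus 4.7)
The plan is to use Galois descent: since $\ff$ is perfect, $\kk/\ff$ is Galois and $\ff$ equals the fixed field of $\Gamma=\Gal(\kk/\ff)$. Each $\sigma \in \Gamma$ acts coefficientwise on $\kk[x]$, fixes every polynomial in $\ff[x]$, and preserves both squarefreeness and the multiplicities of roots. All three parts will be reduced to showing that the relevant element or factor is $\Gamma$-invariant.

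For \ref{item:rat1}, I would pick an arbitrary $\sigma \in \Gamma$ and note that, since $f \in \ff[x]$, $\sigma(f-c)=f-\sigma(c)$ is again squareful. By \autoref{lem:oneram}~\ref{item:oneram3} there is at most one $c' \in \kk$ with $f-c'$ squareful, forcing $\sigma(c)=c$. As $\sigma$ was arbitrary and $\ff$ is perfect, $c \in \ff$.

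For \ref{item:rat2}, I would first secure $c \in \ff$. Assuming $g_1,g_2$ are nonconstant, $m_1 \neq m_2$ forces $\max(m_1,m_2)\geq 2$, so $g-c$ is squareful, hence $f-c$ is squareful by \autoref{lem:squareful}, and then \ref{item:rat1} gives $c\in\ff$ and $g-c \in \ff[x]$. The key observation is that the decomposition $g-c=g_1^{m_1}g_2^{m_2}$ into squarefree coprime factors with distinct exponents is uniquely determined by $g-c$, because the roots sort themselves as $V(g_i)=\{b\in\kk:\mult_b(g-c)=m_i\}$ and $g_i=\prod_{b\in V(g_i)}(x-b)$. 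Applying any $\sigma \in \Gamma$ to the identity $g-c=g_1^{m_1}g_2^{m_2}$ yields another such factorization of the same polynomial, so $\sigma(g_1)=g_1$ and $\sigma(g_2)=g_2$, whence $g_1,g_2\in\ff[x]$.

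For \ref{item:rat3}, the components $h$ of a collision lie in $\ff[x]$ by the definition of collision, so $h-a\in\ff[x]$ is already $\Gamma$-fixed; the same uniqueness of factorization as in \ref{item:rat2} then makes $h_1,h_2$ $\Gamma$-invariant and hence elements of $\ff[x]$. The only real subtlety — and thus the main obstacle — is the uniqueness claim underlying \ref{item:rat2} and \ref{item:rat3}: a polynomial in $\kk[x]$ admits a \emph{unique} representation as $u^{m_1}v^{m_2}$ with $u,v$ coprime squarefree monic and $m_1\neq m_2$. This is nothing more than squarefree decomposition grouped by multiplicity, but it is precisely the fact that lets Galois invariance of $g-c$ (resp.\ $h-a$) propagate down to its individual factors.
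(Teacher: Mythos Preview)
Your proof is correct and follows essentially the same Galois-descent approach as the paper: for \ref{item:rat1} both use the uniqueness from \autoref{lem:oneram}~\ref{item:oneram3} to force $\sigma(c)=c$; for \ref{item:rat2} both first reduce to $c\in\ff$ via \autoref{lem:squareful} and \ref{item:rat1}, then argue that $\sigma$ permutes the factors and $m_1\neq m_2$ pins each $g_i$ down; and \ref{item:rat3} is handled identically to \ref{item:rat2}. Your explicit description of the factors via $V(g_i)=\{b:\mult_b(g-c)=m_i\}$ is just a spelled-out version of the paper's one-line appeal to ``unique factorization'', and your remark that one should assume $g_1,g_2$ nonconstant is a harmless caveat that the paper leaves implicit.
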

\begin{proof}
 Since $\ff$ is perfect, $\kk$ is Galois over $\ff$. An element $c \in \kk$ is fixed by all automorphisms in the Galois group $\gal (\kk \mid \ff)$ if and only if $c \in \ff$.

 \ref{item:rat1} Let $f-c$ be squareful and $\sigma \in \gal (\kk \mid \ff)$. Then $\sigma(f-c) = f - \sigma(c)$ is squareful as well. Indeed, if $f-c = (x-a)^2 u$ for some $a\in\kk$ and $u\in \kk[x]$, then $\sigma(f-c) =(x - \sigma a)^{2} \sigma(u)$. But by \autoref{lem:oneram}~\ref{item:oneram3}, $c$ is unique and thus $c = \sigma(c)$. This holds for all $\sigma \in \gal (\kk \mid \ff)$ and hence $c\in\ff$.

\ref{item:rat2} Since $m_1 \neq m_2$, $g-c$ is squareful and thus $f-c$ is squareful, by \autoref{lem:squareful}. By \ref{item:rat1}, we find $c\in \ff$. Let $\sigma \in \gal (\kk \mid \ff)$. Then $g_1^{m_1} g_2^{m_2} = g-c = \sigma(g-c) = \sigma(g_1)^{m_1} \sigma(g_2)^{m_2}$. Since $m_1 \neq m_2$, unique factorization implies that $g_i = \sigma(g_i)$ and thus $g_i \in \ff[x]$ for $i = 1,2$.

The proof of \ref{item:rat3} is analogous to that of \ref{item:rat2}.
\end{proof}

\section{Classification}
\label{sec:classification}

We use the results of the previous section to describe in
\autoref{thm:class2} the factorization of the components of 2-collisions at degree $r^{2}$ satisfying \autoref{assum:a} over a perfect field $\ff$.  All non-Frobenius collisions at degree $p^{2}$ satisfy this assumption and in \autoref{thm:normal} we provide a complete classification of 2-collisions at that degree over a perfect field. That is, the 2-collisions at degree $p^2$ are up to original shifting those of \autoref{exa:frob}, \autoref{thm:nonadd}, and \autoref{thm:constmulti}.  This yields the maximality of these collisions (\autoref{cor:constructions-are-maximal}) and an efficient algorithm to determine whether a given polynomial $f \in P_{p^{2}}(\ff)$ has a 2-collision (\autoref{algo:coll-det}).  In the next section we use this classification to count exactly the decomposable polynomials over a finite $\ff$.

Let $F$ be a perfect field and denote by $\kk = \overline{F}$ an algebraic closure of $\ff$.
\begin{definition}
\label{def:moso}  Let $r$ be a power of $p$ and $f \in P_{r^2}(\ff)$ have a $2$-collision $\{ (g,h),(g^*, h^*) \}$ satisfying \autoref{assum:a}.  We call $f$ \emph{multiply original} if there is some $c\in \kk$ such that $f-c$ has no simple roots in $K$.  Otherwise, we call $f$ \emph{simply original}.
\end{definition}

By \autoref{lem:oneram}~\ref{item:oneram3}, there is at most one $c\in\kk$ such that $f-c$ is squareful.
Since $F$ is perfect, such a $c$ lies in $\ff$ if it exists, by \autoref{lem:rational}~\ref{item:rat1}.
 Furthermore, if $f$ is multiply original, then there is some $c \in F$ such that $f-c$ is squareful. If $f$ is simply original, then either $f-c$ is squarefree for all $c\in\kk$ or there is a unique $c\in\ff$ such that $f-c$ is squareful and has a simple root.

\begin{example}
\autoref{assum:a} holds for the 2-collisions $\MultConst{a,b,m}$ of \autoref{thm:constmulti} and the $\#T$-collisions $\SimpConst{u, s, \varepsilon, m}$ of \autoref{thm:nonadd} with $\#T \geq 2$; see \autoref{ex:assumconst}.  Moreover, a polynomial $\MultConst{a,b,m}$ has no simple roots and is therefore multiply original.  When $\# T \geq 2$, then $f = \SimpConst{u, s, \varepsilon, m}$ is squareful with a simple root if $m>1$, and $f-c$ is squarefree for all $c \in \kk$ if $m=1$.
\end{example}

\autoref{thm:class2} and \autoref{thm:normal} answer the converse question, namely whether every simply original or multiply original polynomial can be obtained as $\SimpConst{u, s, \varepsilon, m}^{[w]}$ or $\MultConst{a,b,m}^{[w]}$, respectively.  We need the following graph-theoretic lemma.
\begin{lemma}
\label{lem:graph}
Let $G=(V ,E)$ be a directed bipartite graph with bipartition $V = A
\cup A^{*}$, where the outdegree of each vertex equals $\ell > 1$ and $\# A = \# A^{*} = \ell+1$. Then some vertex in $A$ is connected to all other vertices in $A$ by a path of length~2.
\end{lemma}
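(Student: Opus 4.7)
The plan is to proceed by contradiction via a pigeonhole argument on ``missing'' edges. For each $a \in A$, the out-neighborhood $S_a \subseteq A^*$ has size $\ell$, so $A^* \setminus S_a$ consists of a single element, which I denote $\bar{a}$. Similarly, for each $x \in A^*$, let $\bar{x} \in A$ be the unique element of $A \setminus T_x$, where $T_x$ is the out-neighborhood of $x$.

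The first step is to translate the path-of-length-$2$ condition into these complements: a directed path $a \to x \to a'$ exists if and only if $x \neq \bar{a}$ and $\bar{x} \neq a'$. Consequently, for distinct $a, a' \in A$, there is \emph{no} such path from $a$ to $a'$ if and only if every $x \in A^* \setminus \{\bar{a}\}$ satisfies $\bar{x} = a'$. In particular, this forces at least $\ell$ elements of $A^*$ to be mapped to $a'$ under $x \mapsto \bar{x}$.

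The crux of the argument is then a simple pigeonhole count: if two distinct elements $a_1', a_2' \in A$ were both unreachable targets from some (possibly different) starting vertices in $A$, then the disjoint preimages $\{x \in A^{*} \colon \bar{x} = a_i'\}$ for $i = 1, 2$ would together contain at least $2\ell$ elements of $A^*$, exceeding $|A^*| = \ell + 1$ because $\ell > 1$. Hence all unreachable targets across all starting vertices coincide in a single element $a^* \in A$. Assuming for contradiction that no vertex of $A$ reaches every other vertex of $A$ by a path of length~$2$, every $a \in A$ has at least one unreachable target, and all such targets equal $a^*$. Taking $a = a^*$ then makes $a^*$ its own unreachable target, contradicting the fact that an unreachable target must differ from the starting vertex.

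The step I expect to require the most care is the initial reformulation: recognizing that passing to the complementary assignments $a \mapsto \bar{a}$ and $x \mapsto \bar{x}$ turns ``length-$2$ reachability'' into a statement about single-valued functions, which is what makes the pigeonhole inequality $2\ell > \ell + 1$ apply cleanly. Once that viewpoint is adopted, the rest of the argument is a few lines.
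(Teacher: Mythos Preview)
Your argument is correct. It rests on the same key observation the paper uses---that outdegree $\ell$ into a set of size $\ell+1$ means each vertex has exactly one \emph{missing} out-neighbour, so the complement assignments $a\mapsto\bar a$ and $x\mapsto\bar x$ are single-valued---but the packaging differs. The paper encodes the graph by adjacency matrices $M$ and $M^{*}$ and makes a direct case split: if every column of $M^{*}$ has at least two $1$'s, then $MM^{*}$ is entrywise positive and every vertex works; otherwise some column $j$ has at most one $1$, and then vertex $j$ is shown (constructively) to reach every other vertex. Your version proceeds by contradiction and pigeonhole on the fibres of $x\mapsto\bar x$, avoiding the explicit case distinction. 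The two routes identify the same special vertex (your $a^{*}$ is exactly the paper's column $j$), so the difference is stylistic rather than substantive: the paper is a touch more constructive, your formulation a touch more uniform.
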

\begin{proof}
Let $A=\{0,\dots,\ell\}$, $A^{*} = \{\ell+1, \dots, 2\ell+1\}$, and
$M$ the $(2\ell+2)\times(2\ell+2)$ adjacency matrix of $G$ having for each edge from $i \in A
\cup A^{*}$ to
$j \in A \cup A^{*}$ the entry 1 at position $(i,j)$ and entries 0
everywhere else.  Since $G$ is bipartite, we have
\begin{equation}
  \label{eq:20}
  M = \begin{pmatrix}
0 & N \\
N^{*} & 0
\end{pmatrix},
\end{equation}
where $N$ and $N^{*}$ are $(\ell+1)\times(\ell+1)$-matrices satisfying
the following properties by the assumptions of the lemma.
\begin{ronumerate}
\item\label{it:i} Each row in $N$ has exactly one entry 0 and all
  other entries 1.
\item\label{it:ii} Exactly $\ell + 1$ entries of $N^{*}$ are 0 and all
  other entries are 1.
\end{ronumerate}
The number of paths of length 2 that connect a vertex $i \in A$ to a
vertex $j \in A$ is given by the entry $(i,j)$ of
\begin{equation}
  \label{eq:25}
  M^{2} = \begin{pmatrix}
N\cdot N^{*} & 0 \\
0 & N^{*}\cdot N
\end{pmatrix}.
\end{equation}
If every column of $N^{*}$ contains at least two 1's, then $N \cdot
N^{*}$ has only positive entries, because of \ref{it:i}, and every
vertex in $A$ is connected to all other vertices in $A$ by a path of
length 2.  Otherwise, $N^{*}$ has a column $j$ that contains at most one 1.
Because of \ref{it:ii}, every different column of $N^{*}$ contains at
most one 0.  Because of $\ell > 1$ and \ref{it:i}, all entries at
$(j,j')$ with $j' \neq j$ in $N \cdot N^{*}$ are positive.  Starting
from $j$ we can reach all other vertices by a path of length 2.
\end{proof}
Thanks go to Rolf Klein and an anonymous referee for this proof, much simpler than our original one.

\begin{proposition}
\label{thm:class2}
Let $r$ be a power of the characteristic $p > 0$ of the perfect field
$\ff$ and let $f \in P_{r^2}(\ff)$ have a $2$-collision $\{ (g,h),(g^*, h^*) \}$ satisfying \autoref{assum:a}.  Then exactly one of the following holds.
\begin{itemize}
\item[\textnormal{(s)}] \namedlabel{s}{(s)} The polynomial $f$ is simply original.
Let $m = (r-1)/(r-1-\deg g')$. Then there are $w\in \ff$ and unique monic squarefree polynomials $\hat{f}$, $\hat{g}$, $\hat{h}$, $\hat{g}^{*}$, and $\hat{h}^{*}$ in $\ff[x]$, none of them divisible by $x$, with $\hat{f}$ of degree $(r^2-1)/m$ and the other four polynomials of degree $r-1-\deg g' = (r-1)/m$ such that
\begin{equation}
    \label{eq:71}
  \begin{split}
f^{[w]} & = x\hat{f}^{m}, \\
g^{[h(w)]} & = x \hat{g}^m, \\
h^{[w]} & = x\hat{h}^m, \\
(g^{*})^{[h^{*}(w)]} & = x (\hat{g}^{*})^m, \\
(h^{*})^{[w]} & = x(\hat{h}^{*})^m.
  \end{split}
\end{equation}
If $\deg f' > 0$, then $w$ is unique.  Otherwise, factorizations \eqref{eq:71} with the claimed properties exist for all $w \in \ff$.
\item[\textnormal{(m)}] \namedlabel{m}{(m)}
The polynomial $f$ is multiply original and there are  $a$,  $b$, and $m$ as in \autoref{thm:constmulti} and $w \in \ff$ such that
  \begin{equation}
  f^{[w]} = \MultConst{a,b,m}
  \end{equation}
and the collision $\{(g,h)^{[w]}, (g^*,h^*)^{[w]}\}$ is as in \autoref{thm:constmulti}.
\end{itemize}
\end{proposition}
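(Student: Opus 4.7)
The dichotomy between \ref{s} and \ref{m} is immediate from \autoref{def:moso}. The plan is first to extract a common derivative degree for all four components: \ref{assum:iii} applied in both the given and the swapped labelings of the collision yields $\deg g' = \deg h^{*'}$ and $\deg g^{*'} = \deg h'$, and equating the two writings $\deg f' = \deg g' \cdot r + \deg h' = \deg g^{*'} \cdot r + \deg h^{*'}$ from \eqref{eq:derivdeg} forces $d := \deg g' = \deg h' = \deg g^{*'} = \deg h^{*'}$. By \autoref{lem:oneram}~\ref{item:oneram3}, at most one $c_0 \in \kk$ makes $f - c_0$ squareful, and when $d > 0$ such a $c_0$ exists and lies in $F$ by \autoref{lem:rational}~\ref{item:rat1}.

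For case~\ref{s}, I split on whether $d = 0$. If $d = 0$, then $g'$ and $h'$ are nonzero constants, so every $g - c$ and $h - a$ is squarefree, hence $f - c$ is squarefree for every $c$, and any $w \in F$ yields \eqref{eq:71} with $m = 1$ (the five factors are squarefree and coprime to $x$ since they are shifts of squarefree polynomials divided by $x$). If $d > 0$, the simply-original hypothesis picks a simple root $b_0$ of $f - c_0$, and by \eqref{eq:70} the images $a_0 := h(b_0)$ and $a_0^* := h^*(b_0)$ are simple roots of $g - c_0$ and $g^* - c_0$ (the unique simple roots permitted by \autoref{lem:squareful}). For any multiplicity $e > 1$ of $g - c_0$, \autoref{lem:c}~\ref{item:c2} provides one non-divisor among the multiplicities of $g^* - c_0$; since $e$ cannot divide the simple multiplicity $1$, that non-divisor must be $1$, so $e$ divides every non-simple multiplicity of $g^* - c_0$. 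Symmetric reasoning then collapses every non-simple multiplicity on both sides to a common value $m$, with $p \nmid m$ by~\ref{assum:ii} and $m \geq 2$ since $d > 0$. Counting gives $k := (r-1)/m$ non-simple roots on each side and $d = (r-1)(m-1)/m$, i.e., $m = (r-1)/(r-1-d)$. Using \eqref{eq:5070}, I derive $h - a_0 = h_0^m(x - b_0)$ and $h^* - a_0^* = (h_0^*)^m(x - b_0)$ with $h_0, h_0^*$ squarefree of degree $(r-1)/m$, and $h - a$ squarefree for every non-simple $a \in V(g - c_0)$; consequently $f - c_0 = (x - b_0)\hat{f}_0^m$ with $\hat{f}_0 = h_0 \prod_{a \neq a_0}(h - a)$ squarefree of degree $(r^2 - 1)/m$. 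The shift $w = b_0$ then produces \eqref{eq:71}, rationality of the parameters and of the five squarefree factors follows from \autoref{lem:rational}~\ref{item:rat1}--\ref{item:rat3}, and uniqueness of $w$ is uniqueness of the simple root $b_0$ of $f - c_0$.

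For case~\ref{m}, I fix $c_0 \in \kk$ with $f - c_0$ having no simple root, so $g - c_0$ and $g^* - c_0$ are squareful by \autoref{lem:squareful}. First, I show neither has a simple root: by \eqref{eq:5070}, a simple root of $h - a$ at a simple $a \in V(g - c_0)$ exists precisely when $g^* - c_0$ has a simple root, so at most one of $g - c_0, g^* - c_0$ has one in case~\ref{m}. If $g - c_0$ had a simple root while $g^* - c_0$ did not, listing the non-simple multiplicities $e_1, \dots, e_k$ and $e_0^*, \dots, e_k^*$ (equal in cardinality by \autoref{lem:oneram}~\ref{item:samenumber}), \autoref{lem:c}~\ref{item:c2} forces $e_j^* \mid D := \gcd_i e_i$ for every $j$, and a size comparison (from the same lemma applied symmetrically) forces $e_i = D$ for every $i$; then $\sum e_j^* = r$ and $\sum e_i = r - 1$ yield $e_0^* = 1$, contradicting $e_0^* > 1$. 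Next, setting $\ell + 1 := \#V(g - c_0) = \#V(g^* - c_0)$ and forming the bipartite digraph on $V(g - c_0) \cup V(g^* - c_0)$ with $a \to a^*$ whenever $\mult_a(g - c_0) \mid \mult_{a^*}(g^* - c_0)$ and $a^* \to a$ whenever $\mult_{a^*}(g^* - c_0) \mid \mult_a(g - c_0)$, \autoref{lem:c}~\ref{item:c2} makes every outdegree equal $\ell$. If $\ell > 1$, \autoref{lem:graph} furnishes $a \in V(g - c_0)$ such that for every other $a' \in V(g - c_0)$ some length-two path $a \to a^* \to a'$ witnesses $\mult_a(g - c_0) \mid \mult_{a'}(g - c_0)$, and then $\mult_a(g - c_0) \mid \gcd_{a'} \mult_{a'}(g - c_0) = 1$ by \eqref{eq:49} contradicts the no-simple-root conclusion; so $\ell = 1$. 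Thus $g - c_0 = (x - a_1)^m(x - a_2)^{m^*}$ with $m + m^* = r$ and $\gcd(m, m^*) = 1$ by \eqref{eq:49}, and analogously for $g^* - c_0$ with multiplicities matched by \autoref{lem:c}~\ref{item:c2}. A direct computation via \eqref{eq:70} and \eqref{eq:5070} shows $f - c_0$ has exactly two roots of multiplicity $mm^*$; shifting by $w$ equal to one of these places it at $0$ and yields $f^{(w)} = \MultConst{a, b, m}$ with $a, b \in F$ rational by \autoref{lem:rational}~\ref{item:rat2},\ref{item:rat3}, and \autoref{lem:gunique} identifies the components of $\{(g, h)^{(w)}, (g^*, h^*)^{(w)}\}$ with those in \autoref{thm:constmulti}. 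The hardest step is the reduction to $\ell = 1$: arithmetic on multiplicities alone cannot exclude ``cyclic'' divisibility patterns involving three or more multi-roots, and \autoref{lem:graph} is precisely the combinatorial input needed to complete the argument.
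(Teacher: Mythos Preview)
Your argument tracks the paper's proof closely: the same use of \autoref{lem:oneram}, \autoref{lem:squareful}, \autoref{lem:c}, and \autoref{prop:rammulti} in case~\ref{s}, and the same bipartite-graph reduction to $\ell=1$ via \autoref{lem:graph} in case~\ref{m}. Your exclusion of simple roots in $g-c_0$ and $g^*-c_0$ in case~\ref{m} takes a slightly different route than the paper (which argues $e_0^*\mid e_1$ directly and then $e_0^*\mid\gcd_j e_j^*=1$), but your version via $D=\gcd_{i\ge 1} e_i$ also works once you spell out the compressed step: from $e_j^*\mid D\mid e_i$ and $e_i\mid e_j^*$ for all but one $j$ you get $e_i=e_j^*$ for those $j$, hence $e_i=D$, and then the degree sum $\sum_j e_j^*=r=1+kD$ forces the remaining $e_j^*$ to equal $1$.

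The one genuine gap is your final sentence in case~\ref{m}. Knowing that $g^{(h(w))}=x^m(x-a)^{m^*}$ and the multiplicity pattern of $h^{(w)}$, $h^{(w)}-a$ does not yet give $f^{(w)}=\MultConst{a,b,m}$; you must actually solve for the monic factors $H$, $H^*$ in $h^{(w)}=x^{m^*}H$, $h^{(w)}-a=(x-b)^m H^*$ and the companion pair for $h^{*(w)}$. The paper does this explicitly after \eqref{eq:77}: eliminating $H^*$ from $x^{m^*}H-a=(x-b)^m H^*$ and $x^m H^*-a^*=(x-b)^{m^*}H$ yields $H=ab^{-r}x^m+a^*b^{-r}(x-b)^m$, and monicity of $H$ is what forces $a^*=b^r-a$. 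Without this relation you do not land in the family of \autoref{thm:constmulti}. Your appeal to \autoref{lem:gunique} is misplaced here: that lemma recovers $g$ from $f$ and $h$, but you have not yet pinned down $h^{(w)}$ beyond its root multiplicities.
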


\begin{proof}
Every polynomial satisfying the assumption of the proposition is either simply original or multiply original by \autoref{def:moso}.  So, at most one of the two statements holds and it remains to exhibit the claimed parameters in each case. We begin with two general observations.
\begin{ronumerate}
\item\label{item:3} If $\deg f'=0$,
then $f-c$ is squarefree for all $c \in \kk$, by \autoref{lem:trivial2}. Thus $f$ is simply original. Moreover, $f^{[w]}$ has derivative $(f^{[w]})' = f' \circ (x+w) = f' \in \ff^{\times}$ for all $w \in \ff$ and is therefore squarefree.
\item\label{item:7} If $\deg f'>0$, then there is some $c \in \kk$ such that $f-c$ has a multiple root by \autoref{lem:trivial2}. Moreover, $c$ is unique by \autoref{lem:oneram}~\ref{item:oneram3}, and in $\ff$ by \autoref{lem:rational}~\ref{item:rat1}.  Let $\# \Van{g}{c} = \ell + 1$ be the number of distinct roots of $g-c$ in $\kk$.  By \autoref{lem:oneram}~\ref{item:samenumber}, $g^{*}-c$ also has $\ell + 1$ roots in $\kk$ and
\begin{equation}
  \label{eq:78}
\ell = r - 1 - \deg g' \geq 1,
\end{equation}
by \eqref{eq:64}.  Let $a_{0}, \dots, a_{\ell}$ and $a_{0}^{*}, \dots, a_{\ell}^{*}$ be the distinct roots of $g -c $ and $g^* - c$, respectively, and let  $e_i = \mult_{a_i}(g - c)$ and $\et_i = \mult_{a_i^*} (g^* - c)$ be their multiplicities, that is,
\begin{equation}
\label{eq:66}
  g -c  = \prod_{0 \leq i \leq \ell} (x-a_{i})^{e_{i}}, \quad g^{*} -c  = \prod_{0 \leq i \leq \ell} (x-a^{*}_{i})^{e^{*}_{i}}.
\end{equation}
By \autoref{prop:rammulti}, for each $i$ and $j$ the set $B_{i,j} = \Van{h}{a_{i}} \cap \Van{{h^{*}}}{a_{j}^{*}} \subseteq K$ has cardinality $\gcd(e_{i}, e_{j}^{*})$.
\end{ronumerate}
We now deal with the two cases of the theorem separately.

Case \ref{s}: Let $f$ be simply original.  First, if $\deg f'=0$, then $\deg g' = 0$, by \eqref{eq:derivdeg}. Hence $m=(r-1)/(r-1-\deg g')=1$ and $f^{[w]} = g^{[h(w)]} \circ h^{[w]} = (g^{*})^{[h^{*}(w)]} \circ (h^{*})^{[w]}$ is squarefree for all $w \in \ff$, by \ref{item:3}.  Thus the monic polynomials $\hat{f}=f^{[w]}/x$, $\hat{g} = g^{[h(w)]}/x$, $\hat{h} = h^{[w]}/x$, $\hat{g}^{*} = (g^{*})^{[h^{*}(w)]}/x$, and $\hat{h}^{*} = (h^{*})^{[w]}/x$ are also squarefree and not divisible by $x$, and \eqref{eq:71} holds for all $w \in \ff$.

Second, we assume $\deg f'>0$ for the rest of case \ref{s}.  By \ref{item:7}, there is a unique $c\in \ff$ such that $f-c$ has multiple roots and we assume the notation of \eqref{eq:66} for $g-c$ and $g^{*}-c$.  By the definition of simple originality, $f-c$ has a simple root, say $b_{0} \in \Van{f}{c}$. Furthermore, $g-c$ and $g^{*}-c$ also have simple roots, since
\begin{equation}
  \label{eq:65}
1 = \mult_{b_{0}} (f - c) = \lcm ( \mult_{h(b_{0})} (g - c), \mult_{h^*(b_{0})}(g^* - c))
\end{equation}
by \autoref{prop:rammulti}.  But $g-c$ and $g^{*}-c$ have at most one simple root by \autoref{lem:squareful}.  We may number the roots so that these unique simple roots are $a_{0} = h(b_{0})$ and $a_{0}^{*} = h^{*}(b_{0} )$, both with multiplicity $e_0 = \et_0 = 1$, and $e_{i}, e^{*}_{i} > 1$ for all $i \geq 1$.

By \autoref{lem:c}~\ref{item:c2} and using $e_{0}^{*} = 1$, each
$e_{i}$ with $i \geq 1$ divides all $e^{*}_{j}$ with $j \geq 1$.
Similarly, each $e_{j}^{*}$ with $j \geq 1$ divides all $e_{i}$ with
$i \geq 1$.  Thus all these multiplicities are equal to some integer
$m \geq 2$, and with $r = \deg g = 1 + \ell m$ from \eqref{eq:66}, we have $m=(r-1)/\ell
= (r-1)/(r-1-\deg g')$ by \eqref{eq:78}.  Therefore
\begin{equation}
  \label{eq:67}
g - c = (x - a_0) \tilde{g}^m, \quad g^* - c = (x-a^*_0) ({\tilde{g}}^{*})^{m}
\end{equation}
with monic squarefree polynomials $\tilde{g} = \prod_{1 \leq i \leq \ell} (x -a_{i})$ and $\tilde{g}^* = \prod_{1 \leq i \leq \ell} (x -a^*_{i}) \in \kk[x]$.  We find $a_0, a_0^* \in \ff$ and $\tilde{g}$, $\tilde{g}^* \in \ff[x]$ by \autoref{lem:rational}~\ref{item:rat2}.

Next, we show that $h-a_{0}$ and $h^{*} - a_{0}^{*}$ have the same
root multiplicities as $g^{*}-c$ and $g-c$, respectively.  For $0 \leq
i \leq \ell$, we find from \eqref{eq:5070} with the unique $b_{i} \in
B_{0,i}$ and the unique $b_{i}^* \in B_{i,0}$ as implicitly defined in \ref{item:7} that
\begin{align}
  \mult_{b_{i}}(h-a_{0}) & = \lcm(\mult_{a_{0}} (g - c), \mult_{a_{i}^{*}}(g^* - c) ) = \mult_{a_{i}^{*}}(g^* - c), \\
  \mult_{b_{i}^*} (h^{*} - a_{0}^{*}) & = \mult_{a_{i}} (g-c).
\end{align}
Since $\# B_{0,0} = 1$ by \autoref{prop:rammulti}, we have $b_{0}  = b_{0}^{*}$ and arrive at
\begin{equation}
  h - a_{0} = (x-b_{0}) \tilde{h}^{m}, \quad h^* - a^*_{0} = (x-b_{0}) (\tilde{h}^*)^{m}
\end{equation}
with monic squarefree polynomials $\tilde{h} = \prod_{1 \leq i \leq \ell} (x - b_{i})$ and $\tilde{h}^* = \prod_{1 \leq i \leq \ell} (x - b^*_{i})  \in \kk[x]$. Again, we find $b_{0} \in \ff$ and $\tilde{h}$, $\tilde{h}^* \in \ff[x]$, by \autoref{lem:rational}~\ref{item:rat3}.

Finally, we let $w=b_{0}$, $\hat{g} = \tilde{g} \circ (x + a_0)$, $\hat{h} = \tilde{h} \circ (x + b_{0})$, $\hat{g}^{*} = \tilde{g}^{*} \circ (x+a_{0}^{*})$, $\hat{h}^{*} = \tilde{h}^{*} \circ (x+b_{0})$, and $\hat{f} = \hat{h} \cdot \hat{g}(x\hat{h}^{m})$.  Then $h(b_0) = a_0$, $f(b_{0}) = g(h(b_{0})) = g(a_0) = c$, and
\begin{align}
g^{[h(w)]} & = (x - c) \circ g \circ (x + a_0) = x \hat{g}^m, \\
h^{[w]} & = (x - a_0) \circ h \circ (x + b_{0}) = x\hat{h}^m, \\
(g^{*})^{[h^{*}(w)]} & = (x - c) \circ g^* \circ (x + a^*_0) = x (\hat{g}^{*})^m, \\
(h^{*})^{[w]} & = (x - a^*_0) \circ h^* \circ (x + b_{0}) = x(\hat{h}^{*})^m
\end{align}
with squarefree monic $\hat{g}$, $\hat{g}^*$, $\hat{h}$, and $\hat{h}^*$ of degree $\ell = r - 1 - \deg g'$.  Furthermore, $\hat{g}(0) = \tilde{g}(a_{0}) = \prod_{1 \leq i \leq \ell} (a_{0}-a_{i}) \neq 0$.  This shows that $\hat{g}$ is coprime to $x$ and similar arguments work for $\hat{g}^{*}$, and for $\hat{h}$ and $\hat{h}^{*}$ with $b_{0} \neq b_{i}$ for $i \geq 1$, since $h(b_{0}) = a_{0} \neq a_{i} = h(b_{i})$ for $i\geq 1$.  Moreover, $\hat{f} = \hat{h} \cdot \prod_{1 \leq i \leq \ell} (x\hat{h}^{m} - a_{i} + a_0)$ is monic and not divisible by $x$, and $f^{[w]}  = g^{[h(w)]} \circ h^{[w]} = (x \hat{g}^{m}) \circ (x\hat{h}^{m}) = x \hat{f}^{m}$.
Since $B_{0,0} = \{b_0\}$ and $\lcm (e_0, e^*_0) = 1$, we find that $f-c$ has a simple root $b_0$, by \autoref{prop:rammulti}. Furthermore, $f-c$ has $\sum_{i+j \geq 1} \# B_{i,j} = 2\ell + \ell^{2} m = \ell(r+1)$ roots with multiplicity $m$. Thus $\hat{f}$ is squarefree and of degree $\ell(r+1) = (r^2 - 1)/m$, and the values as claimed in \ref{s} indeed exist.

For the uniqueness in the case $\deg f' >0$, we consider another factorization
$f^{[w_{0}]} = x \hat{f}_{0}^{m}$ satisfying the conditions of case
\ref{s}.  Then $f(x) - f(w) = f^{[w]} \circ (x-w) = (x-w) (\hat{f}(x-w))^{m}$
and $f(x) - f(w_{0}) = f^{[w_{0}]} \circ (x-w_{0}) = (x-w_{0}) (\hat{f_{0}}(x-w_{0}))^{m}$.  The value for $c$ such that $f-c$ is squareful with a simple root is unique for a simply original polynomial with $\deg f'>0$, as remarked in \ref{item:3}.  Thus $c = f(w) = f(w_{0})$ and $(x-w) (\hat{f}(x-w))^{m} = (x-w_{0}) (\hat{f_{0}}(x-w_{0}))^{m}$.  Since $\deg f' > 0$, we have $\deg g' >0$ and $m > 1$. Unique factorization yields $w = w_{0}$ and $\hat{f} = \hat{f_{0}}$.  An analogous argument works for $\hat{g}$, $\hat{g}^{*}$, $\hat{h}$, and $\hat{h}^{*}$.

This concludes case \ref{s}, and we continue with the case \ref{m}.

Case \ref{m}: Let $f$ be multiply original.  Then $\deg f' > 0$ by \ref{item:3} from the beginning of the proof.  By \ref{item:7}, there is a unique $c \in \ff$ such that $f -c$ is squareful, and then $f-c$ has no simple root by \autoref{def:moso} of multiple originality.  By \autoref{lem:squareful}, $g-c$ and $g^*-c$ are also squareful.

Assume that $g-c$ has a simple root.  Then $\ell > 0$ and we may
number the roots of $g-c$ such that $e_{0} = 1$ in the notation
\eqref{eq:66}.  By \autoref{lem:squareful}, $g-c$ has at most one
simple root and thus $e_{1} > 1$.  By \autoref{lem:c}~\ref{item:c2},
$e_{1}$ divides all $e_{j}^{*}$ but one and we may number the roots of
$g^{*}-c$ such that $e_{1} \mid e_{j}^{*}$ for  $1 \leq j \leq \ell$.
Interchanging the r\^oles of $g$ and $g^*$ in
\autoref{lem:c}~\ref{item:c2}, we have $e_{0}^{*} \mid e_{1}$ since
$e_{0} =1$.  Combining these divisibility conditions shows $e_{0}^{*}
\mid \gcd\{e_{j}^{*} \colon 0 \leq j \leq \ell\}$
and we find $e_{0}^{*} = 1$ from \eqref{eq:49}.
Hence
there exists some $b\in\kk$ such that $\mult_b(f-c) = \lcm(e_0,
e^*_0)=1$, by \autoref{prop:rammulti}, contradicting
\autoref{def:moso} of multiply original by the uniqueness of $c$.
Therefore $g-c$ has no simple root and $e_{i}>1$ for all $i \geq 0$.
An analogous argument for $g^{*}$ shows $e_{i}^{*}>1$ for all $i \geq 0$.

We now proceed in three steps.  First, we determine the factorizations
of $g-c$ and $g^{*}-c$.  Second, we derive the factorizations of
$h-a_{i}$ and $h^{*}-a_{i}^{*}$ for the roots $a_{i} \in \Van{g}{c}$ and
$a_{i}^{*} \in \Van{{g^{*}}}{c}$, respectively.  Third, we apply an appropriate original shift and prove the claimed form.

To compute $\ell$, we translate \autoref{prop:rammulti} into the language of graphs. We consider the directed bipartite graph on the set $V = A \cup A^{*}$ of vertices, with disjoint $A = \{i \colon 0\leq i\leq \ell\}$ and $A^* = \{i^* \colon 0\leq i\leq \ell\}$. The set $E$ of edges consists of all $(i, j^*)$ with $e_i \mid \et_j$ plus all $(i^*, j)$ with $\et_i \mid e_j$. Each vertex has outdegree $\ell$, by \autoref{lem:c} \ref{item:c2}, since no root is simple.  If $\ell > 1$, then by \autoref{lem:graph} some vertex $i$ in $A$ is connected to all other vertices in $A$.
Then $e_i > 1$ divides all other multiplicities of $g - c$, which contradicts \eqref{eq:49} with $g$ instead of $g^*$.
Hence $\ell = 1$ and therefore
\begin{equation}
  \label{eq:73}
  \begin{split}
  g - c & = (x-a_{0})^{e_{0}}(x-a_{1})^{e_{1}}, \\
  g^{*}-c & = (x-a_{0}^{*})^{e_{0}^{*}}(x-a_{1}^{*})^{e_{1}^{*}}
\end{split}
\end{equation}
with $1 < e_{i}, e_{i}^{*} < r-1$, for $i= 0,1$.  We know by
\autoref{lem:c}~\ref{item:c1} applied to $g$ and $g^{*}$,
respectively, that $\gcd(e_{0}, e_{1}) =
\gcd(e_{0}^{*}, e_{1}^{*}) = 1$ and since $e_{i},
e_{i}^{*} > 1$ for $i = 0,1$, each $e_{i}$ divides exactly one
$e_{j}^{*}$, by \ref{item:c2} of the cited lemma, and similarly each
$e_{j}^{*}$ divides exactly one $e_{i}$.  By renumbering if required,
we assume $e_{0} \mid e_{1}^{*}$.  If $e_{1}^{*} \mid e_{1}$, then
$\gcd(e_{0}, e_{1}) = e_{0} > 1$, a contradiction to
\autoref{lem:c}~\ref{item:c1}.  Therefore $e_{1}^{*} \mid e_{0}$ and
we have $e_{0} = e_{1}^{*}$.  Similar arguments show $e_{0}^{*} \mid
e_{1}$ and $e_{1} \mid e_{0}^{*}$, and hence $e_{1} = e_{0}^{*}$.  We write $m = e_{0} = e_{1}^{*}$
 and $m^* = e_{1} = e_{0}^{*}$.
Then $m$ and $m^{*}$ are coprime, $m^{*}  =  r - m$, since $r = e_0 + e_1$, and $p \nmid m$, by
\ref{assum:ii}.
\autoref{lem:rational}~\ref{item:rat2} yields distinct $a_{0}, a_{1}
\in \ff$ and distinct $a_{0}^{*}, a_{1}^{*} \in \ff$ with
\begin{equation}
  \label{eq:76}
\begin{split}
  g - c & = (x-a_{0})^{m}(x-a_{1})^{m^{*}}, \\
  g^{*}-c & = (x-a_{0}^{*})^{m^{*}}(x-a_{1}^{*})^{m}.
\end{split}
\end{equation}

For the sets $B_{i,j}$ defined in \ref{item:7}, we find $\# B_{0,0} = \# B_{1,1} =1$, $\#
B_{0,1} = m$, and $\# B_{1,0} = m^{*}$. The multiplicity of each $b_{i,j} \in B_{i,j}$ satisfies
\begin{equation}
  \label{eq:74}
  \mult_{b_{i,j}} (h-a_{i}) = \frac{\lcm(e_{i} , e_{j}^{*})}{e_{i}} = \begin{cases*}
m^{*} & if $i=j=0$, \\
m & if $i = j = 1$, \\
1 & otherwise,
\end{cases*}
\end{equation}
by \eqref{eq:5070}, and similarly
\begin{equation}
  \mult_{b_{i,j}} (h^{*}-a_{j}^{*}) =  \begin{cases*}
m & if $i=j=0$, \\
m^{*} & if $i = j = 1$, \\
1 & otherwise.
\end{cases*}
\end{equation}
Writing $B_{0,0} = \{ b_{0,0} \}$ and $B_{1,1} = \{ b_{1,1} \}$, this shows
  \begin{align}
  h - a_{0} & = (x-b_{0,0})^{m^{*}} H_{0}, & h - a_{1} = (x-b_{1,1})^{m} H_{0}^*, \\
  h^{*} - a_{0}^{*} & = (x-b_{0,0})^{m} H_{0}^*, & h - a_{1}^{*} = (x-b_{1,1})^{m^{*}} H_{0}
  \end{align}
with squarefree monic $H_{0} = \prod_{b\in B_{0,1}} (x-b)$ and $H_{0}^* = \prod_{b \in B_{1,0}} (x-b)$ that do not vanish at $b_{0,0}$ or $b_{1,1}$. \autoref{lem:rational}~\ref{item:rat3} implies that $b_{0,0}, b_{1,1} \in \ff$ and $H_{0}, H_{0}^* \in \ff[x]$.

We use this information to apply the appropriate original shift to our
decompositions.  Let $w = b_{0,0}$, $a=a_{1}-a_{0}$,
$a^{*}=a_{1}^{*}-a_{0}^{*}$, and $b = b_{1,1} - b_{0,0}$, with all
differences being different from 0, and squarefree monic $H = H_{0} \circ (x+w)$ and $H^{*} = H_{0}^* \circ (x+w)$.  Then $h (w) = a_{0}$, $h^{*} (w) = a_{0}^{*}$, $g(a_{0})=g^{*}(a_{0}^{*})=c$, and
\begin{equation}
    \label{eq:77}
  \begin{split}
	     g^{[h(w)]} & = x^{m} (x-a)^{m^{*}}, \\
	     h^{[w]} & = x^{m^{*}} H, \quad h^{[w]} - a  = (x-b)^m H^{*}, \\
    {g^{*}}^{[h^{*}(w)]} & = x^{m^{*}} (x-a^{*})^{m}, \\
    {h^{*}}^{[w]} & = x^{m} H^{*}, \quad {h^{*}}^{[w]} - a^{*}  = (x-b)^{m^{*}} H.
  \end{split}
\end{equation}
Equations \eqref{eq:77} yield a system of linear equations
\begin{align}
  x^{m^*} \qepol - (x-b)^{m} \qepol^{*} & = a, \\
 -(x-b)^{m^*} \qepol + x^{m} \qepol^{*} & = a^{*}
\end{align}
over $\ff(x)$ in $H$ and $H^*$.  We apply Cramer's rule and find
\begin{align}
  H & = (ax^{m} + a^{*}(x-b)^{m})/b^{r}, \\
  H^{*} & = (a^{*}x^{m^{*}} + a(x-b)^{m^{*}})/b^{r},
\end{align}
and $a+a^{*}=b^{r}$, since $H$ is monic.  Therefore, the polynomials
$H$ and $H^*$ are as in \eqref{eq:48} and $f^{[w]} = g^{[h(w)]} \circ h^{[w]} = x^{mm^{*}} (x-b)^{mm^{*}} H^{m} (H^{*})^{m^{*}} = \MultConst{a,b,m}$, as in \autoref{thm:constmulti}.
\end{proof}

For $2$-collisions at degree $p^{2}$, we can refine the classification
of \autoref{thm:class2}.

\begin{theorem}
\label{thm:normal}
Let $F$ be a perfect field of characteristic $p$ and $f \in P_{p^{2}}(\ff)$. Then $f$ has a $2$-collision $\{(g,h), (g^*, h^*)\}$ if and only if exactly one of the following holds.
  \begin{itemize}
    \item[\textnormal{(F)}] \namedlabel{class:0normal}{(F)} The polynomial $f$ is a Frobenius collision as in \autoref{exa:frob}.
    \item[\textnormal{(S)}] \namedlabel{class:1normal}{(S)} The polynomial $f$ is simply original and there are  $u$, $s$,  $\varepsilon$, and  $m$ as in \autoref{thm:nonadd} and $w \in \ff$  such that
\begin{equation}
f^{[w]} = \SimpConst{u,s,\varepsilon,m}
\end{equation}
and the collision $\{(g,h)^{[w]}, (g^{*},h^{*})^{[w]}\}$ is contained in the $\# T$-collision described in \autoref{thm:nonadd}, with $\# T \geq 2$.
  \item[\textnormal{(M)}] \namedlabel{class:3normal}{(M)} The polynomial $f$ is multiply original and there are  $a$,  $b$, and $m$ as in \autoref{thm:constmulti} and $w \in \ff$ such that
  \begin{equation}
  f^{[w]} = \MultConst{a,b,m}
  \end{equation}
and the collision $\{(g,h)^{[w]}, (g^*,h^*)^{[w]}\}$ is as in \autoref{thm:constmulti}.
 \end{itemize}
\end{theorem}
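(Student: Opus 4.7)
The plan is to prove both directions of the equivalence, leveraging the results of the previous sections.

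For the forward direction, each of (F), (S), (M) produces a 2-collision by earlier work: (F) by \autoref{exa:frob} (which assumes $h \neq x^{p}$), (S) by \autoref{thm:nonadd} with $\# T \geq 2$, and (M) by \autoref{thm:constmulti}. The three cases are mutually exclusive: (F) is characterized by $f' = 0$, while $\SimpConst{u,s,\varepsilon,m}$ and $\MultConst{a,b,m}$ both have nonzero derivative (noted in \autoref{ex:assumconst}), and the simply vs.\ multiply original dichotomy of \autoref{def:moso} separates (S) from (M).

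For the reverse direction, assume $f$ has a 2-collision $\{(g,h),(g^*,h^*)\}$. If $f' = 0$, then \autoref{lem:cor:frob}~\ref{lem:frob} immediately gives case (F). Otherwise all six items of \autoref{assum:a} hold: \ref{assum:0} and \ref{assum:iv} directly, \ref{assum:-1} since both components have prime degree $p$, and \ref{assum:i}, \ref{assum:ii}, \ref{assum:iii} by \autoref{cor:ass}~\ref{cor:ass1},~\ref{cor:ass2},~\ref{cor:ass3}, respectively. Hence \autoref{thm:class2} applies: in the multiply original case we directly obtain (M), and in the simply original case the proposition yields $w \in F$, an integer $m$ with $\ell = (p-1)/m$, and monic squarefree polynomials $\hat g, \hat h \in F[x]$ of degree $\ell$, not divisible by $x$, with $g^{(h(w))} = x\hat g^m$ and $h^{(w)} = x\hat h^m$ (plus analogous identities for the starred decomposition).

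The main obstacle is to refine these factorizations to the binomial form $\hat g = x^{\ell} - \alpha$, $\hat h = x^{\ell} - \beta$ required by \autoref{thm:nonadd}. Differentiating $x\hat g^m$ gives $\hat g^{m-1}(\hat g + mx\hat g')$, and the identity $\deg g' = p - 1 - \ell = (m-1)\ell$ from \eqref{eq:78} forces $\hat g + mx\hat g'$ to be a constant. Writing $\hat g = x^{\ell} + \sum_{0 \leq j < \ell} a_j x^j$, the coefficient of $x^j$ in $\hat g + mx\hat g'$ equals $(1 + mj)a_j$; using $m\ell = p - 1$ we have $1 + mj \equiv -m(\ell - j) \pmod p$, which is nonzero for $1 \leq j < \ell$ since $p \nmid m$ and $0 < \ell - j < p$. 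Hence $a_j = 0$ for $1 \leq j < \ell$, so $\hat g = x^{\ell} + a_0$ is a binomial with $a_0 \neq 0$; analogous arguments apply to $\hat h$, $\hat g^*$, and $\hat h^*$.

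Setting $\alpha = -a_0$ and $\beta$ analogously from $\hat h$, a direct composition using $(x^{\ell} - \beta)^{p-1} = (x^{\ell p} - \beta^p)/(x^{\ell} - \beta)$ yields $f^{(w)} = x(x^{\ell(p+1)} - (\beta^p + \alpha)x^{\ell} + \alpha\beta)^m$. Comparing with $\SimpConst{u,s,\varepsilon,m}$, I take $\varepsilon = 0$, $s = 1$, $u = \alpha\beta$ when $\alpha + \beta^p = 0$, and otherwise $\varepsilon = 1$, $s = \alpha\beta/(\alpha + \beta^p)$, $u = (\alpha + \beta^p)/s^p$. A short check verifies that $t = \beta/s$ satisfies $t^{p+1} - \varepsilon u t + u = 0$, so $t \in T$ and $(g,h)^{(w)}$ has the form \eqref{eq:80}. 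The same analysis on $(g^*,h^*)^{(w)}$ produces some $t^* \in T$ distinct from $t$ (else the two decompositions would coincide by \autoref{lem:gunique}). Our 2-collision is therefore contained in the $\# T$-collision of \autoref{thm:nonadd}, establishing case (S).
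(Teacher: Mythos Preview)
Your proof is correct and follows the paper's argument almost verbatim: reduce to \autoref{thm:class2} via \autoref{cor:ass}, then in the simply original case use the degree constraint $\deg(\hat g + mx\hat g')=0$ to force $\hat g$ (and $\hat h$, $\hat g^{*}$, $\hat h^{*}$) to be binomials, and read off the parameters $u,s,\varepsilon,t$. The only step you gloss over is that the starred decomposition yields the \emph{same} $u,s,\varepsilon$; the paper makes this explicit by equating the two expressions $x\bigl(x^{\ell(p+1)}-(\beta^{p}+\alpha)x^{\ell}+\alpha\beta\bigr)^{m}$ and $x\bigl(x^{\ell(p+1)}-((\beta^{*})^{p}+\alpha^{*})x^{\ell}+\alpha^{*}\beta^{*}\bigr)^{m}$ for $f^{(w)}$ to get $\beta^{p}+\alpha=(\beta^{*})^{p}+\alpha^{*}$ and $\alpha\beta=\alpha^{*}\beta^{*}$, whence $t^{*}\in T$ as you claim.
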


\begin{proof}
By \autoref{lem:cor:frob}~\ref{lem:frob}, $f$ is a Frobenius collision if and only if $f' = 0$.

The rest of the proof deals with the case $f' \neq 0$.
\autoref{assum:a} holds by \autoref{cor:ass}, the assumptions in \autoref{def:moso} are satisfied, and $f$ is either simply original or multiply original.

For a multiply original $f$, \autoref{thm:class2} yields the claimed
parameters directly, and we now show their existence in the simply original case.

We take $w, m, \hat{g}, \hat{h}$  as in \autoref{thm:class2}~\ref{s} and have
\begin{equation}
  \label{eq:69}
  \begin{split}
    g^{[h(w)]} & = x \hat{g}^m, \\
    h^{[w]} & = x\hat{h}^m.
  \end{split}
\end{equation}
We determine the form of $\hat{g}$ and $\hat{h}$. Let $\ell = \deg \hat{g} = (p-1)/m$.
The derivative of $g^{[h(w)]}$ is $\hat{g}^{m-1} (\hat{g} + m x\hat{g}' )$, and its degree equals $\deg g' = p-1-\ell$, by \eqref{eq:78}.
Thus $\deg g' = (m-1)\ell + \deg(\hat{g} + m x\hat{g}') =  \deg g' + \deg(\hat{g} + m x\hat{g}')$ and $\deg(\hat{g} + m x\hat{g}')=0$.
We write $\hat{g} = \sum_{0 \leq i \leq \ell} \hat{g}_{i}x^{i}$
with $\hat{g}_{i} \in \ff$ for all $i \geq 0$.
Then $\hat{g} + m x\hat{g}'  = \sum_{0 \leq i \leq \ell} (1 + m i )
\hat{g}_i x^i$ and we have $\hat{g}_{0} \neq 0$ and $ (1 + m i)
\hat{g}_i = 0$ for all $i\geq 1$.  Since $1 +mi \neq 0$ in
$\ff$ for $ 1 \leq i < \ell$,  it follows that $\hat{g}_i = 0$ for these
values of $i$. Thus we get $\hat{g} = x^\ell - \hat{g}_0$ and
$\hat{g}_{0} \neq 0$. An analogous argument yields $\hat{h} = x^\ell -
\hat{h}_0$ with $\hat{h}_{0} \neq 0$. Therefore, we find
\begin{equation}
\label{eq:fsubaddcomp}
f^{[w]} = x(x^{\ell(p+1)} - (\hat{h}_0^p +\hat{g}_0)x^\ell +\hat{g}_0 \hat{h}_0)^m.
\end{equation}
Let
\begin{equation}
  \label{eq:82}
  (u, s, \varepsilon, t) = \begin{cases*}
(\hat{g}_{0}\hat{h}_{0}, 1, 0, \hat{h}_{0}) & if $\hat{h}_0^p +\hat{g}_0 = 0$, \\
((\hat{h}_0^p + \hat{g}_0)^{p+1}/(\hat{g}_0 \hat{h}_0)^p, \hat{g}_0 \hat{h}_0 / (\hat{h}_0^p + \hat{g}_0), 1, \hat{h}_0/s) & otherwise.
  \end{cases*}
\end{equation}
In both cases, $u$, $s$, and $t$ are in $\ff^\times$ and the equations $t^{p+1} -\varepsilon u t + u =0$, $\hat{h}_0=st$, $\hat{g}_0 = us^p t^{-1}$, and $f^{[w]} = g^{[h(w)]} \circ h^{[w]} = \SimpConst{u, s,\varepsilon, m}$ hold.
Similarly, we find ${g^*}^{[h^*(w)]} = x(x^\ell - \hat{g}^*_0)^m$ and ${h^*}^{[w]} = x(x^\ell - \hat{h}^*_0)^m$ for some $\hat{g}_0^*$, $\hat{h}_0^* \in \ff^\times$, and derive the parameters $u^*$, $s^*$, $\varepsilon^*$, and $t^*$ analogously. Since $f^{[w]} = {g^*}^{[h^*(w)]} \circ {h^*}^{[w]}$, it follows from \eqref{eq:fsubaddcomp} that ${\hat{h}}_0^p +\hat{g}_0 = ({\hat{h}_0}^*)^p +\hat{g}^*_0$ and $\hat{g}_0 \hat{h}_0 = \hat{g}^*_0 \hat{h}^*_0$. Hence $\varepsilon  = \varepsilon^*$, $u = u^*$, and $s = s^*$. Since the decompositions are distinct, we have $t \neq t^*$ and thus $(g,h)^{[w]}$ and $(g^*, h^*)^{[w]}$ are both of the form \eqref{eq:80} with different values for $t$.
\end{proof}

\begin{corollary}
\label{cor:constructions-are-maximal}
\begin{ronumerate}
\item \label{item:max1} A polynomial in case \ref{class:1normal} of \autoref{thm:normal} has a maximal $\# T$-collision with $T$ as in \eqref{eq:7}.
\item \label{item:max2} A polynomial in case \ref{class:3normal} of \autoref{thm:normal} has a maximal $2$-collision.
\end{ronumerate}
\end{corollary}

\begin{proof} For a polynomial $f$ with collision $C$ and $w \in F$, we write
  $C^{[w]} = \{(g,h)^{[w]} \colon (g,h) \in C\}$ for the corresponding
  collision of $f^{[w]}$.

If $f$ is a Frobenius collision as in case
  \ref{class:0normal} of \autoref{thm:normal}, then $f$ is maximal by
  \autoref{lem:cor:frob}~\ref{cor:frob}.  Now let $f$ be a polynomial
  with a $2$-collision $C = \{(g,h),
  (g^{*},h^{*})\}$ that does not fall into case \ref{class:0normal} of
  \autoref{thm:normal}.

\ref{item:max1} If $f$ falls into case \ref{class:1normal} of
  \autoref{thm:normal}, we have by that theorem $u$, $s$, $\varepsilon$, and $m$ as in
  \autoref{thm:nonadd} and $w \in F$ such that
$f = \SimpConst{u,s,\varepsilon,m}^{[-w]}$ and $C \subseteq D(u,s,\varepsilon,m)^{[-w]}$,
where $D(u,s,\varepsilon,m)^{[-w]}$ denotes the $\# T$-collision
described in \autoref{thm:nonadd} shifted by $-w$.

Take  another
decomposition $(g_{0}, h_{0}) \neq (g,h)$ of $f$.  We apply \autoref{thm:normal} to $f$ with
$2$-collision $C_{0} = \{(g,h), (g_{0},h_{0})\}$.  Due to the mutual
exclusivity of the three cases this falls again in case
\ref{class:1normal}, and we obtain
$u_{0}$, $s_{0}$, $\varepsilon_{0}$, and $m_{0}$ as in
  \autoref{thm:nonadd}, and $w_{0}\in F$ such that
$f = \SimpConst{u_{0},s_{0},\varepsilon_{0},m_{0}}^{[-w_{0}]}$
and $C_{0} \subseteq
D(u_{0},s_{0},\varepsilon_{0},m_{0})^{[-w_{0}]}$.  Thus,
\begin{equation}
  \label{eq:23}
f^{[w_{0}]} = \SimpConst{u,s,\varepsilon,m}^{[w_{0}-w]} = \SimpConst{u_{0},s_{0},\varepsilon_{0},m_{0}}.
\end{equation}
By \autoref{lem:unique1}~\ref{item:5}, the only polynomial of the form
\eqref{eq:7} in the orbit of $\SimpConst{u,s,\varepsilon,m}$ under
original shifting is the polynomial itself.  Therefore,
\begin{equation}
  \label{eq:23b}
\SimpConst{u,s,\varepsilon,m} = \SimpConst{u_{0},s_{0},\varepsilon_{0},m_{0}}.
\end{equation}
If $m >1$, then the stabilizer of $\SimpConst{u,s,\varepsilon,m}$
under original shifting is $\{0\}$ by \autoref{lem:unique1}~\ref{item:4} and
we have $w=w_{0}$.  Otherwise, $m=1$ and $\SimpConst{u,s,\varepsilon,m}$,
$D(u,s,\epsilon,m)$, and $D(u_{0},s_{0},\epsilon_{0},m_{0})$ consist
only of additive polynomials which are invariant under
original shifting.  In that case, we can assume $w = w_{0}$ without loss of
generality.

If $\varepsilon = 1$, then
\autoref{lem:unique1}~\ref{item:ii} yields
$(u,s,\varepsilon,m) = (u_{0},s_{0},\varepsilon_{0},m_{0})$ from \eqref{eq:23b}
and therefore
$ D(u,s,\varepsilon,m)^{[-w]} = D(u_{0}, s_{0}, \varepsilon_{0},
m_{0} )^{[-w_{0}]} \ni (g_{0},h_{0})$.
Otherwise, $\varepsilon=0$ and
\autoref{lem:unique1}~\ref{item:iv} yields $(us^{p+1},\varepsilon,m) =
(u_{0}s_{0}^{p+1},\varepsilon_{0},m_{0})$ from \eqref{eq:23b}.  By
 the definition of $D(u_{0},
s_{0}, \varepsilon_{0}, m_{0} )^{[-w_{0}]}$ via \autoref{thm:nonadd}, there is some $t_{0} \in F$
satisfying $t_{0}^{p+1} = -u_{0}$ such that
\begin{align}
  g_{0}^{[h_{0}(-w_{0})]} & = x (x^{p-m_{0}} - u_{0}s_{0}^{p}t_{0}^{-1})^{m_{0}} =
  x   (x^{p-m} - us^{p}t^{-1})^{m}, \\
h_{0}^{[-w_{0}]} & = x(x^{p-m_{0}}-s_{0}t_{0})^{m_{0}} = x(x^{p-m}-st)^{m}
\end{align}
for $t=t_{0}s_{0}/s \in F$.  Since $t$ satisfies $t^{p+1} = -u$, this shows $(g_{0},h_{0}) \in
D(u,s,\varepsilon,m)^{[-w]}$.

\ref{item:max2} Let $f$ fall into case \ref{class:3normal} of
  \autoref{thm:normal} and take another decomposition $(g_{0}, h_{0}) \neq (g,h)$
  of $f$.  We apply that theorem to $f$ with $2$-collisions
  $C$ and $C_{0}= \{(g,h),
  (g_{0},h_{0})\}$ and obtain $a$, $b$, $m$ and
  $a_{0}$, $b_{0}$, $m_{0}$ as in \autoref{thm:constmulti} and $w,
  w_{0} \in F$, respectively, such that
  \begin{gather}
    f  = \MultConst{a,b,m}^{[-w]} =
    \MultConst{a_{0},b_{0},m_{0}}^{[-w_{0}]}, \label{eq:19} \\
 C  = E(a,b,m)^{[-w]}, \quad \text{and} \quad C_{0} = E(a_{0},b_{0},m_{0})^{[-w_{0}]},
  \end{gather}
where $E(a,b,m)^{[-w]}$ denotes the $2$-collision defined in
\eqref{eq:3normal} shifted by $-w$, and
$E(a_{0},b_{0},m_{0})^{[-w_{0}]}$ is analogous.  We have
\begin{equation}
  \label{eq:24}
  \MultConst{a,b,m}^{[w_{0}-w]} = \MultConst{a_{0},b_{0},m_{0}}.
\end{equation}
The only polynomials in the orbit of $\MultConst{a,b,m}$ that are
of the form \eqref{eq:3normal} are $\MultConst{a,b,m}$ itself and
$\MultConst{a,b,m}^{[b]}$ according to
\autoref{pro:multi_uniqueness}~\ref{item:2}; and by \ref{item:1}, the stabilizer of
$\MultConst{a,b,m}$ under original shifting is $\{0\}$.  Hence, $w_{0}-w=0$ or $w_{0}-w=b$.

If $w_{0} = w$, then $\MultConst{a_{0},b_{0},m_{0}}  = \MultConst{a,b,m}$ from \eqref{eq:24} and with \ref{item:0} of the cited proposition
\begin{equation}
  \label{eq:22}
(a_{0},b_{0},m_{0}) \in \{(a,b,m), (a^{*},b,m^{*})\}.
\end{equation}
If $w_{0} = w + b$, then $\MultConst{a_{0},b_{0},m_{0}} = \MultConst{a,b,m}^{[b]} =
\MultConst{-a^{*},-b,m}$ and again with \ref{item:0}
\begin{equation}
  \label{eq:21}
(a_{0}, b_{0}, m_{0}) \in \{(-a^{*},-b,m), (-a,-b,m^{*})\}.
\end{equation}
In either case, we check directly that $E(a_{0},b_{0},m_{0})^{[-w_{0}]} = E(a,b,m)^{[-w]}$
and therefore $(g_{0},h_{0}) \in C$.
\end{proof}

In particular, the polynomials of case \ref{class:3normal} have no
$3$-collision.  We combine \autoref{thm:normal} with the algorithms of
\autoref{sec:explicit-construction} for a general test of
$2$-collisions in \autoref{algo:coll-det}.

\begin{algorithm2f}
\caption{Collision determination}
\label{algo:coll-det}
\KwIn{a polynomial $f \in P_{p^{2}}(F)$, where $p= \chara F$}
\KwOut{``\ref{class:0normal}'', ``\ref{class:1normal}'', or ``\ref{class:3normal}'' as in \autoref{thm:normal}, if $f$ has a $2$-collision, and ``no 2-collision'' otherwise}

\lIf{$f \in F[x^p] \setminus \{x^{p^{2}}\}$}{\KwRet{``\ref{class:0normal}''}}
\If{\autoref{algo:recover-simple-paras} does not return ``failure'' on input $f$, but $k, u, s, \varepsilon, m, w$}{
  \lIf{$k \geq 2$}{\KwRet{``\ref{class:1normal}''}}
}
\If{\autoref{algo:recover-multi-paras-infinite-fields} does not return ``failure'' on input $f$}{
  \KwRet{``\ref{class:3normal}''}\;
}
\KwRet{``no 2-collision''}
\end{algorithm2f}

\begin{theorem}
 \autoref{algo:coll-det}
 works correctly as specified.  If $F = \Fq$ and $n = p^{2} = \deg f$, it takes
$O(\MM(n) \log(pq))$
 field operations.
\end{theorem}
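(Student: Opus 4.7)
The plan is to derive correctness from the trichotomy in \autoref{thm:normal} together with the correctness of the two subroutines, and to bound the running time by summing the established bounds for each step.

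First I would argue correctness. \autoref{thm:normal} asserts that $f \in P_{p^2}(\ff)$ has a 2-collision if and only if exactly one of cases \ref{class:0normal}, \ref{class:1normal}, \ref{class:3normal} holds. Since $\Fq$ is perfect, $\dif{f}=0$ is equivalent to $f \in \ff[x^p]$, and by \autoref{lem:cor:frob}~\ref{lem:frob} together with the observation following it, such an $f$ is either the exceptional $x^{p^2}$ (which admits no 2-collision) or a Frobenius collision. Hence the initial test correctly identifies all polynomials in case \ref{class:0normal}. By \autoref{thm:algo1}, \autoref{algo:recover-simple-paras} fails on inputs not of the form $\SimpConst{u,s,\varepsilon,m}^{(w)}$, and otherwise returns the exact collision count $k=\# T$; the check $k\geq 2$ therefore correctly detects case \ref{class:1normal}. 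By \autoref{thm:algo2infcorrect}, \autoref{algo:recover-multi-paras-infinite-fields} succeeds precisely on inputs of the form $\MultConst{a,b,m}^{(w)}$, which by \autoref{cor:multis-are-2-max}~\ref{item:max2} always yield a maximal 2-collision. Mutual exclusivity of the three cases in \autoref{thm:normal} ensures that the order of the tests is inessential, and if none succeeds then $f$ has no 2-collision.

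Second, for the cost over $\Fq$, I would sum the contributions of the individual steps. The initial membership and inequality test inspects at most $n$ coefficients of $f$ and takes $O(n)$ field operations. \autoref{algo:recover-simple-paras} costs $O(\MM(n)\log(nq))$ field operations by \autoref{thm:algo1}, and \autoref{algo:recover-multi-paras-infinite-fields} costs $O(\MM(n)\log n + n\log q)$ field operations by \autoref{thm:algo2infcorrect}. With $n=p^2$ we have $\log(nq) = 2\log p + \log q \leq 2\log(pq)$ and $\log n \leq 2\log(pq)$; summing yields the claimed total of $O(\MM(n)\log(pq))$ field operations.

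I do not expect a genuine obstacle: the argument is a bookkeeping reduction to the correctness and cost of the previously established subroutines combined with the classification of \autoref{thm:normal}. The only point requiring care is the corner case $f=x^{p^2}$, which would pass the test $f\in\ff[x^p]$ but lacks a 2-collision; it is handled by the explicit exclusion in the first line of the algorithm.
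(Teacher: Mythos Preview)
Your proposal is correct and follows essentially the same approach as the paper, which simply invokes \autoref{thm:normal} for correctness and notes that the cost is dominated by \autoref{algo:recover-simple-paras} with $\log n$ absorbed into $\log(pq)$ via $n=p^2\leq pq$. You have merely spelled out in more detail the case analysis and the arithmetic on the cost bounds that the paper leaves implicit.
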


The correctness follows from \autoref{thm:normal}.  Its cost is dominated by that of \autoref{algo:recover-simple-paras}, where the $\log n$ factor is subsumed in $\log(pq)$ since $n=p^2$ and $pq \geq p^2$.  If $f$ is found to have a collision, then that can be returned as well, using \autoref{exa:frob} for \ref{class:0normal}.

\section{Counting at degree $p^{2}$}
\label{sec:Counting}

The classification of the composition collisions at degree $p^2$ yields the exact number of decomposable polynomials over a finite field $\Fq$.

\begin{theorem}
\label{thm:count_coll}
Let $p$ be a prime and $q$ a power of $p$. For $k \geq 1$, we write $c_{k}$ for $\# C_{p^{2},k} (\Fq)$ as in \eqref{eq:47}, $\delta$ for Kronecker's delta function, and $\divfct$ for the number of positive divisors of $p-1$.  Then the following hold.
\begin{align}
  c_{1} & = q^{2p-2} - 2q^{p-1} + 2 - \frac{(\divfct q-q+1)(q-1)(qp-q-p)}{p} \\
& \quad  - (1 - \delta_{p,2}) \frac{q(q-1)(q-2)(p-3)}{2}, \label{eq:37} \\
  c_{2} & =  q^{p-1} - 1 + \frac{(\divfct q - q +1)(q-1)^{2}(p-2)}{2(p-1)} \\
          & \quad + (1 - \delta_{p, 2})\frac{q(q-1)(q-2)(p-3)}{4}, \label{eq:38} \\
  c_{p+1} & = \frac{(\divfct q - q +1)(q-1)(q-p)}{p(p^{2}-1)},  \label{eq:39} \\
  c_{k} & = 0, \quad \text{if $k \notin \{1, 2, p+1\}$}. \label{eq:36}
\end{align}
\end{theorem}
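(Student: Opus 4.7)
The plan is to apply the classification in \autoref{thm:normal}, which partitions the polynomials $f \in P_{p^{2}}(\Fq)$ having a 2-collision into three disjoint classes: Frobenius collisions \ref{class:0normal}, simply original polynomials \ref{class:1normal}, and multiply original polynomials \ref{class:3normal}. Combined with the maximality statements \autoref{lem:cor:frob}~\ref{cor:frob} and \autoref{cor:multis-are-2-max}, this pinpoints exactly which $c_{k}$ each class contributes to. Frobenius and multiply original polynomials each form maximal 2-collisions, so they contribute only to $c_{2}$. Simply original polynomials $\SimpConst{u,s,\varepsilon,m}^{(w)}$ form maximal $\#T$-collisions, and since $\#T \in \{0,1,2,p+1\}$ (by the remark after \autoref{thm:nonadd}), they contribute to $c_{2}$ when $\#T = 2$ and to $c_{p+1}$ when $\#T = p+1$. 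This immediately yields \eqref{eq:36}.

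Next I would count each class separately over $\Fq$. The Frobenius collisions at degree $p^{2}$ are in bijection with $h \in P_{p}(\Fq) \setminus \{x^{p}\}$ via $f = h^{p}$ (using perfectness of $\Fq$), contributing $q^{p-1} - 1$ to $c_{2}$. The simply original polynomials contribute $\#C_{p^{2},k}^{(S)}(\Fq)$ as given by \autoref{fac:simply_count}, with nonzero values only at $k \in \{2, p+1\}$. For multiply original polynomials, \autoref{cor:multi_count} supplies the count $q(q-1)(q-2)(p-3)/4$ for $p \geq 3$; for $p = 2$ no such collisions exist, since no integer $m$ satisfies $1 < m < p-1$. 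The factor $(1-\delta_{p,2})$ therefore unifies both cases, as the formula already vanishes for $p=3$. Summing the three contributions gives \eqref{eq:38}, and only the simply original term contributes to $c_{p+1}$, yielding \eqref{eq:39}.

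Finally, $c_{1}$ follows from the elementary double-counting identity
\begin{equation}
q^{2p-2} = \sum_{k \geq 1} k \cdot c_{k},
\end{equation}
whose left side counts all pairs $(g,h) \in P_{p}(\Fq)^{2}$ and whose right side groups them by their common composition. Solving gives $c_{1} = q^{2p-2} - 2c_{2} - (p+1)c_{p+1}$. The main obstacle is then the algebraic simplification of the resulting expression: combining the two $(\divfct q - q + 1)$-terms over the common denominator $p(p-1)$ and applying the identity $p(q-1)(p-2) + (q-p) = (p-1)(qp - q - p)$ collapses them into $(\divfct q - q + 1)(q-1)(qp-q-p)/p$, thereby producing \eqref{eq:37}.
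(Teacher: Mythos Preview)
Your proof is correct and follows essentially the same route as the paper: partition the $k$-collisions for $k\geq 2$ into the three classes of \autoref{thm:normal}, invoke \autoref{lem:cor:frob}~\ref{cor:frob}, \autoref{fac:simply_count}, and \autoref{cor:multi_count} (together with the maximality from \autoref{cor:multis-are-2-max}) to count each class, and recover $c_{1}$ from the double-counting identity $q^{2p-2}=\sum_{k\geq 1}k\,c_{k}$. Your explicit algebraic simplification via $p(q-1)(p-2)+(q-p)=(p-1)(qp-q-p)$ is a welcome addition that the paper leaves to the reader.
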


\begin{proof}
For $k \geq 2$, we consider $C_k = C_{p^{2}, k} (\Fq)$.  \autoref{thm:normal} provides the partition
\begin{equation}
  \label{eq:40}
  C_{k} = \Cfrob{k} \,\dcup\, \Csimp{k} \,\dcup\, \Cmult{k},
\end{equation}
where the sets on the right-hand side correspond to the cases \ref{class:0normal}, \ref{class:1normal}, and \ref{class:3normal}, respectively.
\autoref{lem:cor:frob}~\ref{cor:frob}, \autoref{fac:simply_count}, and \autoref{cor:multi_count} imply that

\begin{equation}
  \label{eq:35}
  \# \Cfrob {k} = \begin{cases*}
q^{p-1} - 1 & if $k=2$, \\
0 &  if $k \geq 3$,
  \end{cases*}
\end{equation}
\begin{equation}
  \label{eq:41}
\#  \Csimp{k} = \begin{cases*}
\dfrac{(\divfct q - q +1)(q-1)^{2}(p-2)}{2(p-1)} & if $k = 2$, \\
\dfrac{(\divfct q - q +1)(q-1)(q-p)}{p(p^{2}-1)} & if $k = p+1$, \\
0 & otherwise,
\end{cases*}
\end{equation}
\begin{equation}
\# \Cmult{k} =
\begin{cases*}
  (1 - \delta_{p, 2}) \dfrac{q(q-1)(q-2)(p-3)}{4} & if $k=2$, \\
  0 & if $k \geq 3$.
\end{cases*}
\end{equation}

Summing up yields the exact formulas \eqref{eq:38}, \eqref{eq:39}, and \eqref{eq:36}. Finally, there is a total of $q^{2p-2}$ pairs $(g,h) \in P_{p}(\Fq) \times P_{p}(\Fq)$ and therefore \eqref{eq:37} follows from
\begin{equation}
  c_{1} = q^{2p-2} - \sum_{k\geq 2} k\cdot c_{k}. \tag*{\qedhere}
\end{equation}
\end{proof}

Equation \eqref{eq:missing1} now yields the counting result of this paper, namely the following exact formula for the number of decomposable polynomials of degree $p^{2}$ over $\Fq$.

\begin{theorem}
\label{cor:main}
Let $\Fq$ be a finite field of characteristic $p$, $\delta$ Kronecker's delta function, and $\divfct$ the number of positive divisors of $p-1$. Then
  \begin{equation}
    \label{eq:1}
    \begin{split}
    \# D_{p^{2}} (\Fq) & = q^{2p-2} -q^{p-1}  + 1  -  \frac{(\divfct q  -q +1)(q-1)(qp-p-2)}{2(p+1)} \\
    & \quad - (1- \delta_{p, 2}) \frac{q(q-1)(q-2)(p-3)}{4}.
    \end{split}
  \end{equation}
\end{theorem}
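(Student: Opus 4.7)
The plan is to substitute the exact values of $\# C_{p^2,k}(\Fq)$ from \autoref{thm:count_coll} into equation \eqref{eq:missing1} and then simplify algebraically. Since \autoref{thm:count_coll} tells us that $c_k = 0$ for $k \notin \{1,2,p+1\}$, the sum in \eqref{eq:missing1} collapses to
\begin{equation}
\# D_{p^2}(\Fq) = q^{2p-2} - c_2 - p \cdot c_{p+1},
\end{equation}
where the factors $k-1$ become $1$ and $p$, respectively. Substituting the closed-form expressions for $c_2$ from \eqref{eq:38} and $c_{p+1}$ from \eqref{eq:39} yields three kinds of terms: the purely numerical piece $-(q^{p-1} - 1)$ coming from Frobenius collisions, the multiply-original contribution $-(1-\delta_{p,2})\tfrac{q(q-1)(q-2)(p-3)}{4}$, and a pair of terms both bearing the factor $(\divfct q - q + 1)(q-1)$ from the simply-original count.

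The only nontrivial step is to combine the two simply-original terms. They contribute
\begin{equation}
-(\divfct q - q + 1)(q-1) \left[ \frac{(q-1)(p-2)}{2(p-1)} + \frac{q-p}{p^2-1}\right],
\end{equation}
using $p \cdot c_{p+1} = (\divfct q - q + 1)(q-1)(q-p)/(p^2-1)$ and $p^2 - 1 = (p-1)(p+1)$. Putting the bracketed expression over the common denominator $2(p-1)(p+1)$, the numerator becomes
\begin{equation}
(q-1)(p-2)(p+1) + 2(q-p) = qp^2 - qp - p^2 - p + 2 = (p-1)(qp - p - 2),
\end{equation}
where the last equality is a direct polynomial identity that factors out $p-1$. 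Cancelling this factor against the $p-1$ in the denominator gives $(qp-p-2)/(2(p+1))$, matching the shape of the term in \eqref{eq:1}.

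Assembling the pieces, we obtain exactly
\begin{equation}
\# D_{p^2}(\Fq) = q^{2p-2} - q^{p-1} + 1 - \frac{(\divfct q - q + 1)(q-1)(qp-p-2)}{2(p+1)} - (1-\delta_{p,2})\frac{q(q-1)(q-2)(p-3)}{4},
\end{equation}
which is the claimed formula. No real obstacle is expected: the classification work lies entirely in the preceding sections, so the present theorem is essentially bookkeeping, with the only point worth checking being the factorization identity $(q-1)(p-2)(p+1) + 2(q-p) = (p-1)(qp-p-2)$ used to collapse the two simply-original contributions into a single fraction.
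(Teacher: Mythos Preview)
Your proposal is correct and follows essentially the same approach as the paper: both derive $\# D_{p^{2}} = q^{2p-2} - c_{2} - p\,c_{p+1}$ from \eqref{eq:missing1} and \autoref{thm:count_coll}, then substitute. The paper omits the algebraic simplification entirely, whereas you spell out the key identity $(q-1)(p-2)(p+1) + 2(q-p) = (p-1)(qp-p-2)$, which is correct.
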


\begin{proof}
By \eqref{eq:missing1} and \autoref{thm:count_coll} we find
\begin{equation}
  \# D_{p^{2}} (\Fq) = q^{2p-2} - c_{2} -pc_{p+1},
\end{equation}
from which the claim follows.
\end{proof}

For $p=2$, this yields
\begin{equation}
\# D_{4} (\Fq) = q^{2} \cdot \frac{2+q^{-2}}{3},
\end{equation}
consistent with the result in \cite{gat12}.  Furthermore, we have
\begin{align}
\# D_{9} (\Fq) & = q^{4} \left( 1 - \frac{3}{8} ( q^{-1} + q^{-2} - q^{-3} - q^{-4})\right) && \text{for $p = 3$},\\
\# D_{p^{2}} (\Fq) & = q^{2p-2} \left( 1 - q^{-p+1} + O(q^{-2p+5+1/d}) \right) &&\text{for $q = p^{d}$ and $p \geq 5$}.
\end{align}

We have two independent parameters $p$ and $d$, and $q= p^d$. For two eventually positive functions $f, g \colon \mathbb{N}^{2} \rightarrow \mathbb{R}$, here $g \in O(f)$ means that there are constants $b$ and $c$ so that $g(p,d) \leq c \cdot f(p,d)$ for all $p$ and $d$ with $p+d \geq b$.
With the bounds on $\divfct$ mentioned after the proof of
\autoref{fac:simply_count}, we have the following asymptotics.
\begin{corollary}  Let $p \geq 5$, $d \geq 1$, and $q= p^d$. Then
\begin{align}
  c_{1} & = q^{2p-2} (1 - 2q^{-p+1} + O(q^{-2p+5+1/d})), \\
  c_{2} & = q^{p-1} (1 + O(q^{-p+4+1/d})), \\
  c_{p+1} & = (\tau - 1) q^{3-3/d} \left(1 + O(q^{-\max\{2/d,1-1/d\}})\right) \\
& = O\left(q^{3-3/d+1/(d \loglog p)} \right).
\end{align}
\end{corollary}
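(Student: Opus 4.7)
I would deduce all four estimates by expanding the exact formulas of \autoref{thm:count_coll}, isolating the dominant term, and bounding the rest with two basic tools: the trivial estimate $\tau \leq p - 1 < p = q^{1/d}$ (sufficient for $c_{1}$, $c_{2}$ and the leading coefficient of $c_{p+1}$) and the Hardy--Wright bound $\tau = O(p^{1/\log\log p}) = O(q^{1/(d\log\log p)})$ recalled just before the corollary (needed only for the second asymptotic on $c_{p+1}$).

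\paragraph{Cases $c_{1}$ and $c_{2}$.} Since $p \geq 5$, the factor $1 - \delta_{p,2}$ equals $1$. In \eqref{eq:38} I would separate $q^{p-1}$ from the remaining three summands and bound each by a constant multiple of $\tau q^{3}$ or $p q^{3}$; using $\tau, p \leq q^{1/d}$ all three are $O(q^{3+1/d})$, whence $c_{2} = q^{p-1} + O(q^{3+1/d}) = q^{p-1}\bigl(1 + O(q^{-p+4+1/d})\bigr)$. The identical argument applied to \eqref{eq:37}, after separating $q^{2p-2} - 2 q^{p-1}$, gives $c_{1} = q^{2p-2} - 2 q^{p-1} + O(q^{3+1/d})$, which is the claimed formula after factoring out $q^{2p-2}$.

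\paragraph{Case $c_{p+1}$.} Writing $\tau q - q + 1 = (\tau - 1)q + 1$ and $(q-1)(q-p) = q^{2} - (p+1)q + p$, I would expand \eqref{eq:39} as
\begin{equation*}
c_{p+1} \;=\; \frac{(\tau-1)q^{3}}{p^{3}}\cdot\frac{1 + O(p/q)}{1 - p^{-2}} \;=\; \frac{(\tau-1)q^{3}}{p^{3}}\,\bigl(1 + O(p/q) + O(p^{-2})\bigr),
\end{equation*}
where the $O(p/q)$ comes from the coefficient $-(\tau-1)(p+1) + 1$ of $q^{2}$ after division by $(\tau-1)q^{3}$ and absorbs the lower-order contributions once $p + d$ is large enough (using $\tau \geq 2$, valid for all $p \geq 3$). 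Since $p/q = q^{-(1-1/d)}$ and $p^{-2} = q^{-2/d}$, this gives the first line $c_{p+1} = (\tau - 1)q^{3-3/d}\bigl(1 + O(q^{-\max\{2/d,\,1-1/d\}})\bigr)$. For the second line I would substitute $\tau - 1 \leq \tau = O(q^{1/(d\log\log p)})$ and note that the factor $1 + O(\cdot)$ is bounded for $p + d$ large.

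\paragraph{Main obstacle.} The only real subtlety is bookkeeping in the two-parameter $O$-notation: hidden constants and the threshold $b$ must be chosen uniformly in both $p$ and $d$, and one must confirm that the above estimates survive the boundary case $d = 1$ (in which $c_{p+1}$ vanishes identically because $q - p = 0$, and the relative-error factor degenerates to a bounded $O(1)$).
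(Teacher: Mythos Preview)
Your approach is exactly what the paper intends: it gives no proof beyond the remark that $\tau = O(p^{1/\log\log p})$, so expanding the exact formulas of \autoref{thm:count_coll} and bounding the lower-order terms is the whole argument. Your treatments of $c_{1}$, $c_{2}$, and the second asymptotic for $c_{p+1}$ are correct.

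There is, however, one genuine slip in the $c_{p+1}$ paragraph. You correctly arrive at a relative error of $O(p/q) + O(p^{-2}) = O(q^{-(1-1/d)}) + O(q^{-2/d})$, but then assert this equals $O(q^{-\max\{2/d,\,1-1/d\}})$. A sum of two positive terms is controlled by the \emph{larger} summand, which is the one with the \emph{smaller} exponent; so what your computation actually yields is $O(q^{-\min\{2/d,\,1-1/d\}})$, not $\max$. Your own closing remark about $d=1$ already exposes the inconsistency: there $c_{p+1}=0$ while $(\tau-1)q^{3-3/d}=\tau-1\neq 0$, so the relative-error term must be permitted to equal $-1$; with $\max\{2,0\}=2$ the claimed bound $O(q^{-2})$ cannot accommodate this, whereas $\min\{2,0\}=0$ gives the required $O(1)$. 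The same failure occurs for any $d\geq 2$ (e.g.\ at $d=2$ the dominant error term is $p/q=q^{-1/2}$, not $q^{-1}$). In short, the $\max$ in the displayed statement appears to be a typo for $\min$; your derivation proves the $\min$ version but does not, and cannot, establish the $\max$ version as written.
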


\Citet{gat09b} %
considers the asymptotics of
\begin{equation}
  \label{eq:79}
  \nu_{q,n} = \begin{cases*}
\# D_{n}/q^{2\ell-2} & if $n = \ell^{2}$, \\
\# D_{n}/2q^{\ell + n/\ell -2} & otherwise, \\
\end{cases*}
\end{equation}
where $\ell$ is the smallest prime divisor of $n$.  It turns out that for any composite $n$, $\lim\sup_{{q\to\infty}} \,\nu_{q,n} = 1$, and that $\lim\inf_{{q\to\infty}} \,\nu_{q,n} = 1$
for many $n$. But when $\ell$ divides $n$ exactly twice, denoted as $\ell^2 \parallel n$,
determining the limes inferior was left as an open question.
If $n = \ell^{2}$, we obtain from \autoref{cor:main}
\begin{equation}
  \label{eq:28}
  \lim_{q\to\infty} \nu_{q,\ell^2} = 1
\end{equation}
for any prime $\ell > 2$.  For $n=4$, the sequence has no limit, but
oscillates between close to $\lim\inf_{q\to\infty} \,\nu_{q,4} = 2/3$ and $\lim\sup_{q\to\infty} \,\nu_{q,4} = 1$, and these are the only two accumulation points of the sequence $\nu_{q,4}$.
If $\ell^{2} \parallel n$ and $n \neq \ell^{2}$, the question of good asymptotics is still open, as it is for $\nu_{q,n}$ when $q$ is fixed and $n \to\infty$.

\section{Conclusion}
\label{sec:conclusion}

In the wild case of univariate polynomial decomposition, we present some (equal-degree) collisions in the special case where the degree is $r^2$ for a power $r$ of the characteristic $p$, and determine their number.  We give a classification of all 2-collisions at degree $p^2$ and an algorithm which determines whether a given polynomial has a 2-collision, and if so, into which class it falls. We compute the exact number of decomposable polynomials of degree $p^2$ over finite fields.
This yields tight asymptotics on $\nu_{q,n} = (\text{number of
  decomposables of degree }n)/q^{2\ell -2}$ for $q \to \infty$, when $n = \ell^2$
is the square of a prime $\ell$.

Ritt's Second Theorem covers distinct-degree collisions, even in the
wild case, see \cite{zan93}, and they can be counted exactly in most
situations; see \cite{gat10}. It would be interesting to see a similar
classification for general equal-degree collisions.

This paper only deals with decomposition of univariate polynomials.
The study of rational functions with our method remains open.

\section{Acknowledgments}
Many thanks go to Mike Zieve for useful comments and pointers to the
literature and to Rolf Klein and an anonymous referee for simplifying our proof of \autoref{lem:graph}.

This work was funded by the B-IT Foundation and the Land Nordrhein-Westfalen.

\bibliography{journals,refs,lncs}

\bibliographystyle{cc2e}

\listoftodos

\end{document}